\newtheorem{theorem}{\textbf{Theorem}}[section]
\newtheorem{remark}[theorem]{\textbf{Remark}}
\newtheorem{lemma}[theorem]{\textbf{Lemma}}
\newtheorem{corollary}[theorem]{\textbf{Corollary}}
\newtheorem{proposition}[theorem]{\textbf{Proposition}}
\newtheorem{definition}[theorem]{\textbf{Definition}}
\numberwithin{equation}{section}
\titleformat\section{}{}{0pt}{\Large\scshape\filcenter\thesection{} - }
\def\aa{\alpha}
\def\bb{\beta}
\def\dd{\delta}
\def\g{\gamma}
\def\GG{\Gamma}
\def\ll{\lambda}
\def\LL{\Lambda}
\def\th{\theta}
\def\e{\varepsilon}
\def\ee{\varepsilon}
\def\vp{\varphi}
\newcommand{\aaa}{\boldsymbol{\alpha}}
\newcommand{\qo}{{\overline q}}
\newcommand{\aao}{{\overline \aa}}
\newcommand{\cM}{{\cal M}}
\newcommand{\RR}{ \mathbb{R}}
\newcommand{\NN}{ \mathbb{N}}
\newcommand{\EE}{{\mathbb E}}
\newcommand{\TT}{{\mathbb{T}}}
\newcommand{\cL}{{\mathcal L}}
\newcommand{\cP}{{\mathcal P}}
\newcommand{\eet}{{\widetilde \ee}}
\newcommand{\qt}{{\widetilde q}}
\newcommand{\aat}{{\widetilde \aa}}
\newcommand{\mut}{{\widetilde \mu}}
\newcommand{\llt}{{\widetilde \ll}}
\newcommand{\At}{{\widetilde A}}
\newcommand{\Bt}{{\widetilde B}}
\newcommand{\Ct}{{\widetilde C}}
\newcommand{\Ht}{{\widetilde H}}
\newcommand{\Lt}{{\widetilde L}}
\newcommand{\Mt}{{\widetilde M}}
\newcommand{\nt}{{\widetilde n}}
\DeclareMathOperator{\supp}{\mathop{\rm supp}}
\newcommand{\ptt}{{\partial_{t}}}
\newcommand{\pxi}{{\partial_{x_i}}}
\newcommand{\norm}[3][]{{\left\| #2 \right\|_{#3}^{#1}}}
\newcommand{\normc}[2]{{\left\| #1 \right\|_{C^{#2}}}}
\newcommand{\norminf}[2][]{{\left\| #2 \right\|_{\infty}^{#1}}}
\newcommand{\normLp}[3][]{{\left\| #2 \right\|_{L^{#3}}^{#1}}}
\DeclareMathOperator{\divo} {div}
\newcommand{\lp}{\left(}
\newcommand{\rp}{\right)}
\newcommand{\lc}{\left\{}
\newcommand{\rc}{\right\}}
\newcommand{\lb}{\left[}
\newcommand{\rb}{\right]}
\newcommand{\labs}{\left|}
\newcommand{\rabs}{\right|}
\title{On Classical Solutions to the Mean Field Game System of Controls}
\author{Z. Kobeissi\thanks{Laboratoire Jacques-Louis Lions, Univ. Paris Diderot, Sorbonne Paris Cit\'e, UMR 7598, UPMC, CNRS, 75205, Paris, France. zkobeissi@math.univ-paris-diderot.fr}}
\begin{document}
\maketitle

\begin{abstract}
    We consider a class of mean field games
    in which the optimal strategy of  a representative  agent
    depends on  the statistical distribution of the states and controls.
    
    We prove some existence results for the
    forward-backward system of PDEs under rather natural assumptions.
    The main step of the proof consists of obtaining
    a priori estimates on the gradient of the value function by Bernstein's method.
    Uniqueness is also proved under more restrictive assumptions.

    Finally, we discuss some examples to which the
    previously mentioned existence (and possibly uniqueness) results apply.
\end{abstract}

\section{Introduction}
\label{sec:intro}
The theory of Mean Field Games (MFG for short) has been introduced
in the independent works of J.M. Lasry and P.L. Lions
\cite{MR2269875,MR2271747,MR2295621},
and of M.Y. Huang, P.E. Caines
and R.Malham{\'e} \cite{MR2352434,MR2346927}.
It aims at studying   deterministic or stochastic  differential
games (Nash equilibria) as the number of agents tends to infinity.
The agents are supposed to be
rational (given a cost to be minimized, they always choose the optimal strategies), and indistinguishable.
Furthermore, the agents interact via some empirical averages of quantities which depend on the state variable.

At the limit when $N\rightarrow+\infty$, the game may be modeled by  a system of two coupled
partial differential equations (PDEs), which is named the MFG system.
On the one hand, there is
a Fokker-Planck-Kolmogorov
equation describing the evolution
of the statistical distribution $m$ of the state variable;
this equation is a forward in time parabolic equation, and the  initial distribution at time $t=0$
is given.
On the other hand, the optimal value of a generic agent at some time $t$ and state $x$
is noted $u(t,x)$ and is defined as the lowest cost that a representative agent can achieve from time $t$ to $T$ if it is at
state $x$ at time $t$.
The value function satisfies a Hamilton-Jacobi-Bellman equation posed backward in time
with a terminal condition
involving a terminal cost.
In the present work,
we will restrict our attention to
the case when the costs and
the dynamics are periodic in the state variable,
and we will work
in the $d$-dimensional torus $\TT^d$
(as it is often done in the MFG literature for simplicity).
We will take a finite horizon time $T>0$,
and will only consider  second-order non-degenerate MFG systems.
In this case, the MFG system is often written as:
\begin{subequations}\label{eq:MFGtrad}
     \begin{empheq}{align}
            \label{eq:HJBtrad}
            &-\ptt u(t,x)
            - \nu\Delta u(t,x)
            + H(t,x, \nabla_xu(t,x))
            = f(x,m(t))
            &\text{ in }
            (0,T)\times\TT^d,
            \\
            \label{eq:FPKtrad}
            & \ptt m(t,x)
            - \nu\Delta m(t,x)
            -\divo(H_p(t,x,\nabla_xu(t,x))m)
            =0
            &\text{ in }
            (0,T)\times\TT^d,
            \\
            \label{eq:CFutrad}
            &u(T,x)
            =g(x,m(T))
            &\text{ in }
            \TT^d,
            \\
            \label{eq:CImtrad}
            &m(0,x)
            =m_0(x)
            &\text{ in }
            \TT^d.
    \end{empheq}
\end{subequations}
We refer the reader to
\cite{MR3967062}
for some theoretical results on
the convergence of the  $N$-agents Nash equilibrium
 to the solutions of the MFG system.
For a thorough study of the well-posedness
of the MFG system, see the videos
of P.L.Lions' lecture at the
Coll{\`e}ge de France,
and some lecture notes
\cite{Cardaliaguet_notes_on_MFG}.

There is also an important literature on the  probabilistic aspects of MFGs,  see
\cite{MR3072222,MR3332857} for some examples
and \cite{MR3752669,MR3753660} for a detailed presentation
of the probabilistic viewpoint.

For applications of MFGs, numerical simulations are crucial because it is most often impossible to find explicit or semi-explicit solutions to the MFG system.
We refer to \cite{MR3135339} for a survey on finite difference methods and to \cite{MR3821977} for applications to crowd motion.

Most of the literature on MFGs is focused  on the case when the
 mean field interactions only involves the distributions of states.
Here we will consider a more general situation in which
the cost of an individual agent depends on the joint distribution $\mu$
of states and optimal strategies.
To underline this, we choose to use the terminology
{\sl Mean Field Games of Controls (MFGCs)}
for this class of MFGs; the latter terminology  was introduced in \cite{MR3805247}.
Within this framework, the usual MFG system \eqref{eq:MFGtrad}
is replaced by the following MFGC system,
\begin{subequations}\label{eq:MFGC}
     \begin{empheq}{align}
            \label{eq:HJB}
            &-\ptt u(t,x)
            - \nu\Delta u(t,x)
            + H(x, \nabla_xu(t,x),\mu(t))
            = 0 
            &\text{ in }
            (0,T)\times\TT^d,\\
            \label{eq:FPK}
            & \ptt m(t,x)
            - \nu\Delta m(t,x)
            -\divo(H_p(x,\nabla_xu(t,x),\mu(t))m)
            =0
            &\text{ in }
            (0,T)\times\TT^d,\\
            \label{eq:defmu}
            &\mu(t)
            = \Bigl(
            I_d,
            -H_p\lp \cdot,\nabla_xu(t,\cdot),\mu(t)\rp
            \Bigr){\#}m(t)
            &\text{ in } [0,T],\\
            \label{eq:CFu}
            &u(T,x)
            =g(x,m(T))
            &\text{ in }
            \TT^d,\\
            \label{eq:CIm}
            &m(0,x)
            =m_0(x)
            &\text{ in }
            \TT^d.
    \end{empheq}
\end{subequations}
We would like to point out two of
the main difficulties that one may encounter when
studying \eqref{eq:MFGC} and which are not present
in the study of \eqref{eq:MFGtrad}.
\begin{enumerate}[{\bf 1)}]
    \item
        \label{pb:FPmu}
        The joint law of states and controls
        satisfies a fixed point relation
        described by \eqref{eq:defmu}.
    \item
        \label{pb:apriori}
        The HJB equation \eqref{eq:HJB}
        is non-local with respect to $\nabla_xu$.
        Consequently, it is much more difficult
        to obtain uniform a priori estimates
        on $u$ and the its derivatives.
\end{enumerate}
Difficulty \ref{pb:FPmu} is in general not
straightforward and one needs to make assumptions
for the fixed point in $\mu$ to have a unique solution
when $\lp \nabla_xu,m\rp$ are given.
An example in which this fixed point relation
does not admit any solution is given in \cite{achdou2020mean}
Remark $4.3$.

Let us provide a simple illustration 
for describing 
difficulty \ref{pb:apriori}
by comparing the results obtained
when we apply the maximum principle on parabolic equations
to \eqref{eq:HJBtrad} and \eqref{eq:HJB} respectively:
if $u$ satisfies \eqref{eq:HJBtrad} where
$f$ and $g$ are assumed to be 
uniformly bounded with respect to $m$,
then $u$ is uniformly bounded;
under the same assumption on $g$,
if $u$ is a solution to \eqref{eq:HJB}
and $H$ is not uniformly bounded with respect to $\mu$,
we can only say that $u$ is bounded in absolute value
by a constant depending on $\mu$.
The other estimates used in the usual arguments
of existence in MFG sytems suffer the same
lack of uniformity with respect to $\mu$.
Conversely, the estimates of $\mu$ depend
on $\nabla_xu$.
It is not obvious a priori how to combine
the estimates on $\mu$ and $(u,m)$ in order
to obtain uniform estimates on $u$.
Consequently,
compactness results are harder to obtain for \eqref{eq:MFGC}
than for \eqref{eq:MFGtrad}.

The main assumption of this paper,
namely \ref{hypo:Hinvert} and \ref{hypo:Hcontrac}
described below,
is an original structural assumption designed
to address difficulty \ref{pb:FPmu}.
In particular, it implies that the map
\begin{equation*}
    \mu\mapsto\mut=
    \Bigl(I_d,
    -H_p\lp \cdot,\nabla_xu(t,\cdot),\mu\rp
    \Bigr){\#}m,
\end{equation*}
is a contraction in a
convenient metric space,
when $(t,u,m)$ are given.

Moreover, we also assume that the Hamiltonian
$H(x,p,\mu)$ behaves like a power function
when $p$ tends to infinity.
See paragraph \ref{subsec:assum}
for more details.

The main objective of this work is to
discuss existence of the solutions
of the MFGC system \eqref{eq:MFGC}
within this framework.
We will also give a uniqueness result
under a short time horizon assumption.
We refer to \cite{achdou2020mean}
for a numerical application with multiple
solutions.
Indeed, uniqueness does not hold in general
for arbitrary time horizon.
It can be obtained though,
under a monotonicity assumption which is
investigated in the companion paper
\cite{MonoMFGC}.
In \cite{MonoMFGC},
existence and uniqueness
of solutions of the MFGC system
are proved under the above-mentioned
monotonicity assumption and with Hamitonian
having similar growth as in the present paper. 
This monotonicity condition implies that the agents favor
moving in a direction opposite to the mainstream.
Such an assumption
is adapted to some models coming
from finance or economy;
and may be unrealistic in several situations,
in particular in models of crowd motions.
This explains why here we introduce a new
structural assumption and refrain from
assuming monotonicity or investigating
uniqueness in the general case.

\subsection*{Related literature}
In the first articles devoted to MFGCs,
\cite{MR3112690,MR3160525}, D. Gomes and his collaborators have
given several existence results for MFGCs in various cases,
using  the terminology {\sl extended MFGs} instead of {\sl MFGCs}.
For instance, \cite{MR3160525} contains  existence results for
stationary games (infinite horizon)  under the assumption
that some of the parameters involved in the models are small.
We refer to
\cite{MR3941633,MR3805247,MR3325272,MR3752669,MonoMFGC}
for other existence and uniqueness results for MFGC systems.

Uniqueness is a major issue in MFG theory,
it has been proved
for \eqref{eq:MFGtrad}
in \cite{MR2295621,Lions_video}
under an assumptions called
the Lasry-Lions monotonicity on the coupling function $f$
and the terminal cost $g$ in the case of non-local coupling.
This assumption has been extended to MFGC
and discussed in
\cite{MR3112690,MR3752669,MonoMFGC}
in which uniqueness is proved.
It translates the fact that
the agents prefer directions opposite
to the mainstream  direction;
therefore it is not adapted to
a large class of MFGC systems
like crowd motion models
in which an agent
is more likely to go in
the mainstream direction.

The latter example of population dynamic
is the typical application
we had in mind when writing the assumptions
in the present paper,
see paragraphs
\ref{subsec:flocking}
and \ref{subsec:crowd}.
To our knowledge,
existence results for such MFGC systems
have not been discussed in the literature before.
Uniqueness should not hold in general but under
a short-time assumption.
We refer to \cite{achdou2020mean}
in which the MFGC system is discretized
using a finite-difference scheme
and simulations are provided where
the approximating discrete MFGC system
admits several different solutions.

For other applications of MFGCs
we refer to 
\cite{MR3805247} for an model of optimal trading,
\cite{2019arXiv190205461F,MR3359708,MR3755719,
MR2762362,MR4064472} in the
case of competition between firms producing the same goods,
or \cite{MR4054298} for energy storage.

\subsection*{Organization of the paper}
Section \ref{sec:not_ass}
describes the notations,
assumptions
and main results in this paper.
In Section \ref{sec:fixed_point},
we address difficulty
\ref{pb:FPmu} which consists
of inverting the fixed point
relation in $\mu$ \eqref{eq:defmu}
and providing estimates on the resulting
flow of measures.
Section \ref{sec:apriori}
is devoted to proving a priori
estimates on the solutions
to \eqref{eq:MFGC}
and addresses difficulty \ref{pb:apriori}.
Section \ref{sec:ExiUni} contains
the proofs of the main results.
Finally, we discuss several applications
in Section \ref{sec:appli}.
Namely, we study
\begin{itemize}
    \item
        the
Bertrand and Cournot competition
for exhaustible ressources
and introduce an extension
to negatively correlated ressources
(for instance gold and other raw materials);
\item
a model of price impact for high-frequency
trading by Almgren and Chriss
in which we discuss the possibility
for the bid and ask prices to be different;
\item
a first-order flocking model;
\item
    a crowd motion model.
\end{itemize}

\section{Notations and assumptions}
\label{sec:not_ass}
\subsection{Notations and definitions}
\label{subsec:not}
The spaces of probability measures 
are equipped with the weak* topology.
We denote by
$\cP_{\infty}\lp\TT^d\times\RR^d\rp$
the subset of measures $\mu$ in $\cP\lp\TT^d\times\RR^d\rp$
with a second marginal compactly supported.
For $\mu\in\cP_{\infty}\lp\RR^d\times\RR^d\rp$
and $\qt\in[1,\infty)$,
we define the quantities
$\LL_{\qt}(\mu)$ and $\LL_{\infty}(\mu)$ by,
\begin{equation}
    \label{eq:defLq}
\begin{aligned}
    \LL_{\qt}(\mu)
    &=
    \lp\int_{\RR^d\times\RR^d}
    \labs\aa\rabs^{\qt}d\mu\lp x,\alpha\rp\rp^{\frac1{\qt}},
    \\
    \LL_{\infty}(\mu)
    &=
    \sup\lc\labs\aa\rabs,
    (x,\aa)\in\supp\mu\rc.
\end{aligned}
\end{equation}
Jensen inequality states that,
\begin{equation}
    \label{eq:Jensen}
    \LL_{q_1}(\mu)
    \leq
    \LL_{q_2}(\mu),
\end{equation}
for any $1\leq q_1\leq q_2\leq \infty$.

For $R>0$, we denote by
$\cP_{\infty,R}\lp\RR^d\times\RR^d\rp$
the subset of measures $\mu$ in
$\cP_{\infty}\lp\RR^d\times\RR^d\rp$
such that $\LL_{\infty}\lp\mu\rp\leq R$.
The probability measures $\mu$ 
involved in
\eqref{eq:MFGC} and \eqref{eq:MFGCM},
have a particular form,
since they are the images of a measure
$m$ on $\TT^d$
by $\lp I_d,\aa\rp$,
where $\aa$ is a bounded measurable
functions from $\TT^d$ to $\RR^d$;
in particular they are supported on the graph of $\aa$.
For $m\in\cP\lp\TT^d\rp$,
we call
$\cP_m\lp\TT^d\times\RR^d\rp$
the set of such measures.
For $\mu\in\cP_m\lp\TT^d\times\RR^d\rp$,
we set $\aa^{\mu}$ to be the unique 
element of $L^{\infty}\lp m\rp$
such that $\mu=\lp I_d,\aa^{\mu}\rp\#m$.
Here, $\LL_{\qt}(\mu)$ and $\LL_{\infty}(\mu)$
defined in \eqref{eq:defLq} are given by
\begin{equation}
    \label{eq:defLqbis}
\begin{aligned}
    \LL_{\qt}(\mu)
    &=
    \norm{\aa^{\mu}}{L^{\qt}(m)},
    \\
    \LL_{\infty}(\mu)
    &=
    \norm{\aa^{\mu}}{L^{\infty}(m)}.
\end{aligned}
\end{equation}

If $X$ is a normed space and $|\cdot|_X$ is its norm,
for $n\geq 1$ we denote by
$C^0\lp X;\RR^n\rp$ the set of bounded
continuous functions from $X$ to 
$\RR^n$;
it is
endowed 
with the norm
$\norminf{v}=\sup_{x\in X}|v(x)|_X$.

We define $C^{0,1}\lp[0,T]\times\TT^d;\RR\rp$ as the set of the functions
$v\in C^0\lp[0,T]\times\TT^d;\RR\rp$ differentiable at any point with respect to
the state variable, and such that its gradient 
satisfies $\nabla_xv\in
C^{0}\lp[0,T]\times\TT^d;\RR^d\rp$.
This is a Banach space equipped with the norm 
$\norm{v}{C^{0,1}}=\norminf{v}+\norminf{\nabla_xv}$.

For $\bb\in(0,1)$ and $n\geq 1$,
we denote by
$C^{\frac{\bb}2,\bb}\lp[0,T]\times\TT^d;\RR^n\rp$
the parabolic space 
of Hölder continuous functions
which is commonly defined by
\begin{equation*}
    C^{\frac{\bb}2,\bb}\lp[0,T]\times\TT^d;\RR^n\rp 
    = \lc
    \begin{aligned}
        v& \in C^0([0,T]\times\TT^d;\RR^n),
        \exists C>0 \text{ s.t. }
        \forall (t_1,x_1),(t_2,x_2)\in [0,T]\times\TT^d, \\
        &|v(t_1,x_1)-v(t_2,x_2)|\leq
        C\lp|x_1-x_2|^2 +|t_1-t_2|\rp^{\frac{\bb}2} 
    \end{aligned}
    \rc.
\end{equation*}
This is a Banach space equipped with the norm,
\begin{equation*}
    \norm{v}{C^{\frac{\bb}2,\bb}} 
    =
    \norminf{v}
    +\sup_{(t_1,x_1)\neq(t_2,x_2)}
    \frac{|v(t_1,x_1)-v(t_2,x_2)|}
    {\lp|x_1-x_2|^2 +|t_1-t_2|\rp^{\frac{\bb}2}}.
\end{equation*}
The space
$C^{\frac{1+\bb}2,1+\bb}([0,T]\times\TT^d;\RR)$
is defined 
as the set of the functions
$v\in C^{0,1}([0,T]\times\TT^d;\RR)$
such that $\nabla_xv\in C^{\frac{\bb}2,\bb}\lp[0,T]\times\TT^d;\RR^n\rp$
and which admits a finite norm defined by,
\begin{equation*}
    \normc{v}{\frac{1+\bb}2,1+\bb}
    =
    \norminf{v}
    +\norm{\nabla_xv}{C^{\frac{\bb}2,\bb}}
    + \sup_{(t_1,x)\neq (t_2,x)\in [0,T]\times\TT^d}
    \frac{|v(t_1,x)-v(t_2,x)|}{|t_1-t_2|^{\frac{1+\bb}2}}.
\end{equation*}
We set $C^{1,2}\lp[0,T]\times\TT^d;\RR\rp$ to be the set
of functions which admit first derivative with resepct to time
and second derivatives with respect to the state variables,
such that these derivatives are continuous with respect
to time and state.

Throughout the paper,
what we call a solution
to \eqref{eq:MFGC}
is precisely defined
by the following definition.
\begin{definition}
    \label{def:sol}
    The triple $(u,m,\mu)$
    is a solution to \eqref{eq:MFGC}
    if $u\in C^{1,2}([0,T]\times\TT^d)$
    is a pointwise solution
    to the Hamilton-Jacobi-Bellman equation
    \eqref{eq:HJB}
    with terminal condition \eqref{eq:CFu},
    $m\in C^0\lp [0,T]\times\TT^d;\RR\rp$
    is solution to the Fokker-Planck-Kolmogorov equation
    \eqref{eq:FPK}
    in the sense of distribution
    with initial condition \eqref{eq:CIm},
    and $\mu \in C^0\lp [0,T];
    \cP_{\infty}\lp\TT^d\times\RR^d\rp\rp$
    satisfies \eqref{eq:defmu}
    at any $t\in[0,T]$.
\end{definition}
A simple way to overcome difficulty \ref{pb:apriori}
is to assume that the Hamiltonian $H$
and some of its derivatives
admit uniform bounds with respect to $\mu$.
In this case, the
well-posedness of the MFGC system
with a possibly degenerate
diffusion
is investigated in \cite{MR3805247}.
Here we avoid such an assumption for \eqref{eq:MFGC}
but we introduce the following
approximating system
which satisfies it,
\begin{subequations}\label{eq:MFGCM}
     \begin{empheq}{align}
        \label{eq:HJBM}
        &-\ptt u^M(t,x)
        - \nu\Delta u^M(t,x)
        + H(x, \nabla_xu^M(t,x),\mu^M(t))
        =
        0 
        &\text{ in } (0,T)\times\TT^d,\\
        \label{eq:FPKM}
        & \ptt m^M_t(t,x)
        - \nu\Delta m^M(t,x) 
        -\divo(H_p(x,\nabla_xu^M(t,x),\mu^M(t))m^M)
         =
        0
        &\text{ in } (0,T)\times\TT^d,\\
        \label{eq:defmuM}
        &\mu^M(t) 
        =
        \lb I_d,
        T_M\lp -H_p\lp \cdot,\nabla_xu^M(t,\cdot),\mu^M(t)\rp\rp
        \rb{\#}m^M(t)
        &\text{ in } [0,T],\\
        \label{eq:CFuM}
        &u^M(T,x)
        =
        g(x,m^M(T))
        &\text{ in } \TT^d,\\
        \label{eq:CImM}
        &m^M(0)=m_0,
    \end{empheq}
\end{subequations}
    where $M$ is a positive constant and
    $T_M$ is a truncation map defined by
    \begin{equation*}
        T_M(v)=\lc
        \begin{aligned}
            &v \text{ if } |v|\leq M, \\
            &\frac{M}{|v|}v \text{ otherwise.}
        \end{aligned}
        \right.
    \end{equation*}
The latter definition can be naturally extended
to the case when $M=\infty$
by taking $T_{\infty}=Id_{\RR^d}$.
In this case systems \eqref{eq:MFGC} and \eqref{eq:MFGCM}
coincide.
A solution to \eqref{eq:MFGCM}
is defined by replacing \eqref{eq:MFGC}
by \eqref{eq:MFGCM} in Definition \ref{def:sol}.

If $M<\infty$ and $\lp u^M,m^M,\mu^M\rp$
is a solution to \eqref{eq:MFGCM},
$\mu^M(t)$ is compactly supported in
 $B_{\RR^d}(0,M)$ the closed ball in $\RR^d$ centered at $0$
with radius $M$, for any $t\in[0,T]$.
Consequently, $u^M$ and $m^M$ should satisfy
estimates depending on $M$ and uniform with respect to $\mu^M$.
Therefore, compactness results for \eqref{eq:MFGCM}
should be less demanding than for \eqref{eq:MFGC}
and difficulty \ref{pb:apriori} should vanish.

\subsection{Assumptions}
\label{subsec:assum}
Let us start with some reasonable assumptions about the regularity and the boundedness
of the Hamiltonian, the terminal cost and the inital distribution of agents.
We introduce two constants: $C_0>0$ and $\bb_0\in(0,1)$.
    \begin{enumerate}[label=\bf{A\arabic*}]
            \item \label{hypo:Hp}
                $H=H(x,p,\mu)$ is convex with respect to $p$,
                and differentiable with respect to $(x,p)$;
                $H_p$ is locally $\bb_0$-Hölder
                continuous
                with respect to $p$;
                $H$ and $H_p$ are continuous
                with respect to $\mu$ on
                $\cP_{\infty,R}\lp\TT^d\times\RR^d\rp$
                for any $R>0$,
                where
                $\cP_{\infty,R}\lp\TT^d\times\RR^d\rp$
                is defined in paragraph \ref{subsec:not}
                and equipped with the weak* topology.
        
            \item\label{hypo:gregx} 
                $g:\TT^d\times\cP\lp \TT^d\rp\rightarrow \RR$
                is  continuous,
                and we suppose that
                $x\mapsto g(x,m)$ is in
                $C^{2+\bb_0}\lp \TT^d\rp$,
                with a norm bounded
                uniformly with respect to $m$, i.e.
                \begin{equation*}
                    \normc{g(\cdot,m)}{2+\bb_0}
                    \leq
                    C_0,
                    \quad \forall m\in\cP(\TT^d).
                \end{equation*}
            
            \item \label{hypo:mreg} 
        $m_0\in \cP(\TT^d)$
        is absolutely continuous
        with respect to the
        Lebesgue measure on $\TT^d$
        and we also name $m_0$
        its density (abuse of notation).
        Assume that $m_0\in C^{\bb_0}(\TT^d)$
        and is positive (see Remark \ref{rem:relaxm0}
        below to drop out the positivity assumption).
    \end{enumerate}
These assumptions are not restrictive when looking for
solutions with the regularity given
in Definition \ref{def:sol}.
However, they can be relaxed if we are interested in
weaker solutions of systems \eqref{eq:MFGC}.

In this paper we consider nonlocal coupling through the controls.
More precisely, we assume that these interactions involve
the quantity $\LL_{q_0}(\mu)$ defined in \eqref{eq:defLq}.

Let us introduce the assumptions used
to address difficulty \ref{pb:FPmu}
which consists of
solving the fixed point relations in $\mu$
given in 
\eqref{eq:defmu} and \eqref{eq:defmuM},
when $(u,m)$ are fixed and have the same
regularity as in Definition \ref{def:sol}.
We introduce $\ll_0\in[0,1)$,
for all $\lp x,p,m\rp\in\TT^d\times\RR^d\times\cP\lp\TT^d\rp$,
and $\mu,\mu^1,\mu^2\in\cP_m\lp\TT^d\times\RR^d\rp$,
we assume that,
\begin{enumerate}[label=\bf{FP\arabic*}]
    \item \label{hypo:Hinvert}
        $|H_p(x,p,\mu)|
        \leq C_0(1+|p|^{q-1})
        +\ll_0 \LL_{q_0}(\mu)$.
    \item \label{hypo:Hcontrac}
        $\labs H_p(x,p,\mu^1)
        -H_p(x,p,\mu^2)\rabs
        \leq
        \ll_0
        \norm{\aa^{\mu^1}
        -\aa^{\mu^2}}{L^{q_0}(m)}$.
\end{enumerate}
These structural assumptions for MFGC are new in the literature 
and participate to the originality and novelty of the results
presented in this paper.
Moreover they do not seem to be restrictive as it is explained
in what follows.

We recall that the optimal control of a representative agent
is given by $\aa=-H_p\lp x,\nabla_xu,\mu\rp$.
Since $\LL_{q_0}(\mu)$ is homogeneous to the norm of a control,
we cannot expect the dependency of $H_p$ upon $\mu$
to involve an exponent larger than one.
Moreover if $m$ is the first marginal of $\mu$,
taking the $L^{q_0}(m)$-norm in \ref{hypo:Hinvert}
makes $\LL_{q_0}(\mu)$ appear in both sides
of the resulting inequality;
this explains
the form of the right-hand side in \ref{hypo:Hinvert}
and the necessity
of choosing $\ll_0$ smaller than $1$.
Similar arguments can provide insights on \ref{hypo:Hcontrac},
by noticing that if $\LL_{q_0}$
was seen as a norm on $\cP_m\lp\TT^d\times\RR^d\rp$
then $\norm{\aa^{\mu^1}-\aa^{\mu^2}}{L^{q_0}(m)}$
would be the associated distance.
We refer to Remark $4.3$ in \cite{achdou2020mean}
for a concrete example of a MFGC system which does not admit
solution if $\ll_0=1$.

As in a large part of the literature on MFG or HJB equations, we
consider Hamiltonians that are power-like functions in $p$ at least
asymptotically. Let $q\in(1,\infty)$ be this asymptotic exponent,
and $q'$ the conjugate exponent of $q$
defined by $q'=\frac{q}{q-1}$.
Namely, we assume that $H$ satisfies the
following inequalities,
for all $x\in\TT^d$, $p\in\RR^d$,
$m\in\cP\lp\TT^d\rp$,
and $\mu\in\cP_{m}\lp \TT^d\times \RR^d\rp$,
\begin{enumerate}[label=\bf{B\arabic*}]
    \item \label{hypo:Hh}
        $|H(x,0,\mu)|
        \leq C_0
        +\ll_2\LL_{q_0}(\mu)^{q'}$,
        with $\ll_2\geq0$.
    \item \label{hypo:Hx}
        $|H_x(x,p,\mu)|
        \leq C_0\lp1+|p|^{q}+\LL_{q_0}(\mu)^{q'}\rp$.
    \item \label{hypo:Hpower}
        $H_p(x,p,\mu)\cdot p -H(x,p,\mu)
        \geq 
        C_0^{-1}\lp|p|^q
        -\ll_1\LL_{q_0}\lp\mu\rp^{q'}\rp
        -C_0$,
        where $\ll_1$ is a nonnegative constant
        satisfying
        $0\leq\ll_1<\frac{(1-\ll_0)^{q'}}{C_0^{q'}}$.
\end{enumerate}
One may notice that the dependencies of $H$ upon $p$ and $\mu$
involve different exponents (which happen to be equal when $q=2$).
Indeed the Legendre transform applied to a power-like function
make the exponent change into its conjugate.
Since $H$ is defined as the Legendre transform of the Lagrangian $L$,
the exponent in the dependency of $L$ upon $\aa$ should be $q'$.
Moreover, $\LL_{q_0}(\mu)$ is homogeneous to the norm of a control,
therefore $L$ should 
at most involve $\Lambda_{q_0}(\mu)^{q'}$.
Going back to the Hamiltonian by the Legendre transform,
the exponent on $\LL_{q_0}(\mu)$ stays the same
which explains the right-hand side
in \ref{hypo:Hh}-\ref{hypo:Hpower}.
One may find the abovementionned growth conditions on $L$
in \cite{MonoMFGC}.

Assumption \ref{hypo:Hpower}
is a convexity property
of $H$ with respect to $p$.
In MFG without coupling through the controls, such an assumption
is common, the only difference is that the term in
$\LL_{q_0}(\mu)$ does not appear.
This assumption will be particularly useful to obtain
energy integral estimates by taking advantage
of the duality properties
of the forward-backward systems
\eqref{eq:HJB}, \eqref{eq:FPK}
and \eqref{eq:HJBM}, \eqref{eq:FPKM}.
The inequality satisfied by $\ll_1$
is needed in the calculation for getting these estimates.
Let us mention that the right-hand side in this inequality
comes from the estimates in Lemma \ref{lem:FPV},
and that the constant $C_0$ can be identified with the
one in \ref{hypo:Hinvert}.


In order to obtain classical solutions 
of the HJB equations \eqref{eq:HJB} and \eqref{eq:HJBM},
we need
Hölder continuity
of $(t,x)\mapsto H\lp x,\nabla_xu(t,x),\mu(t)\rp$.
While the space regularity of the latter map
is straightforward here,
its time regularity may be more demanding
and we need assumptions which allow one
to compare $H$ at different
measures $\mu^1,\mu^2\in\cP_{\infty}\lp\TT^d\times\RR^d\rp$.
Assumption \ref{hypo:Hcontrac} is not enough since
it requires $\mu^1$ and $\mu^2$
to share the same marginal with respect to $\TT^d$.
\begin{enumerate}[label=\bf{T}]
   \item
        \label{hypo:Hlip}
        For $R>0$, there exists
        a constant $C_R>0$ 
        such that
        \begin{align*}
            \labs H\lp x,p,\mu^1\rp
            -H\lp x,p,\mu^2\rp\rabs
            &\leq
            C_R\lp
            \norminf[\bb_0]{m^1-m^2}
            +\norminf{\aa^{\mu^1}-\aa^{\mu^2}}
            \rp,
            \\
            \labs H_p\lp x,p,\mu^1\rp
            -H_p\lp x,p,\mu^2\rp\rabs
            &\leq
            C_R\lp
            \norminf[\bb_0]{m^1-m^2}
            +\norminf{\aa^{\mu^1}-\aa^{\mu^2}}
            \rp, 
        \end{align*}
        for $\lp x,p,m^i,\mu^i\rp$
        such that $\lp x,p\rp\in\TT^d\times\RR^d$
        with $|p|\leq R$,
        $m^i\in\cP\lp\TT^d\rp\cap C^0\lp\TT^d\rp$
        with $m^i\geq R^{-1}$,
        $\mu^i\in\cP_{m^i}\lp\TT^d\times\RR^d\rp$
        with $\aa^{\mu^i}\in C^0\lp\TT^d\times\RR^d\rp$
        and $\norminf{\aa^{\mu^i}}\leq R$,
        $i=1,2$.
\end{enumerate}
One may notice that when $\mu^1$ and $\mu^2$
have the same first marginal with respect to $\TT^d$
the second inequality in \ref{hypo:Hlip} is implied
by \ref{hypo:Hcontrac}.
If one is only interested in weak solution
to \eqref{eq:MFGC},
\ref{hypo:Hlip} can be removed.

\begin{remark}
    \label{rem:relaxm0}
    Letting $C_R$ depends on $\norminf{\lp m^i\rp^{-1}}$
    was motivated by models of population dynamics
    which are discussed in paragraphs
    \ref{subsec:flocking} and \ref{subsec:crowd}.
    The drawback of this assumption is that we have
    to assume that the initial distribution
    of agents $m_0$ is positive.

    All the results in this paper hold
    if we do not assume $m_0$
    to be positive in \ref{hypo:mreg},
    and we remove the condition
    $m^i\geq R^{-1}$ in \ref{hypo:Hlip}.
\end{remark}

\subsection{Main results}
\label{subsec:MFGCMain}
We recall that assumptions \ref{hypo:Hinvert}
and \ref{hypo:Hcontrac}
are designed to address difficulty \ref{pb:FPmu},
and \ref{hypo:Hlip} to obtain time regularity
of the fixed point $\mu$ in \ref{eq:defmu}
or \ref{eq:defmuM}.
More precisely,
we state
the following lemma
that will be proved in
Section \ref{sec:fixed_point}.
\begin{lemma}
    \label{lem:FPmu}
    Assume \ref{hypo:Hp},
    \ref{hypo:Hinvert}, \ref{hypo:Hcontrac}
    and \ref{hypo:Hlip}.
    Take $p\in C^{\frac{\bb}2,\bb}\lp[0,T]\times\TT^d;\RR^d\rp$
    and $m\in C^{\frac{\bb}2,\bb}\lp[0,T]\times\TT^d\rp$
    such that $m\geq R^{-1}$
    and $m(t)\in\cP\lp\TT^d\rp$
    for $t\in[0,T]$,
    where $\bb\in(0,1)$ and $R>0$ are constants.
    For any $t\in[0,T]$, there exists a unique
    $\mu(t)\in\cP\lp \TT^d\times\RR^d\rp$
    satisfying
    \begin{equation*}
        \mu^M(t)
        =
        \lb I_d,
        T_M\lp -H_p\lp \cdot,p(t,\cdot),\mu^M(t)\rp\rp
        \rb{\#}m(t),
    \end{equation*}
    where $M\in(0,\infty]$.
    Moreover,
    the map $(t,x)\mapsto\aa^{\mu^M(t)}(x)$
    is in 
    $C^{\frac{\bb\bb_0}2,\bb\bb_0}\lp[0,T]\times\TT^d;\RR^d\rp$,
    and its associated norm can be estimated from above
    by a constant which depends on
    $\norm{p}{C^{\frac{\bb}2,\bb}}$,
    $\norm{m}{C^{\frac{\bb}2,\bb}}$,
    and the constants in the assumptions.
\end{lemma}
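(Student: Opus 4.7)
The plan is to proceed in two main steps: first, for each fixed $t$, solve the fixed-point relation for $\mu^M(t)$ via the Banach contraction theorem, using \ref{hypo:Hcontrac} to supply the contraction ratio and \ref{hypo:Hinvert} to locate a complete invariant set; second, upgrade the pointwise fixed point to a jointly Hölder continuous map of $(t,x)$, which is where assumption \ref{hypo:Hlip} becomes indispensable.

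\textbf{Pointwise fixed point.} For each fixed $t \in [0,T]$, I rewrite the problem as finding a fixed point of
\[
\Phi_t(\aa)(x) := T_M\bigl(-H_p\lp x, p(t,x), (I_d,\aa)\# m(t)\rp\bigr)
\]
on $L^{q_0}(m(t); \RR^d)$. Since $T_M$ is $1$-Lipschitz, \ref{hypo:Hcontrac} yields the pointwise bound $|\Phi_t(\aa^1)(x) - \Phi_t(\aa^2)(x)| \leq \ll_0 \norm{\aa^1 - \aa^2}{L^{q_0}(m(t))}$, so $\Phi_t$ is a strict contraction of ratio $\ll_0<1$ in $L^{q_0}(m(t))$. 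For a complete invariant set, when $M<\infty$ the $L^\infty$-ball of radius $M$ is automatically stable, while when $M=\infty$ assumption \ref{hypo:Hinvert} shows stability of the $L^{q_0}(m(t))$-ball of radius $R_t := (1-\ll_0)^{-1} C_0(1+\norminf{p(t,\cdot)}^{q-1})$. Banach's theorem then produces the unique fixed point $\aa^{\mu^M(t)}$, and reinjecting it into \ref{hypo:Hinvert} pointwise delivers a uniform bound $\norminf{\aa^{\mu^M(t)}} \leq R$ depending only on $\norminf{p}$ and the constants of the problem.

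\textbf{Joint Hölder regularity.} With the uniform bound in hand, the constant $C_R$ in \ref{hypo:Hlip} becomes available. For the temporal estimate, take $t_1,t_2\in[0,T]$, set $\aa_i=\aa^{\mu^M(t_i)}$, and introduce the auxiliary measure $\tilde{\mu}:=(I_d,\aa_2)\#m(t_1)$; it shares the first marginal $m(t_1)$ with $\mu^M(t_1)$ but carries the control field $\aa_2$ of $\mu^M(t_2)$. The $1$-Lipschitz property of $T_M$ gives the pointwise control $|\aa_1(x)-\aa_2(x)| \leq |H_p(x,p(t_1,x),\mu^M(t_1)) - H_p(x,p(t_2,x),\mu^M(t_2))|$, which I split as three increments by inserting the intermediate points $(x,p(t_2,x),\mu^M(t_1))$ and $(x,p(t_2,x),\tilde\mu)$. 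The first increment, involving only a change in $p$, is handled by the local $\bb_0$-Hölder continuity of $H_p$ in $p$ from \ref{hypo:Hp} applied to the $\bb$-Hölder time increment of $p$, giving $\leq C \norm{p}{C^{\frac{\bb}2,\bb}}^{\bb_0} |t_1-t_2|^{\bb\bb_0/2}$. The second increment involves two measures with the common marginal $m(t_1)$, so \ref{hypo:Hcontrac} bounds it by $\ll_0 \norm{\aa_1-\aa_2}{L^{q_0}(m(t_1))}$. The third increment pairs $\tilde\mu$ and $\mu^M(t_2)$, which have identical control fields $\aa_2$ but different first marginals, so \ref{hypo:Hlip} reduces it to $C_R \norminf[\bb_0]{m(t_1)-m(t_2)} \leq C_R \norm{m}{C^{\frac{\bb}2,\bb}}^{\bb_0} |t_1-t_2|^{\bb\bb_0/2}$. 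Taking the $L^{q_0}(m(t_1))$-norm of the resulting pointwise inequality and using $\ll_0<1$ to absorb the self-dependent term yields $\norm{\aa_1-\aa_2}{L^{q_0}(m(t_1))} \leq C|t_1-t_2|^{\bb\bb_0/2}$; plugging this back into the pointwise bound delivers the sup-norm estimate $\norminf{\aa_1-\aa_2} \leq C'|t_1-t_2|^{\bb\bb_0/2}$. Spatial regularity at fixed $t$ follows from an analogous Hölder argument on $H_p$ applied to the $\bb$-Hölder increment of $p$ in $x$, no marginal issue arising since $t$ is fixed.

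\textbf{Main obstacle.} The delicate step is the temporal estimate. Because the first marginal $m(t)$ of $\mu^M(t)$ varies with $t$, \ref{hypo:Hcontrac} does not apply directly across different times, while \ref{hypo:Hlip} fails to supply a contractive coefficient. The intermediate measure $\tilde{\mu}$ reconciles the two: it isolates the marginal change (handled by \ref{hypo:Hlip} with a vanishing control-field difference) from the control-field change (handled by \ref{hypo:Hcontrac} with the crucial contraction factor $\ll_0<1$). This is precisely why \ref{hypo:Hlip} is introduced alongside \ref{hypo:Hcontrac}.
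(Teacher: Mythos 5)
Your proof is correct and follows essentially the same route as the paper, which deduces this lemma from Lemma \ref{lem:FPV} (Banach fixed point for the map $\aa\mapsto T_M(-H_p(\cdot,p(t,\cdot),(I_d,\aa)\#m(t)))$, contractive by \ref{hypo:Hcontrac}, with the a priori bound \eqref{eq:boundL} from \ref{hypo:Hinvert}) and Lemma \ref{lem:distmu} (a three-term decomposition of $H_p(x,p^1,\mu^1)-H_p(x,p^2,\mu^2)$ through an auxiliary measure that holds one control field fixed, treating the marginal change with \ref{hypo:Hlip}, the control-field change with \ref{hypo:Hcontrac}, and the $p$-change with \ref{hypo:Hp}). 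The only cosmetic difference is your choice of intermediate measure $\tilde\mu=(I_d,\aa_2)\#m(t_1)$ versus the paper's $\tilde\mu=(I_d,\aa^{\mu^1})\#m^2$ — symmetric and equally valid — and your preference for $L^{q_0}(m(t))$ over $C^0(\TT^d;\RR^d)$ as the fixed-point space, which changes nothing since the contraction estimate holds in both and the image of the iterate map is automatically continuous.
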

In Section \ref{sec:apriori},
we prove the a priori estimates stated in the
following lemma.
\begin{lemma}
    \label{lem:MFGCapriori}
    Assume
    \ref{hypo:Hp}-\ref{hypo:mreg},
    \ref{hypo:Hx},
    \ref{hypo:Hpower},
    \ref{hypo:Hinvert},
    \ref{hypo:Hcontrac}
    and
    \ref{hypo:Hlip}.
    If $(u,m,\mu)$ is
    a solution to \eqref{eq:MFGCM}
    for $M\in(0,\infty]$,
    then
    \begin{itemize}
        \item
            $\norminf{\nabla_xu}\leq C\lp 1+\norminf{u}\rp$
            and
            $\norminf{u}\leq C\lp 1+\norminf[q]{\nabla_xu}\rp$,
            where $C$ is independent of $M$ and depends only on the
            constants in the assumptions,
        \item
            $m$ is positive,
        \item
            $u\in
            C^{1+\frac{\bb}{2},2+\bb}
            \lp [0,T]\times\TT^d\rp$,
        \item
            $m\in C^{\frac{\bb}2,\bb}\lp[0,T]\times\TT^d;\RR\rp$,
        \item
            $(t,x)\mapsto\aa^{\mu(t)}(x)$
            is in 
            $C^{\frac{\bb}2,\bb}\lp[0,T]\times\TT^d;\RR^d\rp$,
    \end{itemize}
    where $\bb\in\lp0,\bb_0^2\rp$.
    Moreover, $\norminf{m^{-1}}$ and the norms associated with the last three
    items above depend only on $\norminf{u}$,
    $\norminf{m_0^{-1}}$,
    $\bb$ and the constants in the assumptions.
\end{lemma}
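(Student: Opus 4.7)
The plan is to combine the fixed-point machinery of Lemma~\ref{lem:FPmu} with classical parabolic tools (comparison principle, Bernstein's method, and Schauder estimates), proceeding in three stages.

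\textbf{Stage 1 (coupled $L^\infty$ bounds).} From the fixed-point identity \eqref{eq:defmuM} and the truncation $|T_M(\cdot)|\leq|\cdot|$, one has $|\aa^{\mu(t)}(x)|\leq|H_p(x,\nabla_xu(t,x),\mu(t))|$. Applying \ref{hypo:Hinvert} and taking the $L^{q_0}(m(t))$-norm, the $\mu$-contribution on the right-hand side is absorbed thanks to $\ll_0<1$, yielding
\[
    \LL_{q_0}(\mu(t))\leq\frac{C_0}{1-\ll_0}\bigl(1+\|\nabla_xu(t,\cdot)\|_\infty^{q-1}\bigr),
\]
uniformly in $M\in(0,\infty]$. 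Combining this with \ref{hypo:Hh} and \ref{hypo:Hinvert} through the identity $H(x,p,\mu)=H(x,0,\mu)+\int_0^1 H_p(x,sp,\mu)\cdot p\,ds$ and Young's inequality gives $|H(x,\nabla_xu,\mu)|\leq C(1+\|\nabla_xu\|_\infty^q)$. The comparison principle applied to \eqref{eq:HJBM}, viewed as a backward linear heat equation with bounded terminal datum $g$ (by \ref{hypo:gregx}) and source $-H$, then yields $\|u\|_\infty\leq C(1+\|\nabla_xu\|_\infty^q)$.

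\textbf{Stage 2 (Bernstein estimate).} To obtain $\|\nabla_xu\|_\infty\leq C(1+\|u\|_\infty)$, I would differentiate \eqref{eq:HJBM} in $x_k$, multiply by $\partial_{x_k}u$, and sum to derive
\[
    -\ptt w-\nu\Delta w+H_p(x,\nabla_xu,\mu)\cdot\nabla w+2\nu|D^2u|^2=-2H_x(x,\nabla_xu,\mu)\cdot\nabla_xu,
\]
where $w:=|\nabla_xu|^2$. I would then form an auxiliary function $W=w+Ku$ for $K$ large enough (or, if needed, a nonlinear variant such as $W=e^{Ku}w$) and examine its maximum over $[0,T]\times\TT^d$. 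The terminal value of $W$ is controlled by \ref{hypo:gregx}, so the only issue is an interior maximum; at such a point, \ref{hypo:Hpower} produces a favorable $-KC_0^{-1}|\nabla_xu|^q$ term from the combination $K(H_p\cdot\nabla_xu-H)$, \ref{hypo:Hx} bounds $H_x\cdot\nabla_xu$, and the $\mu$-terms are controlled via Stage~1 as $\LL_{q_0}(\mu)^{q'}\leq C(1+\|\nabla_xu\|_\infty^q)$. The condition $\ll_1<(1-\ll_0)^{q'}/C_0^{q'}$ in \ref{hypo:Hpower} is precisely what guarantees that the favorable term strictly dominates the $\mu$-contribution.

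\textbf{Stage 3 (positivity and Hölder regularity).} With $\|\nabla_xu\|_\infty$ bounded, the drift $b(t,x):=-H_p(x,\nabla_xu(t,x),\mu(t))$ is uniformly bounded, so \eqref{eq:FPKM} is a linear parabolic equation with bounded coefficients; the weak maximum principle applied to $m$ (using $m_0\geq\|m_0^{-1}\|_\infty^{-1}>0$ from \ref{hypo:mreg}) gives $\inf m\geq\eta>0$ with $\eta$ depending only on $\|m_0^{-1}\|_\infty$ and $\|b\|_\infty$. For regularity, standard parabolic $L^\infty$-to-$C^{\bb'}$ estimates applied to \eqref{eq:HJBM} with bounded RHS yield an initial Hölder estimate on $u$; Lemma~\ref{lem:FPmu} then produces Hölder regularity of $(t,x)\mapsto\aa^{\mu(t)}(x)$, and \ref{hypo:Hlip} upgrades this to Hölder regularity of $(t,x)\mapsto H(x,\nabla_xu(t,x),\mu(t))$. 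Parabolic Schauder estimates then give $u\in C^{1+\frac{\bb}{2},2+\bb}$; the FPK equation with Hölder drift yields $m\in C^{\frac{\bb}{2},\bb}$; a final application of Lemma~\ref{lem:FPmu} delivers the stated regularity of $\aa^\mu$.

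The main obstacle is the Bernstein step of Stage~2: the term $H_x\cdot\nabla_xu$ a priori grows like $|\nabla_xu|^{q+1}$, which is of higher order than the favorable control $|\nabla_xu|^q$ coming from the convexity. Resolving this requires trading $|\nabla_xu|^{q+1}$ against the $2\nu|D^2u|^2$ term on the left-hand side through Young's inequality, exploiting the constraint on $D^2u\cdot\nabla_xu$ that follows from $\nabla W=0$ at the maximum point, and verifying that every $\mu$-dependent term, which enters through \ref{hypo:Hh}, \ref{hypo:Hx} and the right-hand side of \ref{hypo:Hpower}, is strictly absorbed thanks to the quantitative smallness of $\ll_1$.
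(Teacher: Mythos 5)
Your overall architecture (maximum principle for $\|u\|_\infty$ in terms of $\|\nabla_xu\|_\infty^q$; Bernstein for the reverse; positivity of $m$ by comparison; then the parabolic-Schauder/fixed-point bootstrap for Hölder regularity) matches the paper's decomposition into Lemmas~\ref{lem:MPu}, \ref{lem:Bernstein}, \ref{lem:mbounded} and \ref{lem:ubound}, and Stages~1 and~3 are essentially correct as sketches. The gap is in Stage~2, and it is a substantive one: neither of the auxiliary functions you propose can yield the \emph{linear} bound $\|\nabla_xu\|_\infty\leq C(1+\|u\|_\infty)$, and your proposed resolution of the exponent mismatch via $2\nu|D^2u|^2$ is not how the paper handles it (nor does it work).

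First, the linearity. The paper uses $w=\vp(u)|\nabla_xu|^2$ with the carefully normalized
\[
\vp(v)=\exp\lp\exp\lp-\lp a+b\norminf[-1]{u}v\rp\rp\rp,
\]
so that over the relevant range $|v|\leq\|u\|_\infty$ one has $\vp$ uniformly bounded between $1$ and $e^{e^{-a+b}}$ \emph{and} $|\vp'|/\vp$ pinched between $b\norminf[-1]{u}e^{-a-b}$ and $b\norminf[-1]{u}e^{-a+b}$. This $\norminf[-1]{u}$ scaling is precisely what produces, at the end, an estimate of the form $\norminf[1+q/2]{w}\leq C(1+\|u\|_\infty)(1+\norminf[(1+q)/2]{w})$ and hence the linear bound. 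With a fixed $K$, the function $W=e^{Ku}w$ gives $|\vp'|/\vp^{1+q/2}=|K|e^{-qKu/2}$, which can degenerate exponentially in $\|u\|_\infty$ depending on the sign of $u$ at the maximizer, and $W=w+Ku$ (or $w-Ku$) is worse still since it lacks the multiplicative structure that upgrades the favorable exponent.

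Second, the exponent mismatch. The favorable control $|\nabla_xu|^q$ from \ref{hypo:Hpower} is indeed of lower order than the $|\nabla_xu|^{q+1}$ coming from $H_x\cdot\nabla_xu$, but the paper does \emph{not} resolve this through $2\nu|D^2u|^2$ (that term is simply discarded). Instead, multiplying the differentiated HJB equation by $2\vp$ pairs $H_p\cdot p-H$ with the factor $\frac{\vp'}{\vp}w$, turning the convexity gain into $-C\frac{|\vp'|}{\vp^{1+q/2}}w^{1+q/2}$; since the $H_x$ term contributes only $\sim w^{(1+q)/2}$ and $1+\tfrac q2-(1+q)/2=\tfrac12>0$, the favorable term strictly dominates, with the coefficient of the dominated part scaling like $(1+\|u\|_\infty)$. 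Your idea of extracting a large $|D^2u|^2$ from the condition $\nabla W=0$ does give $|D^2u|^2\gtrsim K^2|\nabla_xu|^4$ when $\vp=e^{Ku}$, but that gain is exactly offset (and in fact overwhelmed) by the unfavorable term $-\nu\frac{\vp''\vp-2(\vp')^2}{\vp^3}w^2$: for $\vp=e^{Ku}$ one has $\vp''\vp-2(\vp')^2=-K^2\vp^2<0$, so this term is \emph{bad}, of the same order $w^2$, and kills the $D^2u$ gain. The sign condition $\vp''\vp-2(\vp')^2\geq0$ is essential, and the pure exponential never satisfies it; this is exactly why the paper resorts to the double-exponential form with $a\geq b$. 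In short, the Bernstein step cannot be carried out with a generic Lions-type weight; the normalized double-exponential and the $w$-multiplied convexity term are the novel mechanisms that produce the specific estimate claimed, and your sketch would have to be replaced, not just fleshed out, to reach the stated conclusion.
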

These estimates are weaker than
their equivalents for MFG systems without
interaction through controls.
In particular,
$u$ is not uniformly
bounded in $\norminf{\cdot}$-norm.
However, we believe that our estimate
of $\norminf{\nabla_xu}$ is the best that
we can achieve in our framework
since its right-hand side
should be at least linear
with respect to $\norminf{u}$.
To our knowledge,
such an estimate for systems of MFG
with nonlocal dependency on $\nabla_xu$
(or more generally for MFG systems
in which we do not have a uniform
a priori estimate on $u$)
is new in the literature.

Here, these a priori estimates
are not sufficient to address the
difficulty \ref{pb:apriori} and to
obtain existence of solutions.
However,
existence can be obtained under
several different kinds of assumptions;
below, we supply a list of existence
results under various assumptions:
\begin{theorem}
    \label{thm:MFGCexi}
    Assume
    \ref{hypo:Hp}-\ref{hypo:mreg},
    \ref{hypo:Hh}-\ref{hypo:Hpower},
    \ref{hypo:Hinvert},
    \ref{hypo:Hcontrac},
    \ref{hypo:Hlip}.
    There exists a solution
    to \eqref{eq:MFGC}
    if one of the following
    assertions is satisfied
    \begin{enumerate}[a)]
        \item
            \label{exisub:H0q}
            $q_0\leq q'$ and
            $\labs H(x,0,\mu)\rabs
            \leq
            C_0\lp 1 +  \LL_{q_0}\lp\mu\rp^{\qt}\rp$,
            where $\qt$ is a constant satisfying $\qt< q'$
            (Proposition \ref{thm:ExiExpo}),
        \item
            \label{exisub:ll}
            $q_0\leq q'$
            and $\ll_1+C_0\ll_2<\frac{(1-\ll_0)^{q'}}{C_0^{q'}}$
            where $\ll_1$ and $\ll_2$ are respectively defined
            in \ref{hypo:Hpower} and \ref{hypo:Hh},
            the $C_0$ on the left-hand side comes
            from $C_0^{-1}$ in \ref{hypo:Hpower}
            and the $C_0$ on the right-hand side
            comes from \ref{hypo:Hinvert}
            (Proposition \ref{thm:ExiLowParam}),
        \item
            \label{exisub:H0}
            $\labs H(x,0,\mu)\rabs
            \leq
            C_0\lp 1+\LL_{q_0}\lp\mu\rp^{q'-1}\rp$,
            for any $\lp x,\mu\rp\in\TT^d\times\cP\lp\TT^d\times\RR^d\rp$
            (Proposition \ref{thm:ExiBound}),
        \item
            \label{exisub:Hx}
            $\labs H_x (x,p,\mu)\rabs
            \leq
            C_0\lp 1+|p|
            +\LL_{q_0}\lp\mu\rp^{q'-1}\rp$,
            for any $\lp x,p,\mu\rp\in\TT^d\times\RR^d\times\cP\lp\TT^d\times\RR^d\rp$
            (Proposition \ref{thm:ExiLowSpaDep}),
        \item
            \label{exisub:T}
            $T\leq T_0$,
            where $T_0$ is a constant depending on the constants in the assumptions
            (Proposition \ref{thm:ExiLowT}).
    \end{enumerate}
\end{theorem}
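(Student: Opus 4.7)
The strategy I would adopt is a two-level approximation: first solve the truncated system \eqref{eq:MFGCM} for finite $M$, where the bounded control $T_M$ makes the dependency on $\mu^M$ effectively uniform; then pass to the limit $M\to\infty$ using a priori estimates that are independent of $M$. For the first step, given a candidate pair $(u,m)$ in a suitable parabolic Hölder space, Lemma \ref{lem:FPmu} produces a unique Hölder-continuous $\mu^M$ satisfying \eqref{eq:defmuM}. Solving the decoupled equations \eqref{eq:HJBM} and \eqref{eq:FPKM} with this $\mu^M$ defines a map $\Phi_M$ on a closed convex set; the $L^\infty$-bound $|T_M(\cdot)|\leq M$ makes $\LL_{q_0}(\mu^M)\leq M$, so classical Schauder estimates for linear parabolic equations make $\Phi_M$ continuous and compact, and Schauder's fixed point theorem yields a solution $(u^M,m^M,\mu^M)$.

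The second step rests on Lemma \ref{lem:MFGCapriori}, which gives, for any solution of \eqref{eq:MFGCM},
\begin{equation*}
    \norminf{\nabla_xu^M}\leq C(1+\norminf{u^M}),\qquad \norminf{u^M}\leq C(1+\norminf[q]{\nabla_xu^M}),
\end{equation*}
with Hölder estimates on $u^M$, $m^M$ and $\aa^{\mu^M}$ that depend only on $\norminf{u^M}$. These two inequalities do not close on their own, so an additional argument is needed to bound $\norminf{u^M}$ uniformly in $M$. The preferred tool for cases \ref{exisub:H0q}-\ref{exisub:Hx} is the forward-backward duality: multiply \eqref{eq:HJBM} by $m^M$ and \eqref{eq:FPKM} by $u^M$, integrate by parts, and use \ref{hypo:Hpower} to get
\begin{equation*}
    \int_{\TT^d} m_0 u^M(0,\cdot) + C_0^{-1}\inttx m^M\lp|\nabla_xu^M|^q-\ll_1\LL_{q_0}(\mu^M)^{q'}\rp\leq \int_{\TT^d} g\,m^M(T) + C_0T + \inttx m^M H(x,0,\mu^M).
\end{equation*}
Combining this with \ref{hypo:Hinvert} (which, together with $q_0\leq q'$ and Jensen's inequality, yields $\LL_{q_0}(\mu^M)\leq C(1+\norminf{\nabla_xu^M})$) produces a scalar inequality on $\norminf{u^M}$. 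Cases \ref{exisub:H0q}-\ref{exisub:ll} close the loop via subcritical exponents or smallness of $\ll_1,\ll_2$; case \ref{exisub:H0} uses the sublinear growth of $H(x,0,\mu)$ in $\LL_{q_0}(\mu)^{q'-1}$; case \ref{exisub:Hx} exploits the Bernstein-type argument used for Lemma \ref{lem:MFGCapriori} but with the strengthened growth of $H_x$, which improves the estimate on $\norminf{\nabla_xu^M}$ to a sublinear one in $\norminf{u^M}$. Case \ref{exisub:T} is handled separately by a contraction argument on the fixed point map $\Phi_\infty$ acting on an invariant ball of $C^{\frac{\bb}2,\bb}$; the linearization factors in $T$ produced by parabolic smoothing make $\Phi_\infty$ a strict contraction when $T\leq T_0$.

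Once $\norminf{u^M}$ is controlled uniformly in $M$, Lemma \ref{lem:MFGCapriori} gives uniform $C^{1+\frac{\bb}2,2+\bb}$ bounds on $u^M$ and $C^{\frac{\bb}2,\bb}$ bounds on $m^M$ and $(t,x)\mapsto\aa^{\mu^M(t)}(x)$. By Ascoli-Arzelà, a subsequence converges to $(u,m,\alpha)$ in the corresponding norms with slightly smaller Hölder exponent; the lower bound on $m^M$ also passes to the limit. Since $\alpha^{\mu^M}$ is bounded uniformly in $L^\infty$, the truncation $T_M$ eventually acts as the identity, so $\mu^M\to\mu$ and the limit satisfies \eqref{eq:defmu} by the continuity clause in \ref{hypo:Hp} and the estimates of \ref{hypo:Hlip}. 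The HJB and FPK equations pass to the limit in the classical sense.

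The main obstacle is Step 3, which is genuinely where the five cases diverge. The abstract energy identity produced by duality is not enough on its own; in each case the critical competition is between the $q$-power growth of $H$ in $p$ and the $q'$-power growth in $\mu$, mediated by \ref{hypo:Hinvert} and \ref{hypo:Hpower}. The cases \ref{exisub:H0q}-\ref{exisub:Hx} correspond to different ways of breaking this tie, whereas \ref{exisub:T} avoids it entirely by invoking smallness of $T$. I would package the five sub-results as the separate propositions announced in the statement, each consisting of the same two approximation steps combined with its dedicated closing estimate.
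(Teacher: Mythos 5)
Your overall framework—truncate with $T_M$, solve \eqref{eq:MFGCM} by Schauder, derive an $M$-independent bound on $\norminf{u^M}$, then remove the truncation—is essentially the paper's (Lemma \ref{lem:existM}, Corollary \ref{cor:exist}, and the five closing propositions). One simplification you are missing: the paper's Corollary \ref{cor:exist} shows that once $\norminf{\nabla_xu^M}\le C$ uniformly in $M$, a \emph{single} sufficiently large $M$ makes $T_M$ the identity and the solution to \eqref{eq:MFGCM} is already a solution to \eqref{eq:MFGC}; no Ascoli--Arzel\`a limit $M\to\infty$ is needed. You gesture at this ("the truncation $T_M$ eventually acts as the identity") but still phrase the step as a compactness/limit argument, which is redundant.

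The larger imprecision is in the allocation of tools to cases. You present forward-backward duality as "the preferred tool for cases \ref{exisub:H0q}--\ref{exisub:Hx}", but in the paper the energy estimate of Lemma \ref{lem:NRJ} is used \emph{only} in cases \ref{exisub:H0q} and \ref{exisub:ll}; this is precisely why only those two cases carry the hypothesis $q_0\le q'$ (the energy bound needs \eqref{eq:boundLq0q'int}, which requires it). Cases \ref{exisub:H0}, \ref{exisub:Hx}, \ref{exisub:T} do \emph{not} use duality at all: they all rest on the Bernstein estimate of Lemma \ref{lem:Bernstein}, $\norminf{\nabla_xu(t)}\le C(1+\max_{t\le s\le T}\norminf{u(s)})$ (or its sharpened version under the hypothesis of \ref{exisub:Hx}), together with \eqref{eq:boundLq0q'inf}, which does not require $q_0\le q'$. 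The upshot is a differential inequality in $\norminf{u(t)}$ that is compared against an ODE super-solution: linear for \ref{exisub:H0} and \ref{exisub:Hx} (hence no time restriction), and $y'=C(1+y^q)$ for \ref{exisub:T} (hence bounded only up to the blow-up time $T_0$). Your proposed Banach contraction for case \ref{exisub:T} is a perfectly workable alternative, but it is a genuinely different route than the paper's, which instead keeps the same Schauder $+$ a priori bound framework for all five cases and simply lets the comparison ODE degenerate. Finally, a minor slip: in your duality identity the term $\inttx m^M H(x,0,\mu^M)$ should not appear once \ref{hypo:Hpower} is applied—that term belongs to the separate maximum-principle estimate \eqref{eq:MPu}, not to the energy identity.
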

An other additional assumption under which existence holds
is the monotonicity condition addressed in \cite{MonoMFGC}.

We also give a uniqueness result
under a short time horizon assumption.
\begin{theorem}[\emph{Uniqueness with short time horizon}]
    \label{thm:UniLowT}
    Assume
    \ref{hypo:Hp}, 
    \ref{hypo:gregx},
    \ref{hypo:mreg},
    \ref{hypo:Hh},
    \ref{hypo:Hx},
    \ref{hypo:Hpower},
    \ref{hypo:Hinvert},
    \ref{hypo:Hcontrac},
    and that the following three assumptions
    are satisfied,
    \begin{itemize}
        \item
            $H_p$ is locally
            Lipschitz continuous with respect to $p$,
        \item
            $g$ satisfies
    \begin{equation}
        \label{eq:gregm}
        \norm{g(\cdot,m^1)-g(\cdot,m^2)}{C^{1+\bb}}{}
        \leq
        C_0W_{q_1}\lp m^1,m^2\rp,
    \end{equation}
    for any $m^1,m^2\in\cP\lp\TT^d\rp$,
    where $q_1\in[1,\infty)$ and
    $W_{q_1}$ is the $q_1$-Wassertein distance
    on measures,
        \item
            the two inequalities in
            \ref{hypo:Hlip}
            hold when we replace
            $\norminf[\bb_0]{m^1-m^2}$ by
            $W_{q_1}\lp m^1,m^2\rp$.
    \end{itemize}
    There exists $T_1>0$ such that
    if $T<T_1$
    then there is at most
    one solution to \eqref{eq:MFGC}.
\end{theorem}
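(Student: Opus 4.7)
The plan is to take two solutions $(u^i,m^i,\mu^i)_{i=1,2}$ to \eqref{eq:MFGC} on $[0,T]$, set $\dd u=u^1-u^2$, $\dd m=m^1-m^2$, $\dd\aa(t,x)=\aa^{\mu^1(t)}(x)-\aa^{\mu^2(t)}(x)$, and to establish three coupled estimates on
\begin{equation*}
U:=\norminf{\nabla_x\dd u},\qquad W:=\sup_{t\in[0,T]}W_{q_1}(m^1(t),m^2(t)),\qquad A:=\norminf{\dd\aa}.
\end{equation*}
Beforehand, Lemma \ref{lem:MFGCapriori}, combined with a short-time maximum-principle bound on $\norminf{u^i}$ obtained from \ref{hypo:Hh}, \ref{hypo:Hpower} and \ref{hypo:Hinvert}, provides uniform $C^{1+\bb/2,2+\bb}$-bounds on $u^i$, $C^{\bb/2,\bb}$-bounds on $m^i$ and $\aa^{\mu^i}$, and a uniform positive lower bound on $m^i$, independent of which of the two solutions is considered.

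For $U$, linearising the HJB equation yields
\begin{equation*}
-\ptt\dd u-\nu\DD\dd u+V\cdot\nabla_x\dd u=H(x,\nabla_xu^2,\mu^2)-H(x,\nabla_xu^2,\mu^1),
\end{equation*}
with bounded drift $V=\int_0^1 H_p(x,\nabla_xu^2+s\nabla_x\dd u,\mu^1)\,ds$, terminal condition $\dd u(T)=g(\cdot,m^1(T))-g(\cdot,m^2(T))$ controlled in $C^{1+\bb}$ by $C\,W_{q_1}(m^1(T),m^2(T))\le CW$ via \eqref{eq:gregm}, and right-hand side controlled by the modified \ref{hypo:Hlip}; standard parabolic gradient estimates then give $U\le CW+CT^{\frac12}A$, the $T^{\frac12}$ factor on the source coming from the time integral of the heat kernel's gradient. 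For $W$, writing \eqref{eq:FPK} in the form $H_p^1m^1-H_p^2m^2=H_p^1\dd m+(H_p^1-H_p^2)m^2$ yields a forward parabolic equation for $\dd m$ in divergence form with $\dd m(0)=0$ and source of order $U+W+A$ (through local Lipschitz continuity of $H_p$ in $p$ and the modified \ref{hypo:Hlip}); after absorbing the $W$ term for $T$ small, $W\le CT^{\frac12}(U+A)$. For $A$, writing $\aa^{\mu^i}=-H_p(\cdot,\nabla_xu^i,\mu^i)$ and decomposing
\begin{equation*}
\dd\aa=-[H_p(\cdot,\nabla_xu^1,\mu^1)-H_p(\cdot,\nabla_xu^2,\mu^1)]-[H_p(\cdot,\nabla_xu^2,\mu^1)-H_p(\cdot,\nabla_xu^2,\mu^2)],
\end{equation*}
the first bracket is bounded by $CU$ via local Lipschitzness of $H_p$ in $p$; for the second, introduce the auxiliary measure $\nut=(I_d,\aa^{\mu^1})\#m^2$ (which shares its first marginal $m^2$ with $\mu^2$), so that $H_p(\cdot,\nabla_xu^2,\mu^1)-H_p(\cdot,\nabla_xu^2,\nut)$ is bounded by $CW$ through the modified \ref{hypo:Hlip}, whereas $H_p(\cdot,\nabla_xu^2,\nut)-H_p(\cdot,\nabla_xu^2,\mu^2)$ is bounded by $\ll_0 A$ through \ref{hypo:Hcontrac}; since $\ll_0<1$, absorbing to the left yields $A\le C(U+W)$.

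Plugging these three inequalities into each other, the quantity $U+W+A$ satisfies $U+W+A\le\Psi(T)(U+W+A)$ for some continuous $\Psi$ with $\Psi(T)\to0$ as $T\to0$: the $T^{\frac12}$-smallness in the estimate on $W$ (driven by $\dd m(0)=0$) propagates to the terminal-cost contribution in the estimate on $U$, while the $T$-independent third estimate is harmless because $W$ itself is of order $T^{\frac12}$. Choosing $T_1$ such that $\Psi(T_1)<1$ then forces $U=W=A=0$ for $T<T_1$, and hence $\dd u\equiv0$, $\dd m\equiv0$, $\dd\mu\equiv0$.

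The main obstacle is closing the loop: the terminal-cost contribution $C\,W_{q_1}(m^1(T),m^2(T))$ in the estimate for $U$ carries no $T$ factor of its own and only becomes small through the Fokker--Planck estimate for $\dd m$, which itself depends back on $U$ and $A$; similarly, the strict contraction constant $\ll_0<1$ in \ref{hypo:Hcontrac} is crucial in closing the fixed-point estimate for $A$, and cannot be replaced by the weaker modified \ref{hypo:Hlip} alone. Carefully tracking the $T$-powers through both parabolic equations and cleanly isolating the cross-dependencies between $U$, $W$ and $A$ is where the bookkeeping is delicate, but each individual estimate is rather standard.
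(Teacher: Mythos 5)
Your overall strategy---proving coupled a priori estimates on $U=\|\nabla_x\delta u\|_\infty$, $W=\sup_t W_{q_1}(m^1(t),m^2(t))$ and $A=\|\delta\alpha\|_\infty$, each gaining a positive power of $T$, and closing the loop for $T$ small---is the same as the paper's. Your $A$-estimate (the two-bracket decomposition through the auxiliary measure $(I_d,\alpha^{\mu^1})\#m^2$, absorbing via the strict contraction $\lambda_0<1$) is exactly the paper's repetition of Lemma~\ref{lem:distmu} with the Wasserstein-modified hypothesis, your $U$-estimate reproduces the paper's use of Theorem~6.48 of \cite{MR1465184} on the linearized HJB equation, and your remark that the terminal-cost term only becomes small through the Fokker--Planck estimate is exactly how the paper closes.

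The gap is in your estimate for $W$. You propose to obtain $W\le CT^{1/2}(U+A)$ from a divergence-form parabolic estimate on the equation solved by $\delta m$. But parabolic estimates control $\|\delta m\|_{L^1}$ or $\|\delta m\|_{L^\infty}$, not $W_{q_1}$ directly, and on the bounded torus one only has the comparison $W_{q_1}(m^1,m^2)\lesssim\|\delta m\|_{L^1}^{1/q_1}$. Your PDE argument therefore actually yields $W\lesssim\bigl(T^{1/2}(U+W+A)\bigr)^{1/q_1}$, which is sublinear when $q_1>1$; plugging this into the other two (linear) estimates only forces $W$ to be $O(T^{\text{some positive power}})$, not $W=0$, so the contraction does not close for general $q_1\in[1,\infty)$. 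The paper avoids this by a trajectorial coupling argument, which is the ingredient you are missing: it realizes $m^1(t)$, $m^2(t)$ as the laws of two diffusions $X^1$, $X^2$ driven by the same Brownian motion from the same initial random variable with law $m_0$, so that $W_{q_1}(m^1(t),m^2(t))\le\mathbb{E}\bigl[|X^1_t-X^2_t|^{q_1}\bigr]^{1/q_1}$, and then bounds the right-hand side linearly by Gronwall's lemma, using the Lipschitz continuity of $\alpha^{\mu^1}$ in $x$ (from local Lipschitzness of $H_p$) together with the bound $\|\delta\alpha(t)\|_\infty\le C\bigl(\|\nabla_x\delta u(t)\|_\infty+W_{q_1}(m^1(t),m^2(t))\bigr)$. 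This representation is essential precisely because it lives directly in the $q_1$-Wasserstein metric; you should replace the divergence-form PDE estimate on $\delta m$ by this coupled-SDE/Gronwall argument.
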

We believe that this uniqueness result
can be easily extended to more general Hamiltonians,
but that the short-time assumption is essential.
Indeed numerical examples in which non-uniqueness occurs
are presented in \cite{achdou2020mean}.
In these examples, we consider groups of agents
who start from some crowded areas at time $t=0$,
and travel through the domain to arrive at
some target areas.
Imposing a short time assumption in such an example
results in the agents not trying to reach the targets at all.
Indeed in this case the kinetic cost makes it more expensive
for them to cross the domain very quickly before the end
of the game than to do nothing
and just wait passively at their starting point.
For this reason we were not interested in finding less
restrictive assumptions in Theorem \ref{thm:UniLowT}.
This theorem should be only seen as an example of
uniqueness result with a short time horizon assumption.
In particular we wanted the proof
in paragraph \ref{subsec:shorttime} to stay simple.

\begin{remark}
    \begin{enumerate}[\roman*)]
        \item
            In this work, we only consider MFGC systems
            in the $d$-dimensional torus $\TT^d$.
            However, we believe that our existence
            results (Theorem \ref{thm:MFGCexi}) hold
            under the same assumptions on the Euclidean space $\RR^d$,
            and that the method introduced in \cite{MonoMFGC}
            to pass from $\TT^d$ to $\RR^d$ can applied here.
        \item
    We did not include the case $q=1$ in this work
    (i.e. when the Hamiltonian is Lipschitz continuous in $p$).
    In this case,
    systems \eqref{eq:MFGC} and \eqref{eq:MFGCM}
    coincide when $M$ is large enough,
    therefore there exists a solution
    to \eqref{eq:MFGC} under assumptions
    \ref{hypo:Hp}-\ref{hypo:mreg},
    \ref{hypo:Hh}-\ref{hypo:Hpower},
    \ref{hypo:Hinvert},
    \ref{hypo:Hcontrac} and 
    \ref{hypo:Hlip},
    by the same arguments as in
    Lemma \ref{lem:existM}.
    \end{enumerate}
\end{remark}

\section{The fixed point relation in $\mu$
and the proof of Lemma \ref{lem:FPmu}}
\label{sec:fixed_point}
We recall that \eqref{eq:MFGC} and
\eqref{eq:MFGCM} conincide when $M=\infty$.
Here, we take $M\in(0,\infty]$.

The following lemma takes advantage of the structural assumptions
\ref{hypo:Hinvert} and \ref{hypo:Hcontrac}
to solve the fixed point relations
\eqref{eq:defmu} and \eqref{eq:defmuM}
which consists of difficulty \ref{pb:FPmu}.
It also states a priori estimates
on $\mu$ which will be of great use
in the next section to obtain
a priori estimates on $u$ and its derivatives.
\begin{lemma}
\label{lem:FPV}
    Assume \ref{hypo:Hp},
    \ref{hypo:Hinvert} and \ref{hypo:Hcontrac}.
    Take $p\in C^0\lp \TT^d;\RR^d\rp$,
    and $m\in\cP(\TT^d)$.
    The following two assertions are satisfied.
    \begin{enumerate}
        \item[(i)] There exists a unique
            $\mu^M \in \cP(\TT^d\times\RR^d)$ such that 
        \begin{equation}
        \label{eq:muFP}
            \mu^M
            =
            \lb I_d,
            T_M\lp -H_p\lp \cdot,p(\cdot),\mu^M\rp\rp
            \rb{\#}m.
        \end{equation}
        For any $\qt\in[1,\infty]$,
        it satisfies
        \begin{equation}
        \label{eq:boundL}
            \LL_{\qt}\lp\mu^M\rp
            \leq
            \frac{C_0}{1-\ll_0}
            \lp 1 +\norm{\labs p\rabs^{q-1}}{L^{\max\lp q_0,\qt\rp}(m)}\rp.
        \end{equation}

        \item[(ii)]The map 
        $(p,m) \mapsto \mu^M$ given by \eqref{eq:muFP},
        is continuous from 
            $C^0\lp \TT^d;\RR^d\rp\times \cP(\TT^d)$
            to $\cP(\TT^d\times\RR^d)$.
            We recall that
            the spaces of measures
            are equipped with the weak-* topology.
\end{enumerate}
\end{lemma}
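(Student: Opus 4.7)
The plan has two parts: (i) existence, uniqueness and the moment bound \eqref{eq:boundL}, and (ii) the continuity claim. For (i), I would recast \eqref{eq:muFP} as a fixed point problem on $L^{q_0}(m;\RR^d)$ by identifying $\mu \in \cP_m(\TT^d \times \RR^d)$ with its density $\aa^\mu$, and defining
\begin{equation*}
    \Phi(\aa)(x) = T_M\bigl(-H_p\bigl(x, p(x), (I_d, \aa)\#m\bigr)\bigr).
\end{equation*}
By \ref{hypo:Hinvert} and the boundedness of $p$, $\Phi$ sends $L^{q_0}(m)$ into itself; because $T_M$ is the metric projection onto the closed convex ball $\overline{B}_{\RR^d}(0,M)$ (taking $\overline{B}_{\RR^d}(0,\infty) = \RR^d$ when $M=\infty$), it is $1$-Lipschitz, so \ref{hypo:Hcontrac} yields
\begin{equation*}
    \norm{\Phi(\aa^1) - \Phi(\aa^2)}{L^{q_0}(m)} \leq \ll_0 \norm{\aa^1 - \aa^2}{L^{q_0}(m)}
\end{equation*}
with $\ll_0 < 1$. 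Banach's fixed point theorem then produces a unique fixed point, hence a unique $\mu^M$ satisfying \eqref{eq:muFP}.

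The bound \eqref{eq:boundL} follows by applying the $L^{\qt}(m)$-norm to the fixed point identity and using $|T_M(v)| \leq |v|$ together with \ref{hypo:Hinvert}: this yields
\begin{equation*}
    \LL_{\qt}(\mu^M) \leq C_0\bigl(1 + \norm{|p|^{q-1}}{L^{\qt}(m)}\bigr) + \ll_0 \LL_{q_0}(\mu^M).
\end{equation*}
For $\qt \geq q_0$, \eqref{eq:Jensen} gives $\LL_{q_0}(\mu^M) \leq \LL_{\qt}(\mu^M)$ and absorption yields the stated estimate. For $\qt < q_0$, \eqref{eq:Jensen} gives $\LL_{\qt}(\mu^M) \leq \LL_{q_0}(\mu^M)$, which reduces the claim to the case $\qt = q_0$ already handled.

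For part (ii), take $(p_n, m_n) \to (p, m)$ in $C^0(\TT^d;\RR^d) \times \cP(\TT^d)$ and call $\mu_n^M$ the associated fixed points. The choice $\qt = \infty$ in \eqref{eq:boundL} gives a uniform bound $\LL_\infty(\mu_n^M) \leq R$, so $\{\mu_n^M\}$ is supported in the compact set $\TT^d \times \overline{B}_{\RR^d}(0, R)$; in particular it is tight, and any weak-$\ast$ subsequential limit $\bar\mu$ exists. To identify $\bar\mu$ I would pass to the limit in
\begin{equation*}
    \int \phi \, d\mu_n^M = \int \phi\bigl(x, T_M(-H_p(x, p_n(x), \mu_n^M))\bigr) \, dm_n(x)
\end{equation*}
against arbitrary test functions $\phi \in C^0(\TT^d \times \RR^d)$, showing that $\bar\mu$ solves the fixed point relation for $(p, m)$; uniqueness from the first part then forces $\bar\mu = \mu^M$ and convergence of the whole sequence.

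The main obstacle lies in this last step: while uniform convergence of $p_n$, weak-$\ast$ convergence of $m_n$ and the support bound on $\mu_n^M$ are immediate, passing to the limit inside $H_p$ requires upgrading the separate continuity in $\mu$ stated in \ref{hypo:Hp} to joint continuity on $\TT^d \times \overline{B}_{\RR^d}(0, R') \times \cP_{\infty, R}(\TT^d \times \RR^d)$. I would obtain this via a standard equicontinuity argument using the local Hölder continuity of $H_p$ in $p$, continuity in $x$, and the weak-$\ast$ compactness of $\cP_{\infty, R}$; it then yields uniform-in-$x$ convergence of $T_M(-H_p(\cdot, p_n(\cdot), \mu_n^M))$, which combined with the weak-$\ast$ convergence of $m_n$ lets me pass to the limit and conclude.
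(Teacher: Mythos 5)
Your proposal is correct and follows essentially the same route as the paper's proof. The one visible variation is the choice of Banach space for the fixed point argument: you use $L^{q_0}(m;\RR^d)$ while the paper uses $C^0(\TT^d;\RR^d)$. Both work, because the right-hand side of the contraction estimate coming from \ref{hypo:Hcontrac} is a constant independent of $x$, so $\Phi$ is $\lambda_0$-Lipschitz in the $L^\infty$ norm and hence in every $L^{\qt}(m)$ norm simultaneously; and the fixed point automatically has a continuous representative since $x\mapsto T_M(-H_p(x,p(x),\mu^M))$ is continuous by \ref{hypo:Hp}. Working in $C^0$ makes this continuity manifest from the start and avoids any ambiguity about $m$-a.e. equivalence classes when $m$ is not fully supported, but this is a matter of taste rather than substance. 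Your caution about the limiting step in (ii) is well placed: the paper dispatches it with a brief appeal to ``continuity of $H_p$ and $T_M$,'' whereas what is actually needed is uniform-in-$x$ convergence of $T_M(-H_p(\cdot,p_n(\cdot),\mu_n^M))$, and the separate continuity in $\mu$ asserted in \ref{hypo:Hp} must be combined with an equicontinuity argument (using the local Hölder bound in $p$ and continuity in $x$ on the compact range $\TT^d\times\overline{B}_{\RR^d}(0,R')\times\cP_{\infty,R}$) to deliver it. Your flagged resolution is the right one, and if anything is more careful than the paper's own treatment.
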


\begin{proof}
\begin{enumerate}
    \item[$(i)$]
    Let us define the following map,
    \begin{equation*}
        \Phi_{(p,m)}^M:
        \qquad
        \begin{aligned}
            C^0&\lp\TT^d;\RR^d\rp
            \rightarrow
            C^0\lp\TT^d;\RR^d\rp
            \\
            &\aa
            \mapsto
            \lc
            \begin{aligned}
                &\TT^d
                \rightarrow
                \RR^d
                \\
                &x
                \mapsto
                T_M\lp -H_p\lp x,p(x),\lp I_d,\aa\rp\# m\rp\rp.
            \end{aligned}
        \right.
        \end{aligned}
    \end{equation*}
    This map is well defined by \eqref{hypo:Hp}.
    It is $\ll_0$-Lipschitz continuous
    by \ref{hypo:Hcontrac} and the fact that $T_M$
    is $1$-Lipschitz continuous,
    we recall that $\ll_0<1$.
    Therefore it admits a unique fixed point by the Banach
    fixed point theorem.
    If $\mu^M\in\cP\lp\TT^d\times\RR^d\rp$ satisfies
    \eqref{eq:muFP} then $\aa^{\mu^M}$ is the only fixed point
    of $\Phi_{(p,m)}^M$.
    Conversely, if we denote by $\aa$ the fixed point of $\Phi_{(p,m)}^M$,
    then $\mu^M$ defined by $\mu^M=\lp I_d,\aa\rp\#m$ satisfies \eqref{eq:muFP}.
    This implies that \eqref{eq:muFP} admits a unique fixed point that we name
    $\mu^M$ in what follows.
    From \ref{hypo:Hinvert},
    $\mu^M$ satisfies,
    \begin{align*}
        \LL_{\qt}\lp\mu^M\rp
        &=
        \norm{\aa^{\mu^M}}{L^{\qt}(m)}
        \\
        &\leq
        \norm{C_0\lp1+\labs p\rabs^{q-1}\rp
        +\ll_0 \LL_{q_0}\lp\mu^M\rp}{L^{\qt}(m)}
        \\
        &\leq
        C_0\lp 1 +\norm{\labs p\rabs^{q-1}}{L^{\qt}(m)}\rp
        +\ll_0 \LL_{\qt}\lp\mu^M\rp,
    \end{align*}
    for $\qt\geq q_0$,
    where we obtained the last line by using the triangle inequality
    for the $L^{\qt}$-norm, and \eqref{eq:Jensen}.
    This implies \eqref{eq:boundL} for any $\qt\geq q_0$.
    Then we extend this result to $1\leq \qt<q_0$ by combining
    \eqref{eq:Jensen} and \eqref{eq:boundL} applied to $q_0$.

    \item[$(ii)$]
        Let $(p^n,m^n)_{n\in\NN}
        \in \lp C^0\lp \TT^d\times\RR^d\rp
        ;\cP(\TT^d)\rp^{\NN}$
        be a convergent sequence to $(p,m)$
        in $C^0\lp \TT^d;\RR^d\rp\times\cP\lp \TT^d\rp$.
        We define
        $\mu^N$ as before,
        and $\lp\mu^{N,n}\rp_{n\in\NN}$
        the fixed points
        satisfying
        \begin{equation}
            \label{eq:muFPn}
            \mu^{N,n}
            =
            \lb I_d,
            T_M\lp
            -H_p\lp\cdot ,p^n(\cdot),\mu^{N,n}\rp\rp\rb{\#}m^n
        \end{equation}
        for $n\in\NN$.
        The sequence $\lp p^n\rp_{n\in\NN}$
        is bounded in $C^0\lp \TT^d;\RR^d\rp$,
        thus \eqref{eq:boundL} with $\qt=\infty$
        yields that
        $\lp\mu^{N,n}\rp_{n\in\NN}$
        are uniformly compactly supported.
        The sequence $\lp\mu^{N,n}\rp$ is
        compact in
        $\cP\lp \TT^d\times\RR^d\rp$
        endowed with the weak-* topology. 
        Let $\mut$ be the limit
        of a subsequence $\lp\mu^{N,\vp(n)}\rp_{n\in\NN}$,
        for $\vp:\NN\to\NN$ an increasing function.
        By continuity of $H_p$ and $T_M$,
        we can pass to the limit in \eqref{eq:muFPn}
        taken at $\vp(n)$
        when $n$ tends to infinity,
        this gives that $\mut$ satisfies
        the same fixed point relation as $\mu$. 
        By uniqueness of this fixed point,
        we deduce that $\mut=\mu$. 
        This implies that the entire sequence
        $\lp\mu^{N,n}\rp$ tends to $\mu$.

        Therefore the map
        $(p,m)\mapsto \mu^M$
        is continuous from
        $C^0\lp \TT^d;\RR^d\rp
        \times \cP\lp \TT^d\rp$
        to $\cP\lp \TT^d\times\RR^d\rp$.
\end{enumerate}
\end{proof}
In particular if $q_0\leq q'$,
\eqref{eq:Jensen} and 
\eqref{eq:boundL} yield
\begin{equation}
    \label{eq:boundLq0}
    \LL_{q_0}(\mu)
    \leq
    \LL_{q'}(\mu)
    \leq
    \frac{C_0}{1-\ll_0}
    \lp 1+\norm[q-1]{p}{L^{q}(m)}\rp,
\end{equation}
and then we use the inequality
$(a+b)^{q'}\leq
\frac{a^{q'}}{\th^{q'-1}}
+\frac{b^{q'}}{(1-\th)^{q'-1}}$
which holds for $a,b>0$
and for any $\th\in(0,1)$,
to obtain
\begin{equation}
    \label{eq:boundLq0q'int}
    \LL_{q_0}(\mu)^{q'}
    \leq
    \frac{C_0^{q'}}{(1-\ll_0)^{q'}}
    \lp \th^{1-q'}
    +(1-\th)^{1-q'}\norm[q]{p}{L^{q}(m)}\rp.
\end{equation}
If $q\in[1,\infty]$ without
restriction,
we obtain
\begin{equation}
    \label{eq:boundLq0q'inf}
    \LL_{q_0}(\mu)^{q'}
    \leq
    \frac{C_0^{q'}}{(1-\ll_0)^{q'}}
    \lp \th^{1-q'}
    +(1-\th)^{1-q'}\norminf[q]{p}\rp.
\end{equation}
These latter three inequalities will be of great use in Section
\ref{sec:apriori} for getting a priori estimates.

Given $(u,m)$
as regular as in definition \ref{def:sol},
we can use Lemma \ref{lem:FPV} to prove that
the fixed point relations \eqref{eq:defmu} and \eqref{eq:defmuM}
are well-posed,
and that if $(u,m,\mu)$ is a solution to \eqref{eq:MFGC}
or \eqref{eq:MFGCM} then $\mu$ is continuous with respect
to time.
However,
we need a better regularity in time
to get classical solution of the HJB equations \eqref{eq:HJB}
and \eqref{eq:HJBM}.
In Lemma \ref{lem:distmu}, we use \ref{hypo:Hlip} to obtain
an estimate of the distance between two fixed points
of \eqref{eq:muFP} associated with different $(u,m)$.
We will be particularly interested in using this estimate
on a solution to \eqref{eq:MFGCM} at different times.

\begin{lemma}
    \label{lem:distmu}
    Assume \ref{hypo:Hp},
    \ref{hypo:Hinvert}, \ref{hypo:Hcontrac}
    and \ref{hypo:Hlip}.
    Take $p^1, p^2
    \in C^0\lp \TT^d;\RR^d\rp$,
    and $m^1,m^2\in \cP\lp \TT^d\rp\cap C^0\lp\TT^d;\RR\rp$
    some positive probability measures.
    We define $\mu^1, \mu^2\in \cP\lp \TT^d\times\RR^d\rp$ 
    as the fixed point in $(i)$ in Lemma \ref{lem:FPV}
    associated with $\lp p^1,m^1\rp$ and $\lp p^2,m^2\rp$, respectively.
    There exists a constant $C$ such that
    \begin{equation}
        \label{eq:distmu}
        \norminf{\aa^{\mu^1}-\aa^{\mu^2}}
        \leq
        C\lp \norminf[\bb_0]{p^1-p^2}
        +\norminf[\bb_0]{m^1-m^2}\rp,
    \end{equation}
    where $C$ depends on $\norminf{p^i}$,
    $\norminf{\lp m^i\rp^{-1}}$,
    for $i=1,2$,
    and the constants in the assumptions.
\end{lemma}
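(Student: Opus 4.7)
The plan is to decompose $\aa^{\mu^1}-\aa^{\mu^2}$ so as to separate the effects of changing $p$, changing the marginal $m$, and changing the control function $\aa^{\mu}$. Since $T_M$ is $1$-Lipschitz continuous and $\aa^{\mu^i}(x) = T_M\bigl(-H_p(x, p^i(x), \mu^i)\bigr)$, it is enough to estimate $|H_p(x, p^1(x), \mu^1) - H_p(x, p^2(x), \mu^2)|$ pointwise. The main obstacle is that \ref{hypo:Hcontrac} only controls a change in measure when the two measures share the same first marginal, which $\mu^1$ and $\mu^2$ do not. My idea is to introduce the auxiliary measure $\tilde\mu := (I_d, \aa^{\mu^2})\# m^1$, which has first marginal $m^1$ (as does $\mu^1$) and control function $\aa^{\tilde\mu}=\aa^{\mu^2}$ (as does $\mu^2$); note that $\aa^{\mu^2}$ is genuinely continuous on all of $\TT^d$ because Lemma \ref{lem:FPV} constructs it as a fixed point inside $C^0\lp\TT^d;\RR^d\rp$, and $m^1$ is positive, so $\tilde\mu$ is unambiguous.

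Writing the triangle inequality around $(p^2,\mu^1)$ and $(p^2,\tilde\mu)$ yields
\begin{align*}
|H_p(x, p^1, \mu^1) - H_p(x, p^2, \mu^2)|
&\leq |H_p(x, p^1, \mu^1) - H_p(x, p^2, \mu^1)| \\
&\quad + |H_p(x, p^2, \mu^1) - H_p(x, p^2, \tilde\mu)| \\
&\quad + |H_p(x, p^2, \tilde\mu) - H_p(x, p^2, \mu^2)|.
\end{align*}
The first term is controlled by the local $\bb_0$-Hölder continuity of $H_p$ in $p$ from \ref{hypo:Hp}, giving $C\norminf[\bb_0]{p^1-p^2}$ with $C$ depending on $\norminf{p^i}$. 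For the second term, $\mu^1$ and $\tilde\mu$ share the first marginal $m^1$, so \ref{hypo:Hcontrac} applies directly and yields $\ll_0 \norm{\aa^{\mu^1}-\aa^{\mu^2}}{L^{q_0}(m^1)} \leq \ll_0 \norminf{\aa^{\mu^1}-\aa^{\mu^2}}$, using that $m^1$ is a probability measure. For the third term, $\tilde\mu$ and $\mu^2$ share the control function $\aa^{\mu^2}$, so the $\norminf{\aa^{\tilde\mu}-\aa^{\mu^2}}$ contribution vanishes and the second inequality of \ref{hypo:Hlip} produces $C_R\norminf[\bb_0]{m^1-m^2}$. The constant $R$ is selected so that $|p^i(x)|\le R$, $m^i\geq R^{-1}$, and $\norminf{\aa^{\mu^i}}\le R$: the last bound is furnished by Lemma \ref{lem:FPV} applied with $\qt=\infty$, which controls $\norminf{\aa^{\mu^i}}$ in terms of $\norminf{p^i}$.

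Combining the three bounds and taking the supremum over $x\in\TT^d$ gives
\[\norminf{\aa^{\mu^1}-\aa^{\mu^2}} \leq C\norminf[\bb_0]{p^1-p^2} + \ll_0\norminf{\aa^{\mu^1}-\aa^{\mu^2}} + C_R\norminf[\bb_0]{m^1-m^2}.\]
Since $\ll_0<1$, I can absorb the middle term into the left-hand side, which yields \eqref{eq:distmu} with the stated dependencies of $C$. The conceptual hurdle is precisely the introduction of $\tilde\mu$: neither \ref{hypo:Hcontrac} nor \ref{hypo:Hlip} alone is strong enough here, but they become complementary once one routes the comparison through a measure whose marginal matches one side and whose control matches the other.
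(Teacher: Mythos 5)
Your proof is correct and follows essentially the same approach as the paper's: a three-term triangle inequality routed through an auxiliary measure that shares the marginal with one of $\mu^1,\mu^2$ and the control with the other, invoking \ref{hypo:Hp}, \ref{hypo:Hcontrac}, and \ref{hypo:Hlip} on the three pieces respectively, and absorbing the $\ll_0$ term. The only differences are cosmetic: you take $\tilde\mu=(I_d,\aa^{\mu^2})\#m^1$ and change $p$ first, whereas the paper takes $\mut=(I_d,\aa^{\mu^1})\#m^2$ and changes $p$ last.
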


\begin{proof}
    We define $\mut$
    by $\mut=\lp I_d,\aa^{\mu^1}\rp\#m^2$.
    The triangle inequality
    and the fact that $T_M$ is a contraction
    imply that for any $x\in\TT^d$,
    \begin{align*}
        \labs \aa^{\mu^1}(x)
        - \aa^{\mu^2}(x)\rabs
        &\leq
        \labs
        H_p\lp x,p^1(x),\mu^1\rp
        -H_p\lp x,p^2(x),\mu^2\rp
        \rabs
        \\
        &\!\begin{multlined}[t][10.5cm]
            \leq
            \labs
            H_p\lp x,p^1(x),\mu^1\rp
            -H_p\lp x,p^1(x),\mut\rp
            \rabs
            \\
            +\labs
            H_p\lp x,p^1(x),\mut\rp
            -H_p\lp x,p^1(x),\mu^2\rp
            \rabs
            \\
            +\labs
            H_p\lp x,p^1(x),\mu^2\rp
            -H_p\lp x,p^2(x),\mu^2\rp
            \rabs.
        \end{multlined}
    \end{align*}
    The measures $\mu^1$ and $\mut$
    are the image measures
    by the same function
    $\lp I_d,\aa^{\mu^1}\rp$,
    of $m^1$ and $m^2$ respectively.
    From 
    \ref{hypo:Hlip},
    we obtain
    \begin{equation*}
        \labs
        H_p\lp x,p^1(x),\mu^1\rp
        -H_p\lp x,p^1(x),\mut\rp
        \rabs
        \leq
        C_R\norminf[\bb_0]{m^1-m^2},
    \end{equation*}
    where $R=\max\lp\norminf{p^i},\norminf{\lp m^i\rp^{-1}}\rp$
    and $C_R$ is the constant defined in \ref{hypo:Hlip}.
    We recall that $\LL_{\infty}\lp\mu^i\rp$
    can be estimated from above by a quantity
    which only depends on $\norminf{p^i}$ and
    the constants in the assumptions,
    by \eqref{eq:boundL}.

    Since $\mut$ and $\mu^2$ have the same marginal
    with respect to $\TT^d$,
    \ref{hypo:Hcontrac} yields that,
    \begin{equation*}
        \labs
        H_p\lp x,p^1(x),\mut\rp
        -H_p\lp x,p^1(x),\mu^2\rp
        \rabs
        \leq
        \ll_0
        \norminf{\aa^{\mu^1}-\aa^{\mu^2}}.
    \end{equation*}
    Then $H_p$ is locally $\bb_0$-Hölder continuous
    by \ref{hypo:Hp} so,
    \begin{equation*}
        \labs
        H_p\lp x,p^1(x),\mu^2\rp
        -H_p\lp x,p^2(x),\mu^2\rp
        \rabs
        \leq
        C\norminf[\bb_0]{p^1-p^2},
    \end{equation*}
    for some constant $C$.
    Combining the latter four inequalities,
    we obtain,
    \begin{equation*}
        \norminf{\aa^{\mu^1}-\aa^{\mu^2}}
        \leq
        C_1\lp\norminf[\bb_0]{p^1-p^2}
        +\norminf[\bb_0]{m^1-m^2}\rp
        +\ll_0\norminf{\aa^{\mu^1}-\aa^{\mu^2}},
    \end{equation*}
    which implies \eqref{eq:distmu}
    up to replacing $C$ with
    $\lp 1-\ll_0\rp^{-1}\max\lp C,C_R\rp$.
\end{proof}
Lemma \ref{lem:FPmu}
is a straightfoward consequence
of Lemmas \ref{lem:FPV} and \ref{lem:distmu}.

\section{A priori estimates
and the proof of Lemma \ref{lem:MFGCapriori}}
\label{sec:apriori}
Here we take $M\in(0,\infty]$,
and $(u,m,\mu)$
a solution to \eqref{eq:MFGCM}
defined in Definition \ref{def:sol}.
We will look for estimates independent of $M$
which allow us to address difficulty \ref{pb:apriori}.
These a priori estimates
imply compactness results
and play an essential role
in the proofs of existence in
Section \ref{sec:ExiUni}.

\subsection{A priori estimates on $u$}
\label{subsc:aprioriu}
When we consider MFG without interactions through controls
and with bounded coupling function and terminal cost,
we can apply the maximum principle on parabolic differential
equations to \eqref{eq:HJB2} below and get an a priori estimates
of $\norminf{u}$ which only depends on the constants in the assumptions.
However, for MFGC systems and more generally for HJB equations
with non-local interactions in $\nabla_xu$,
it is not possible to get such a strong a priori estimate
directly from the maximum principle.
Instead we get
\eqref{eq:MPuint}
and \eqref{eq:MPuinf}
which involve
non-local quantities depending on $\nabla_xu$.
\begin{lemma}
    \label{lem:MPu}
    Under assumptions
    \ref{hypo:Hp},
    \ref{hypo:gregx},
    \ref{hypo:Hh},
    \ref{hypo:Hinvert},
    \ref{hypo:Hcontrac},
    and $q_0\leq q'$,
    for $\th\in(0,1)$
    $u$ satisfies,
    \begin{equation}
        \label{eq:MPuint}
        \norminf{u}
        \leq
        C_0\lp 1+T\rp 
        +\frac{\ll_2C_0^{q'}}
        {(1-\ll_0)^{q'}}
        \lp\th^{1-q'}T
        +\lp1-\th\rp^{1-q'}
        \int_0^T\int_{\TT^d}
        \labs \nabla_xu\rabs^{q}
        dm(t,x)dt\rp,
    \end{equation}
    where $\ll_2$ is defined in \ref{hypo:Hh}.
    More generally, for any $q_0\in[1,\infty]$
    $u$ satisfies,
    \begin{equation}
        \label{eq:MPuinf}
        \norminf{u}
        \leq
        C_0\lp 1+T\rp
        +\frac{\ll_2C_0^{q'}}
        {(1-\ll_0)^{q'}}
        \lp\th^{1-q'}T
        +\lp1-\th\rp^{1-q'}
        \norminf[q]{\nabla_xu}\rp.
    \end{equation}
    
\end{lemma}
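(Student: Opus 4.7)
The plan is to use the parabolic comparison principle with spatially constant barriers that absorb the growth of $H(\cdot,0,\mu)$, and then to convert the resulting bound on $\int_0^T\LL_{q_0}(\mu(s))^{q'}ds$ into the two desired forms by means of the inequalities \eqref{eq:boundLq0q'int} and \eqref{eq:boundLq0q'inf} established in Section \ref{sec:fixed_point}. Since $u$ is assumed to be a classical solution (Definition \ref{def:sol}), everything can be handled pointwise and the non-locality of $H$ in $\nabla_x u$ enters only through the measure $\mu(t)$, which we treat as a given quantity when running the maximum principle.

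First, set $\phi(t)=C_0+\ll_2\LL_{q_0}(\mu(t))^{q'}$, so that by \ref{hypo:Hh} one has $|H(x,0,\mu(t))|\leq\phi(t)$ for every $(t,x)\in[0,T]\times\TT^d$, and define the spatially constant barriers
\begin{equation*}
    w_{\pm}(t)=\pm\Bigl(\norminf{g(\cdot,m(T))}+\int_t^T\phi(s)\,ds\Bigr).
\end{equation*}
I would then verify that $w_+$ is a classical supersolution and $w_-$ a classical subsolution of \eqref{eq:HJBM}. For instance, $\nabla_x w_+=\Delta w_+=0$ gives
\begin{equation*}
    -\ptt w_+ -\nu\Delta w_+ +H(x,\nabla_x w_+,\mu(t))=\phi(t)+H(x,0,\mu(t))\geq 0,
\end{equation*}
and $w_+(T,\cdot)\geq g(\cdot,m(T))=u(T,\cdot)$ by \ref{hypo:gregx}. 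To compare with $u$ I write $v=u-w_+$; since $H$ is $C^1$ in $p$ by \ref{hypo:Hp}, the fundamental theorem of calculus gives $H(x,\nabla_xu,\mu(t))-H(x,0,\mu(t))=b(t,x)\cdot\nabla_x v$ where $b(t,x)=\int_0^1 H_p(x,s\nabla_xu,\mu(t))\,ds$ is bounded on $[0,T]\times\TT^d$ since $u\in C^{1,2}$. Then $v$ satisfies the linear backward parabolic inequality $-\ptt v-\nu\Delta v+b\cdot\nabla_xv\leq 0$ with $v(T,\cdot)\leq 0$, so the standard linear maximum principle yields $v\leq 0$, i.e.\ $u\leq w_+$. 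The symmetric argument applied to $w_-$ gives $u\geq w_-$, so that
\begin{equation*}
    \norminf{u}\leq C_0+C_0T+\ll_2\int_0^T\LL_{q_0}(\mu(s))^{q'}ds.
\end{equation*}

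It remains to convert the integral of $\LL_{q_0}(\mu)^{q'}$ into the two stated quantities. Under the assumption $q_0\leq q'$, one applies \eqref{eq:boundLq0q'int} pointwise in time with $p=\nabla_xu(t,\cdot)$ and integrates in $t$, which produces exactly the bracket in \eqref{eq:MPuint}; for general $q_0\in[1,\infty]$, one instead uses \eqref{eq:boundLq0q'inf} to obtain \eqref{eq:MPuinf}. Combining these substitutions with the previous display finishes the proof. The only delicate point is the passage from the non-local HJB equation to a linear parabolic inequality for $v$: this works here because the barriers are spatially constant, which kills the $\nabla_xw_\pm$ contribution so that only $H(x,0,\mu(t))$ needs to be controlled, and this is exactly what \ref{hypo:Hh} provides. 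No further estimate on $\nabla_xu$ is needed at this stage, which is crucial since obtaining a Bernstein-type bound on $\nabla_xu$ is carried out separately later and would otherwise produce a circular argument.
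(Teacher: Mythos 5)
Your proof is correct and follows essentially the same route as the paper: you reduce the non-local HJB equation to a linear parabolic inequality with bounded first-order coefficients (the paper does this by rewriting the equation as \eqref{eq:HJB2} and invoking the maximum principle; you do it explicitly by constructing the spatially constant barriers $w_\pm$ and comparing), both arriving at $\norminf{u}\leq C_0(1+T)+\ll_2\int_0^T\LL_{q_0}(\mu(s))^{q'}ds$, and then both convert via \eqref{eq:boundLq0q'int} and \eqref{eq:boundLq0q'inf}. The only difference is presentational: you spell out the barrier construction and the linearization step where the paper cites the maximum principle directly.
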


\begin{proof}
    Here, we can rewrite \eqref{eq:HJBM} in
    the following way,
    \begin{equation}
        \label{eq:HJB2}
        -\ptt u(t,x)
        - \nu\Delta u(t,x)
        + \lb \int_0^1 H_p(x,s\nabla_x u,\mu(t))ds\rb
        \cdot \nabla_x u(t,x)
        =
        -H(x, 0,\mu(t)),
    \end{equation}
    for $\lp t,x\rp\in(0,T)\times\TT^d$.
    The maximum principle for parabolic second-order
    equation applies to $u$ and $-u$,
    \begin{equation}
        \label{eq:MPu}
        \norminf{u}
        \leq
        \norminf{u(T,\cdot)}
        +\int_0^T\norminf{H\lp\cdot,0,\mu(t)\rp}dt.
    \end{equation}
    Moreover,
    $|H\lp x,0, \mu(t)\rp|
    \leq
    C_0+\ll_2\LL_{q_0}\lp\mu(t)\rp^{q'}$
    and $|u(T,x)|\leq C_0$
    come from
    \ref{hypo:Hh} and \ref{hypo:gregx}, respectively.
    We combine the latter inequalities
    with \eqref{eq:boundLq0q'int}
    and \eqref{eq:boundLq0q'inf}
    to get 
    \eqref{eq:MPuint} when $q_0\leq q'$,
    and \eqref{eq:MPuinf} respectively.
   \end{proof}

The non-local term in \eqref{eq:MPuint} involving $\nabla_xu$
corresponds roughly speaking to an energy.
Moreover this is a quantity that naturally appears in MFG literature
thanks to duality properties in
the forward-backward systems \eqref{eq:MFGtrad},
\eqref{eq:MFGC},
or \eqref{eq:MFGCM}.
More precisely,
the FPK equations is the dual equation
of the linearized HJB equation with respect to $u$.
Lemma \ref{lem:NRJ} provides an a priori estimate of this quantity.

\begin{lemma}
    \label{lem:NRJ}
    Under assumptions
    \ref{hypo:Hp},
    \ref{hypo:gregx},
    \ref{hypo:Hpower},
    \ref{hypo:Hinvert},
    \ref{hypo:Hcontrac},
    and $q_0\leq q'$,
    the following inequality is satisfied,
    \begin{multline}
        \label{eq:NRJ}
        \int_0^T \int_{\TT^d}
        \labs \nabla_x u\rabs^{q}dm(t,x)dt
        \leq
        \\
        \lp1-\frac{\ll_1C_0^{q'}}
        {(1-\th)^{q'-1}(1-\ll_0)^{q'}}\rp^{-1}
        \lp
        C_0\norminf{u}
        +C_0^2(1+T)
        +\frac{\ll_1C_0^{q'}T}{\th^{q'-1}(1-\ll_0)^{q'}}
        \rp,
    \end{multline}
    for any $\th\in(0,1)$ such that
    $\ll_1<\frac{(1-\th)^{q'-1}(1-\ll_0)^{q'}}{C_0^{q'}}$.
\end{lemma}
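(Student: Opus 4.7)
The plan is to exploit the duality between the HJB and FPK equations, which is the standard route in MFG theory for obtaining energy-type estimates. Testing \eqref{eq:HJBM} against $m$ and integrating over $[0,T]\times\TT^d$, and then using \eqref{eq:FPKM} (in distributional form against the test function $u$) to handle the time and space derivatives via integration by parts, the $-\nu\Delta u$ contribution pairs with the $-\nu\Delta m$ contribution to cancel, while the advection term $\divo(H_p m)$ combines with $\nabla_x u\cdot H_p$. The time derivatives produce only boundary terms at $t=0$ and $t=T$. The net outcome is the Legendre-type identity
\begin{equation*}
    \int_0^T\!\!\int_{\TT^d}\lp H_p(x,\nabla_xu,\mu)\cdot\nabla_xu - H(x,\nabla_xu,\mu)\rp m\,dx\,dt
    =
    \int_{\TT^d}u(0,x)m_0(x)\,dx - \int_{\TT^d}u(T,x)m(T,x)\,dx.
\end{equation*}
The regularity from Definition \ref{def:sol} ($u\in C^{1,2}$ classical, $m\in C^0$ distributional) is precisely what is needed to justify these integrations by parts, possibly by a standard mollification/density argument against $m$.

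Next I would apply the convexity-type assumption \ref{hypo:Hpower} pointwise to the integrand on the left-hand side, yielding the lower bound
\begin{equation*}
    \int_0^T\!\!\int_{\TT^d}\lp H_p\cdot\nabla_xu - H\rp m\,dx\,dt
    \geq
    C_0^{-1}\int_0^T\!\!\int_{\TT^d}|\nabla_xu|^q\,dm(t,x)\,dt
    - C_0^{-1}\ll_1\int_0^T\LL_{q_0}(\mu(t))^{q'}\,dt - C_0 T,
\end{equation*}
where I use $\int_{\TT^d}dm(t)=1$ to handle the additive $-C_0$. On the right-hand side of the duality identity, $\norminf{u(T,\cdot)}\leq C_0$ comes from \ref{hypo:gregx} and $\int m_0 = 1$, so that term is at most $\norminf{u}+C_0$.

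Let me denote $E := \int_0^T\!\int_{\TT^d}|\nabla_xu|^q\,dm(t,x)\,dt$. Combining the two bounds gives
\begin{equation*}
    C_0^{-1}E
    \leq \norminf{u} + C_0 + C_0 T + C_0^{-1}\ll_1\int_0^T\LL_{q_0}(\mu(t))^{q'}\,dt.
\end{equation*}
The final nonlocal term is controlled by invoking \eqref{eq:boundLq0q'int} applied to $p=\nabla_xu(t,\cdot)$ at each time $t$ (which is licit since $q_0\leq q'$) and integrating in time, producing a constant contribution of order $\th^{1-q'}T$ plus a multiple $(1-\th)^{1-q'}\cdot \frac{C_0^{q'}}{(1-\ll_0)^{q'}}\ll_1$ of $E$ itself.

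The main (minor) obstacle is simply the bookkeeping with the parameter $\th\in(0,1)$: the condition $\ll_1<(1-\ll_0)^{q'}/C_0^{q'}$ from \ref{hypo:Hpower} guarantees that one can choose $\th$ small enough so that the coefficient of $E$ appearing on the right is strictly less than~$1$, i.e.\ $\ll_1C_0^{q'}/[(1-\th)^{q'-1}(1-\ll_0)^{q'}]<1$, allowing absorption of the $E$-term into the left-hand side. Rearranging yields exactly \eqref{eq:NRJ}, with the constants appearing in the prescribed positions.
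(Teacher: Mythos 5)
Your proof is correct and follows the same route as the paper: the HJB--FPK duality identity (multiply the HJB equation by $-m$ and the FPK equation by $u$, add, integrate by parts), the pointwise lower bound from \ref{hypo:Hpower}, the boundary-term estimates via \ref{hypo:gregx}, and the time-integrated version of \eqref{eq:boundLq0q'int} with $\th$ chosen small enough to absorb the energy term. No gaps; the bookkeeping matches the constants in \eqref{eq:NRJ}.
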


\begin{proof}
    We multiply \eqref{eq:HJBM} by $-m$
    and \eqref{eq:FPKM} by $u$;
    we add up and integrate 
    over $(0,T)\times\TT^d$
    the resulting quantities;
    after performing some integrations by part,
    we obtain
    \begin{multline*}
        \int_0^T\int_{\TT^d}
        \lb H_p\lp x,\nabla_xu(t,x),\mu(t)\rp\cdot \nabla_xu
        - H\lp x,\nabla_xu(t,x),\mu(t) 
        \rp\rb dm(t,x)dt \\
        =
        \int_{\TT^d}
        u(0,x)dm^0(x) 
        - \int_{\TT^d}
        g(x,m(T))dm(T,x),
    \end{multline*}
    that we can combine with
    \ref{hypo:Hpower} and \ref{hypo:gregx}
    to get,
    \begin{equation*}
        C_0^{-1}\int_0^T \int_{\TT^d}
        \labs \nabla_x u\rabs^{q}dm(t,x)dt
        \leq
        \norminf{u}
        +C_0(1+T)
        +C_0^{-1}\ll_1\int_0^T\LL_{q_0}\lp\mu(t)\rp^{q'} dt.
    \end{equation*}
    We integrate \eqref{eq:boundLq0q'int}
    over $(0,T)$,
    \begin{equation*}
        \int_0^T\LL_{q_0}\lp\mu(t)\rp^{q'} dt
        \leq
        \frac{C_0^{q'}}{(1-\ll_0)^{q'}}
        \lp \th^{1-q'}T
        +(1-\th)^{1-q'}
        \int_0^T\int_{\TT^d}
        \labs \nabla_x u\rabs^{q}dm(t,x) dt\rp,
    \end{equation*}
    where we can choose $\th\in(0,1)$ such that
    $\ll_1<\frac{(1-\th)^{q'-1}(1-\ll_0)^{q'}}{C_0^{q'}}$,
    since $\ll_1$ satisfies the inequality in \ref{hypo:Hpower}.
    The latter three inequalities imply \eqref{eq:NRJ}.
\end{proof}

Roughly speaking,
Lemma \ref{lem:MPu} with $q_0\leq q'$
and Lemma \ref{lem:NRJ}
provide opposite inequalities
which may become complementary
under a smallness condition on the parameters,
implying a
uniform estimate on $\norminf{u}$.
This condition is explicitely given in
the following corollary.

\begin{corollary}
    \label{cor:small_param}
    Under Assumptions
    \ref{hypo:Hp},
    \ref{hypo:gregx},
    \ref{hypo:Hh},
    \ref{hypo:Hpower},
    \ref{hypo:Hinvert},
    \ref{hypo:Hcontrac},
    $q_0\leq q'$,
    and $\ll_1+C_0\ll_2<\frac{(1-\ll_0)^{q'}}{C_0^{q'}}$,
    $u$ is bounded by a quantity which only
    depends on the constants in 
    the assumptions.
\end{corollary}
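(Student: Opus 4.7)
The plan is to combine Lemmas \ref{lem:MPu} and \ref{lem:NRJ} by substituting one into the other, and to exploit the strict inequality $\ll_1+C_0\ll_2<\frac{(1-\ll_0)^{q'}}{C_0^{q'}}$ to produce a self-bounding inequality on $\norminf{u}$ with a coefficient strictly less than one.

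First I would choose the parameter $\th\in(0,1)$. Since $(1-\th)^{q'-1}\to 1$ as $\th\to 0^+$, the assumption $\ll_1+C_0\ll_2<\frac{(1-\ll_0)^{q'}}{C_0^{q'}}$ allows one to fix $\th$ sufficiently small that
\begin{equation*}
    \ll_1 + C_0\ll_2 < \frac{(1-\th)^{q'-1}(1-\ll_0)^{q'}}{C_0^{q'}}.
\end{equation*}
In particular the auxiliary condition $\ll_1<\frac{(1-\th)^{q'-1}(1-\ll_0)^{q'}}{C_0^{q'}}$ required in Lemma \ref{lem:NRJ} is automatically satisfied, so Lemma \ref{lem:NRJ} can be invoked with this $\th$.

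Next I would plug the resulting bound on $\int_0^T\int_{\TT^d}|\nabla_x u|^q\,dm\,dt$ given by \eqref{eq:NRJ} into the right-hand side of the estimate \eqref{eq:MPuint} supplied by Lemma \ref{lem:MPu}. The outcome is an inequality of the form
\begin{equation*}
    \norminf{u}\leq A + B\,\norminf{u},
\end{equation*}
where $A$ depends only on $T$, $\ll_0$, $\ll_1$, $\ll_2$, $C_0$, $q$, $\th$, and where a direct computation shows
\begin{equation*}
    B \;=\; \frac{\ll_2\,C_0^{q'+1}\,(1-\th)^{1-q'}}{(1-\ll_0)^{q'}}
    \Bigl(1-\frac{\ll_1\,C_0^{q'}}{(1-\th)^{q'-1}(1-\ll_0)^{q'}}\Bigr)^{-1}
    \;=\;\frac{C_0\,\ll_2}{\frac{(1-\th)^{q'-1}(1-\ll_0)^{q'}}{C_0^{q'}}-\ll_1}.
\end{equation*}
The strict inequality chosen at the previous step is exactly what guarantees $B<1$.

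Finally, since $B<1$, I would rearrange to get
\begin{equation*}
    \norminf{u}\leq \frac{A}{1-B},
\end{equation*}
where the right-hand side depends only on the constants appearing in the assumptions (and on $T$, $\th$, which itself depends only on those constants). This concludes the proof. The only mildly delicate point is the bookkeeping of the algebraic constants and the verification that $B<1$ is equivalent to the hypothesis $\ll_1+C_0\ll_2<\frac{(1-\ll_0)^{q'}}{C_0^{q'}}$ after optimizing in $\th$; no new analytic tool is needed beyond the two lemmas already established.
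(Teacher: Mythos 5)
Your proposal is correct and follows essentially the same route as the paper's own proof: fix $\th$ small enough so that the assumed inequality holds with $(1-\th)^{q'-1}$ inserted, plug \eqref{eq:NRJ} into \eqref{eq:MPuint}, observe that the resulting coefficient of $\norminf{u}$ is exactly $C_0\ll_2\bigl(\frac{(1-\th)^{q'-1}(1-\ll_0)^{q'}}{C_0^{q'}}-\ll_1\bigr)^{-1}<1$, and absorb. (As a side note, your final rearrangement $\norminf{u}\leq A/(1-B)$ is stated correctly; the paper's displayed inequality at that step has an evident typographical slip, missing a $^{-1}$ on the prefactor.)
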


\begin{proof}
    Combing \eqref{eq:MPuint} and \eqref{eq:NRJ}
    results in,
    \begin{align*}
        \norminf{u}
        &\leq
        \frac{\ll_2C_0^{q'}}
        {(1-\th)^{q'-1}(1-\ll_0)^{q'}}
        C_0\lp1-\frac{\ll_1C_0^{q'}}
        {(1-\th)^{q'-1}(1-\ll_0)^{q'}}\rp^{-1}
        \norminf{u}
        +C_{\th}
        \\
        &\leq
        C_0\ll_2
        \lp\frac{(1-\th)^{q'-1}(1-\ll_0)^{q'}}
        {C_0^{q'}}
        -\ll_1\rp^{-1}
        \norminf{u}
        +C_{\th}
    \end{align*}
    where $\th\in(0,1)$ may be chosen such that
    $\ll_1+C_0\ll_2<\frac{(1-\th)^{q'-1}(1-\ll_0)^{q'}}{C_0^{q'}}$,
    and $C_{\th}$ is a positive constant depending on
    the constants in the assumptions and $\th$.
    This implies
    \begin{equation*}
        \norminf{u}
        \leq
        \lp1-C_0\ll_2
        \lp\frac{(1-\th)^{q'-1}(1-\ll_0)^{q'}}
        {C_0^{q'}}
        -\ll_1\rp^{-1}\rp
        C_{\th},
    \end{equation*}
    where $C_0\ll_2\lp\frac{(1-\th)^{q'-1}(1-\ll_0)^{q'}}
    {C_0^{q'}}-\ll_1\rp^{-1}<1$,
    which concludes the proof.
\end{proof}
Let us mention that in the assumption
$\ll_1+C_0\ll_2<\frac{(1-\ll_0)^{q'}}{C_0^{q'}}$
in Corollary \ref{cor:small_param},
the constant $C_0$ in the left-hand side comes from
the $C_0^{-1}$ in \ref{hypo:Hpower},
and the $C_0$ in the right-hand side comes
from \ref{hypo:Hinvert}.

\subsection{A priori estimates on $m$}
\label{subsec:apriorim}
In order for the HJB equations \eqref{eq:HJB}
and \eqref{eq:HJBM} to admit classical
solutions, we want $\mu$ to be regular in time.
Since $m$ is the marginal of $\mu$ with respect
to $\TT^d$, we first prove that $m$
is regular in the following lemma.
Moreover, we also prove
that $m$ stays positive,
which is required 
in Lemma \ref{lem:FPmu}
to obtain time regularity on $\mu$.
\begin{lemma}
    \label{lem:mbounded}
    Under assumptions
    \ref{hypo:Hp},
    \ref{hypo:mreg},
    \ref{hypo:Hinvert},
    \ref{hypo:Hcontrac},
    $m$ is in
    $C^{\frac{\bb}2,\bb}\lp[0,T]\times\TT^d;\RR\rp$
    for $\bb\in(0,\bb_0)$ and its 
    $C^{\frac{\bb}2,\bb}\lp[0,T]\times\TT^d,\RR\rp$-norm
    can be estimated from above
    by a constant which depends
    on $\norm{m_0}{C^{\bb_0}}$,
    $\norminf{\nabla_xu}$, $\bb$
    and the constants in the assumptions.

    Furthermore, $m$ is positive everywhere and 
    admits a positive lower bound which only
    depends on $\norminf{m_0^{-1}}$,
    $\norminf{\nabla_xu}$
    and the constants in the assumptions.
\end{lemma}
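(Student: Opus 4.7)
The plan is to view \eqref{eq:FPKM} as a linear uniformly parabolic equation in divergence form with bounded measurable drift, and to invoke classical De Giorgi-Nash-Moser regularity together with Aronson's Gaussian bounds on the fundamental solution.

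First I would bound the drift $b(t,x) := H_p(x,\nabla_xu(t,x),\mu(t))$ appearing in \eqref{eq:FPKM}. Assumption \ref{hypo:Hinvert} combined with the estimate \eqref{eq:boundL} of Lemma \ref{lem:FPV}, applied at each fixed $t\in[0,T]$ with $\qt=\infty$, yields
\begin{equation*}
\norminf{b}
\leq C_0\lp 1+\norminf{\nabla_xu}^{q-1}\rp
+\ll_0\sup_{t\in[0,T]}\LL_{q_0}(\mu(t))
\leq L,
\end{equation*}
where $L$ depends only on $\norminf{\nabla_xu}$ and the constants in the assumptions, and is uniform in $M\in(0,\infty]$.

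For the H\"older regularity, I would then observe that $m$ is a distributional solution on $(0,T)\times\TT^d$ of
\begin{equation*}
\ptt m - \nu\Delta m - \divo(b\, m) = 0,
\qquad m(0,\cdot)=m_0,
\end{equation*}
a uniformly parabolic equation in divergence form with $L^{\infty}$ drift and initial datum $m_0 \in C^{\bb_0}(\TT^d)$. Classical De Giorgi-Nash-Moser theory for divergence-form parabolic equations with bounded measurable coefficients (see, e.g., Ladyzhenskaya-Solonnikov-Ural'ceva) provides interior H\"older regularity of $m$ with some exponent $\bar\bb\in(0,1)$ depending only on $L$, $\nu$, $d$. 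Propagating the regularity of $m_0$ up to the initial time by standard local estimates, or equivalently by a Duhamel/heat-kernel representation combined with $\int\Gamma(t,x;0,y)|y-x|^{\bb_0}dy\leq C t^{\bb_0/2}$, then gives $m \in C^{\bb/2,\bb}([0,T]\times\TT^d;\RR)$ for any $\bb\in\lp 0,\min(\bar\bb,\bb_0)\rp$, with norm bounded in terms of $\norm{m_0}{C^{\bb_0}}$, $L$, $\nu$, $d$, $T$ and $\bb$.

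For the positive lower bound, since $b\in L^{\infty}$, Aronson's two-sided Gaussian bounds apply to the fundamental solution $\Gamma(t,x;0,y)$ of $\ptt-\nu\Delta-\divo(b\,\cdot)$: there exist $c_1, C_1>0$ depending only on $L,\nu,d$ such that
\begin{equation*}
\Gamma(t,x;0,y) \geq c_1\, t^{-d/2}\exp\lp-C_1|x-y|^2/t\rp.
\end{equation*}
Writing $m(t,x)=\int_{\TT^d}\Gamma(t,x;0,y)\,m_0(y)\,dy$ and using $m_0(y) \geq \norminf{m_0^{-1}}^{-1}$ together with compactness of $\TT^d$, I would get the uniform lower bound $m(t,x) \geq c_2/\norminf{m_0^{-1}}$ on $[0,T]\times\TT^d$ with $c_2$ depending only on $L,\nu,d,T$; the parabolic Harnack inequality gives an equivalent route. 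The main obstacle is the lack of any a priori H\"older regularity on $b$, which precludes Schauder estimates and limits the H\"older exponent of $m$ to the De Giorgi-Nash-Moser exponent $\bar\bb$, explaining why the statement restricts $\bb$ to $(0,\bb_0)$ without reaching $\bb_0$ itself.
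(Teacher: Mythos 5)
Your proof of the H\"older regularity of $m$ follows essentially the same route as the paper: bound the drift $b=H_p(x,\nabla_xu,\mu)$ in $L^\infty$ via \ref{hypo:Hinvert} and \eqref{eq:boundL}, then invoke regularity theory for divergence-form parabolic equations with bounded measurable coefficients (the paper cites Ladyzhenskaya--Solonnikov--Ural'ceva Thm.\ 2.1 \S V.2 for the $L^\infty$ bound on $m$ and then Lieberman Thm.\ 6.29, which is the De Giorgi--Nash--Moser circle of ideas you appeal to).

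For the positive lower bound on $m$, your argument is genuinely different and also correct. You go through Aronson's two-sided Gaussian bounds on the fundamental solution of $\ptt-\nu\Delta-\divo(b\,\cdot)$ (or, equivalently, the parabolic Harnack inequality), which requires only $b\in L^\infty$ and yields the lower bound directly from the Duhamel representation $m(t,x)=\int_{\TT^d}\Gamma(t,x;0,y)m_0(y)\,dy$ since $\int_{\TT^d}\Gamma(t,x;0,y)\,dy$ is bounded below by the Gaussian lower bound. The paper instead introduces the reciprocal $n=m^{-1}$, shows that it is a viscosity subsolution of a linear divergence-form equation with drift built from $\aa$, compares $n$ with the bounded weak solution $\nt$ of that equation with the same initial datum, and deduces $n\leq\nt$, hence the lower bound on $m$; this avoids the heat-kernel machinery at the cost of a somewhat ad hoc substitution and a bootstrap via the stopping time $T_\ee$ to ensure $n$ is defined where it is used. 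Both arguments produce a lower bound on $m$ depending only on $\norminf{m_0^{-1}}$, $\norminf{\nabla_xu}$ (through $\norminf{b}$), $\nu$, $d$, $T$, so either route is admissible; yours is the more standard/robust, the paper's the more elementary and self-contained.

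One small caveat, shared by both proofs and not a gap of yours alone: the H\"older exponent actually delivered by De Giorgi--Nash--Moser/Lieberman with merely $L^\infty$ drift is some universal $\bar\bb\in(0,1)$ depending on $L,\nu,d$, so strictly speaking the conclusion is $m\in C^{\bb/2,\bb}$ for $\bb\in(0,\min(\bar\bb,\bb_0))$; you flag this correctly, and it has no effect on the subsequent use of the lemma.
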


\begin{proof}
    The distribution of agents $m$ satisfies
    the second-order parabolic FPK equation
    \eqref{eq:FPKM},
    which is supplemented with a $\bb_0$-Hölder
    continuous initial condition.
    Theorem $2.1$ section $V.2$ in \cite{MR0241822}
    states that $m$ is uniformly bounded by a constant
    which depends on $\norminf{m_0}$
    and $\norminf{H_p\lp\cdot,\nabla_xu,\mu\rp}$.
    This, \eqref{hypo:Hinvert} and \eqref{eq:boundLq0}
    yield that $mH_p\lp\cdot,\nabla_xu,\mu,m\rp$
    is bounded by a constant which depends on
    $\norminf{m_0}$, $\norminf{\nabla_xu}$
    and the constant of the assumptions.
    Finally, Theorem $6.29$ in \cite{MR1465184}
    yields that
    $m\in C^{\frac{\bb}2,\bb}\lp[0,T]\times\TT^d\rp$
    for $\bb\in(0,\bb_0)$,
    and its associated norm can be estimated from
    above by a constant which depends on
    $\norm{m_0}{C^{\bb_0}}$,
    $\norminf{\nabla_xu}$, $\bb$
    and the constants in the assumptions.

    We define
    $T_{\ee}=\inf\lp\lc t\in[0,T], \norminf{m(t)^{-1}}\leq\ee\rc\cup\lc T\rc\rp$,
    for $0<\ee<\norminf{m_0^{-1}}$.
    In particular $T_{\ee}$ is positive,
    since we proved in the latter paragraph
    that $m$ is continuous.
    On $[0,T_{\ee}]\times\TT^d$ we define the function $n$
    by $n=m^{-1}$, it satisfies the following partial differential
    equation in the sense of viscosity,
    \begin{equation*}
        \ptt n
        -\nu\Delta n
        -\divo\lp\aa n\rp
        +2\aa\cdot\nabla_xn
        =
        -2\nu\frac{\labs\nabla_xn\rabs^2}{n},
    \end{equation*}
    supplemented with the initial condition $n(0)=m_0^{-1}$,
    where $\aa(t,x)=-H_p\lp x,\nabla_xu(t,x),\mu(t)\rp$
    for $(t,x)\in[0,T]\times\TT^d$.
    We define $\nt$ as the unique weak solution
    of the following partial differential
    equation defined on $[0,T]\times \TT^d$,
    \begin{equation}
        \label{eq:PDEnt}
        \ptt \nt
        -\nu\Delta \nt
        -\divo\lp\aa \nt\rp
        +2\aa\cdot\nabla_x\nt
        =
        0,
    \end{equation}
    supplemented with the initial condition $\nt(0)=m_0^{-1}$.
    Theorem $2.1$ section $V.2$ in \cite{MR0241822}
    states that $\nt$ is bounded from above by
    a constant which depends on $\norminf{m_0^{-1}}$,
    $\norminf{\aa}$ and $T$.
    Moreover, $n$ is a subsolution of
    the restriction of \eqref{eq:PDEnt}
    to $[0,T_{\ee}]\times\TT^d$,
    with the same initial condition as $\nt$.
    Therefore, by a comparison argument for
    second-order parabolic equations in divergence form
    (Theorem $9.7$ in \cite{MR1465184} for instance),
    $n$ and $\nt$ satisfy $n\leq \nt$.
    This implies that there exists $C$ a positive constant
    independent of $T_{\ee}$,
    such that $\norminf{n}\leq C$.
    We conclude the proof by taking
    $\ee=2^{-1}C^{-1}$
    and recalling that $\norminf{\aa}$ can be estimated
    from above using \ref{hypo:Hinvert} and \eqref{eq:boundLq0}.
\end{proof}

\subsection{A priori estimates on derivatives of $u$}
\label{subsec:apriorigrad}
Bernstein methods are useful tools when
studying HJB equations or MFG systems.
They allow one to obtain a priori estimates on $\nabla_xu$
by considering the partial differential equations
satisfied by some well-chosen functions depending on
$u$ and $\nabla_xu$.
See for example
the video of the lecture
of P.L. Lions on November the $23$rd $2018$
\cite{Lions_video},
in which Bernstein estimates are derived
for MFG systems without interactions through controls.
More precisely, P.L. Lions used
the function defined by $\labs\nabla_xu\rabs^2e^{-\eta u}$,
for small $\eta$.
Here this method might work only
if we knew a uniform estimates on $\norminf{u}$
and if $q=2$.
After significant changes
in the latter method,
we can derive an estimate on $u$
which is weaker than the one for MFG
without interactions through controls.
Namely, we state that $\norminf{\nabla_xu}$
is bounded by a quantity that depends linearly
on $\norminf{u}$ by studying the functions $w$ and $\vp$
defined in \eqref{eq:defphi} below.
To our knowledge,
such estimates for systems of MFG
with nonlocal dependency on $\nabla_xu$
(or more generally for MFG systems
in which we do not have a uniform
a priori estimate on $u$)
are new in the literature.
We believe that this result may hold for more general
HJB equations with nonlocal dependency on
$\nabla_xu$.

\begin{lemma}
    \label{lem:Bernstein}
    Under assumptions
    \ref{hypo:Hp},
    \ref{hypo:gregx},
    \ref{hypo:Hx},
    \ref{hypo:Hpower},
    \ref{hypo:Hinvert}
    and
    \ref{hypo:Hcontrac},
    there exists $C>0$ depending only on the constants
    of the assumptions, such that
    \begin{equation}
        \label{eq:Bernstein}
        \norminf{\nabla_xu(t)}
        \leq
        C\lp 1+\max_{t\leq s\leq T}\norminf{u}\rp,
    \end{equation}
    for any $t\in[0,T]$.
\end{lemma}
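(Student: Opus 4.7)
The plan is a Bernstein-type argument adapting Lions's classical trick (which uses $\vp = |\nabla u|^2 e^{-\eta u}$ and gives a bound exponential in $\norminf{u}$, valid for quadratic Hamiltonians without control coupling) to produce the stronger linear-in-$\norminf{u}$ bound required here.

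First, I would differentiate the HJB equation \eqref{eq:HJBM} with respect to $x_k$, multiply by $u_{x_k}$, and sum over $k$ to derive the parabolic equation satisfied by $w := |\nabla_x u|^2$:
\begin{equation*}
-\ptt w - \nu \Delta w + H_p \cdot \nabla_x w + 2\nu |D^2 u|^2 + 2 H_x \cdot \nabla_x u = 0,
\end{equation*}
supplemented with the terminal condition $w(T,\cdot) = |\nabla_x g(\cdot, m(T))|^2 \leq C_0^2$ by \ref{hypo:gregx}. A direct maximum principle on $w$ alone is insufficient because, by \ref{hypo:Hx}, the source term $H_x \cdot \nabla_x u$ can be as large as $|\nabla u|^{q+1}$, which is of strictly higher degree than any coercive $|\nabla u|^q$ available from \ref{hypo:Hpower}.

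I would therefore introduce an auxiliary function $\vp$ combining $w$ with a suitable function of $u$. Since the target bound is linear in $\norminf{u}$, a natural candidate is $\vp = w \cdot \psi(u)$ with $\psi$ positive and such that $-\psi'(u)/\psi(u)$ is bounded and scales the coercive coefficient by a factor proportional to $1 + \norminf{u}$ at the max (e.g.\ $\psi(s) = (K - s)^{-\sigma}$ with $K > \max_{t \leq s \leq T}\norminf{u(s)} + 1$, or equivalently an additive variant $\vp = w + \Phi(u)$). Applying the maximum principle to $\vp$ on $[t,T] \times \TT^d$: either the max is attained at $s = T$, and \ref{hypo:gregx} yields the bound directly, or at an interior point $(s^\star, x^\star)$. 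At such an interior max, $\nabla\vp = 0$ expresses $\nabla w$ as a scalar multiple of $\nabla u$; combined with $\Delta\vp \leq 0$, $\ptt \vp = 0$, the $w$-equation above and the HJB identity $\ptt u + \nu \Delta u = H$, one obtains a pointwise algebraic inequality containing a coercive term proportional to $w(H_p \cdot \nabla u - H)$, bounded below through \ref{hypo:Hpower} by $C_0^{-1} w |\nabla u|^q$ minus contributions controlled by $\LL_{q_0}(\mu)^{q'}$.

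The $\LL_{q_0}(\mu)^{q'}$ contributions are then estimated by \eqref{eq:boundLq0q'inf} from Lemma \ref{lem:FPV}, which yields $\LL_{q_0}(\mu)^{q'} \leq C + C \norminf{\nabla_x u}^{q}$; the strict inequality $\ll_1 < (1 - \ll_0)^{q'}/C_0^{q'}$ in \ref{hypo:Hpower} ensures these can be absorbed on the coercive side. The dangerous $|\nabla u|^{q+1}$ term from $H_x \cdot \nabla_x u$ is absorbed into the coercive $|\nabla u|^q$ term via Young's inequality, at the cost of a factor $(1 + \norminf{u})$ coming from the weight $\psi$. Solving the resulting polynomial inequality in $|\nabla_x u(s^\star, x^\star)|$ yields the pointwise bound $|\nabla_x u(s^\star, x^\star)| \leq C(1 + \norminf{u})$, which combined with the terminal-time case gives \eqref{eq:Bernstein} for every $t \in [0,T]$. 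The main obstacle is precisely this $|\nabla u|^{q+1}$ term: since its degree exceeds that of the coercive term from \ref{hypo:Hpower}, it can only be absorbed if the coercive coefficient itself grows with $\norminf{u}$, which is why the weight $\psi$ in $\vp = w \psi(u)$ must be tuned to produce exactly a linear-in-$\norminf{u}$ factor at the max point. This refinement of the Lions $|\nabla u|^2 e^{-\eta u}$ trick is the key new technical idea and explains why the resulting bound is weaker than the uniform one available in standard MFG theory.
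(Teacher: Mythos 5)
Your high-level plan matches the paper's: weight $|\nabla_xu|^2$ by a function $\psi(u)$ whose logarithmic derivative $|\psi'|/\psi$ is of order $\norminf{u}^{-1}$, substitute the HJB equation to bring out $\nabla_xu\cdot H_p - H$ and apply \ref{hypo:Hpower}, then absorb the $H_x$ and $\ll_1\LL_{q_0}^{q'}$ contributions. But the places where you are vague are precisely where the argument is delicate, and both of your candidate weights would actually fail.

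The example $\psi(s)=(K-s)^{-\sigma}$ with $K>\norminf{u}+1$ breaks on three counts. First, $\psi'>0$, so the term $\frac{\psi'}{\psi}w[\nabla_xu\cdot H_p-H]$ has the wrong sign once you invoke \ref{hypo:Hpower}: instead of a damping term you get a source term, and no comparison principle gives a bound. Second, even after fixing the sign (say $\psi(s)=(K+s)^{-\sigma}$), with $K=\norminf{u}+O(1)$ the ratio $\sup|\psi'|/\psi$ to $\inf|\psi'|/\psi$ over the range of $u$ is $\sim\norminf{u}$, not $O(1)$. This kills two steps: the absorption of the $\ll_1\LL_{q_0}^{q'}$ term requires this ratio to be arbitrarily close to $1$ (since $\ll_1$ may be close to the critical threshold $(1-\ll_0)^{q'}/C_0^{q'}$ in \ref{hypo:Hpower}), and going from the pointwise bound at the max of $w\psi(u)$ to a bound on $\norminf{\nabla_xu}$ introduces the same ratio, degrading the conclusion from linear in $\norminf{u}$ to polynomial. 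You need $K\sim A\norminf{u}$ for a large absolute constant $A$, or a function engineered so that $|\psi'|/\psi$ is uniformly comparable to $\norminf{u}^{-1}$ with relative oscillation close to $1$; the paper uses the double-exponential $\vp(v)=\exp(\exp(-(a+b\norminf[-1]{u}v)))$ for exactly this reason. Third, the derivation of the PDE for $w\psi(u)$ produces a term $-\nu\frac{\psi''\psi-2(\psi')^2}{\psi^3}(w\psi)^2$, and one needs $\psi''\psi-2(\psi')^2\geq 0$ (the paper's \eqref{eq:condphi}) for this to have a favorable sign; the Lions exponential $e^{-\eta s}$ fails this identically (it gives $-\eta^2e^{-2\eta s}<0$), which is another reason for the double-exponential, and $(K-s)^{-\sigma}$ satisfies it only when $\sigma\leq 1$.

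The claimed "equivalent additive variant $\vp=w+\Phi(u)$" is not equivalent and does not work: with an additive weight, the coercive term produced at the max is $\Phi'(u)[\nabla_xu\cdot H_p-H]\sim |\Phi'|\,|\nabla_xu|^q$, which is of strictly lower degree in $|\nabla_xu|$ than the bad term $|H_x\cdot\nabla_xu|\sim|\nabla_xu|^{q+1}$, so the pointwise inequality at the max point cannot bound $|\nabla_xu|$ from above. The multiplicative form gains a factor of $w=|\nabla_xu|^2$ and produces $\sim\norminf{u}^{-1}|\nabla_xu|^{q+2}$, which dominates. Finally, you cannot differentiate \eqref{eq:HJBM} in $x$ pointwise because Definition \ref{def:sol} only gives $u\in C^{1,2}$; the paper first mollifies $u^\delta=\rho^\delta\star u$, derives the inequality for $u^\delta$ with error terms $R^\delta,Q^\delta$, shows they vanish uniformly, and only then passes to the limit. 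This regularization is part of the proof, not a routine remark.
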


\begin{proof}
    In what follows, we only prove that \eqref{eq:Bernstein}
    holds for $t=0$, however the proof does not use
    additional information available at $t=0$
    (the initial condition on $m$ for example),
    so it can be repeated for any $t\in[0,T]$
    and the constant $C$ in \eqref{eq:Bernstein}
    does not depend on $t$.

    Here we wish to differentiate \eqref{eq:HJBM}
    with respect to $x$;
    however we did not assume in Definition
    \ref{def:sol} enough regularity on $u$ for
    such an operation to have sense pointwisely
    on $(0,T)\times\TT^d$.
    Especially the time derivative of $\nabla_xu$
    and the third derivatives of $u$ with respect to $x$
    are not required to exist.
    This leads us to introducing
    $\rho\in C^{\infty}\lp[-\frac12,\frac12)^d \rp$
    a non-negative mollifier
    such that $\rho(x)=0$ if $|x|\geq \frac14$ 
    and $\int_{\RR^d}\rho(x)dx=1$.
    We introduce
    $\rho^{\dd}=\dd^{-d}\rho\lp \frac{\cdot}{\dd}\rp$
    and 
    $u^{\dd}(t)=\rho^{\dd}\star u(t)$,
    for any $0<\dd<1$ and $t\in [0,T]$,
    where $\star$ denotes the convolution operator.
    
    Thus $u^{\dd}$ depends smoothly on the state variable
    and its partial derivatives in space
    at any order have the same regularity in time as $u$,
    moreover it solves
    the following partial differential equation
    with final condition,
\begin{equation}
    \label{eq:ueps}
\left\{
\begin{aligned}
    &-\ptt u^{\dd}(t,x)
    - \nu\Delta u^{\dd}(t,x)
    + \rho^{\dd}\star
    \lp H(\cdot, \nabla_xu(t,\cdot),\mu(t))\rp(x) 
    =
    0
    &\text{ in } (0,T)\times\TT^d,\\
    &u^{\dd}(T,x)
    =
    \rho^{\dd}\star
    \lp g(\cdot,m(T,\cdot))\rp(x) 
    &\text{ in } \TT^d,\\
\end{aligned}
\right.
\end{equation}
    Let us take the gradient with respect to 
    the state variable of the latter equation
    and the scalar product
    of the resulting equality
    with $\nabla_x u^{\dd}$,
\begin{multline}
\label{eq:DiffHJB}
    -\frac12\ptt \labs \nabla_x u^{\dd}\rabs^2 
    -\nu\nabla_x u^{\dd} \cdot  \Delta \lp\nabla_x u^{\dd}\rp 
    + \nabla_x u^{\dd}\cdot D^2_{x,x} u^{\dd} H_p\lp x,\nabla_x u^{\dd}, \mu\rp
    \\
    + \nabla_x u^{\dd}\cdot  H_x^{\dd}\lp x,\nabla_x u, \mu\rp 
    =
    \nabla_x u^{\dd}\cdot R^{\dd}(t,x),
\end{multline}
    where $H^{\dd}$ and $R^{\dd}$ are defined by
    \begin{align*}
        &H^{\dd}(x,p,\mu)
        =
        \rho^{\dd}\star
        \lp H(\cdot, p(\cdot),\mu)\rp(x),
        \\
        &R^{\dd}(t,x)
        =
        D_{x,x}^2u^{\dd}H_p\lp x,\nabla_x u^{\dd},\mu\rp
        -\rho^{\dd}\star \lp D^2_{x,x}uH_p\lp\cdot,\nabla_x u,\mu\rp\rp. 
    \end{align*}
By simple calculus, we notice that
\begin{align*}
    \nabla_x \labs \nabla_x u^{\dd}\rabs^2
    &=
    2 D^2_{x,x} u^{\dd}
    \nabla_xu^{\dd},
    \\
    \Delta  \labs \nabla_x u^{\dd}\rabs^2
    &=
    2\nabla_x u^{\dd}\cdot \Delta \lp \nabla_x u^{\dd}\rp
    + 2\labs D^2_{x,x}u^{\dd}\rabs^2,
\end{align*}
that we can combine with \eqref{eq:DiffHJB}
and obtain
\begin{multline}
\label{eq:DiffHJB2}
    -\frac12\ptt \labs \nabla_x u^{\dd}\rabs^2
    -\frac{\nu}2\Delta \labs\nabla_x u^{\dd}\rabs^2
    + \nu\labs D^2_{x,x}u^{\dd}\rabs^2
    + \frac12\nabla_x\labs \nabla_x u^{\dd}\rabs^2
    \cdot H_p\lp x,\nabla_x u^{\dd}, \mu\rp \\
    =
    - \nabla_x u^{\dd}\cdot H^{\dd}_x\lp x, \nabla_x u, \mu\rp 
    +\nabla_x u^{\dd}\cdot R^{\dd}(t,x).
\end{multline}
We define the functions $\vp$ and $w^{\dd}$ by
\begin{equation}
\label{eq:defphi}
    \begin{aligned}
        &\vp(v)
        =
        \exp\lp\exp\lp
        -\lp a +b\norminf[-1]{u}v\rp\rp\rp,
        \text{ for } |v|\leq \norminf{u},
        \\ 
        &w^{\dd}(t,x)
        =
        \vp(u^{\dd}(T-t,x))\labs \nabla_x u^{\dd}\rabs^2(T-t,x),
    \end{aligned}
\end{equation}
    where $a>1$ and $b>0$ are constants that will be defined below.
    The derivatives of $\vp$
    are given by
    \begin{equation}
        \label{eq:diffphi}
        \begin{aligned}
        &\vp'(v)
        =
        -b\norminf[-1]{u}
        e^{-\lp a +b\norminf[-1]{u}v\rp}
        \vp(v),
        \\
        &\vp''(v)
        =
        b^2\norminf[-2]{u}
        e^{-\lp a +b\norminf[-1]{u}v\rp}
        \lp1+e^{-\lp a +b\norminf[-1]{u}v\rp}\rp
        \vp(v),
        \end{aligned}
    \end{equation}
    which implies that $\vp$ and $\vp'$ satisfy,
    \begin{equation}
        \label{eq:boundphi}
        \begin{aligned}
            &1
            \leq
            \vp(v)
            \leq e^{e^{-a+b}},
            \\
            &b\norminf[-1]{u}
            e^{-a-b}
            \leq
            \frac{\labs\vp'(v)\rabs}{\vp(v)} 
            \leq
            b\norminf[-1]{u}
            e^{-a+b}.
        \end{aligned}
    \end{equation}
    Roughly speaking, we introduced $a$ and $b$
    in order to have
    $\norminf{\vp}\norminf{\vp^{-1}}$
    and $\norminf{\vp'}\norminf{(\vp')^{-1}}$
    as close as possible to $1$.
    This will be achieved by 
    taking $a$ large enough,
    and $b$ small enough.

    For simplicity of the notations,
    we will omit to write the argument
    of $\vp$ since it is always $u^{\dd}$.

    The derivatives of $w^{\dd}$
    verify the following equalities,
    \begin{align*}
        -&\vp\ptt\labs\nabla_xu^{\dd}\rabs^2
        =
        \ptt w^{\dd} 
        +\frac{\vp'}{\vp}w^{\dd}\ptt u^{\dd},
        \\
        &\vp\nabla_x\labs\nabla_xu^{\dd}\rabs^2
        =
        \nabla_x w^{\dd} 
        -\frac{\vp'}{\vp}w^{\dd}\nabla_xu^{\dd},
        \\
        &\vp\Delta\labs\nabla_xu^{\dd}\rabs^2
        =
        \Delta w^{\dd} 
        -\frac{\vp'}{\vp}w^{\dd}\Delta u^{\dd}
        +2\frac{\vp'}{\vp}\nabla_xw^{\dd}\cdot\nabla_xu^{\dd}
        -\frac{\vp''\vp-2\lp\vp'\rp^2}{\vp^3}\lp w^{\dd}\rp^2.
    \end{align*}
    We multiply \eqref{eq:DiffHJB2} by $2\vp$ and
    use the latter equalities in the resulting relation,
    \begin{multline}
        \label{eq:firstPDEw}
        \ptt w^{\dd} -\nu\Delta w^{\dd}
        + \nabla_xw^{\dd}\cdot  H_p\lp x,\nabla_x u^{\dd}, \mu\rp 
        -2\nu\frac{\vp'}{\vp} \nabla_xw^{\dd}\cdot \nabla_xu^{\dd} 
        +2\nu\vp\labs D^2_{x,x}u^{\dd}\rabs^2
        \\
        =
        \frac{\vp'}{\vp}w^{\dd}\lb
        -\ptt u^{\dd}-\nu\Delta u^{\dd}
        +\nabla_xu^{\dd}\cdot H_p\lp x,\nabla_xu^{\dd},\mu\rp\rb
        -\nu\frac{\vp''\vp-2\lp\vp'\rp^2}{\vp^3}\lp w^{\dd}\rp^2
        \\
        -2\vp\nabla_xu^{\dd}\cdot H_x^{\dd}\lp x,\nabla_xu,\mu\rp
        +2\vp\nabla_x u^{\dd}\cdot R^{\dd}(t,x).
    \end{multline}
    We can rewrite the first line of \eqref{eq:ueps} in
    the following way,
    \begin{equation*}
        -\ptt u^{\dd}
        - \nu\Delta u^{\dd}
        =
        -H\lp x,\nabla_xu^{\dd},\mu\rp
        -Q^{\dd},
    \end{equation*}
    where $Q^{\dd}$ is defined by,
    \begin{equation*}
        Q^{\dd}(t,x)
        =
        H^{\dd}\lp x , \nabla_x u(t), \mu(t)\rp
        -H\lp x,\nabla_xu^{\dd}(t,x) ,\mu(t)\rp.
    \end{equation*}
    This and \eqref{eq:firstPDEw} imply that
    \begin{multline}
        \label{eq:ubound_PDEw}
        \ptt w^{\dd} -\nu\Delta w^{\dd}
        + \nabla_xw^{\dd}\cdot  H_p\lp x,\nabla_x u^{\dd}, \mu\rp 
        -2\nu\frac{\vp'}{\vp} \nabla_xw^{\dd}\cdot \nabla_xu^{\dd} 
        +2\nu\vp\labs D^2_{x,x}u^{\dd}\rabs^2
        \\
        =
        \frac{\vp'}{\vp}w^{\dd}\lb
        \nabla_xu^{\dd}\cdot H_p\lp x,\nabla_xu^{\dd},\mu\rp
        -H\lp x,\nabla_xu^{\dd},\mu\rp\rb
        -\nu\frac{\vp''\vp-2\lp\vp'\rp^2}{\vp^3}\lp w^{\dd}\rp^2
        \\
        -2\vp\nabla_xu^{\dd}\cdot H_x^{\dd}\lp x,\nabla_xu,\mu\rp
        +2\vp\nabla_x u^{\dd}\cdot R^{\dd}(t,x)
        -\vp'\labs\nabla_xu^{\dd}\rabs^2Q^{\dd}(t,x).
    \end{multline}
    In the following we will estimate from above
    the right-hand side of the latter expression.
    We notice that the second term
    of the right-hand side is negative since
    \begin{equation}
        \label{eq:condphi}
        \vp''\vp-2\lp\vp'\rp^2
        \geq
        0.
    \end{equation}
    We notice that
    $R^{\dd}$ and $Q^{\dd}$
    are uniformly convergent to $0$
    as $\dd$ tends to $0$,
    so we can assume that,
    \begin{equation}
        \label{eq:RQlow}
        \norminf{R^{\dd}}
        +\norminf{Q^{\dd}}
        \leq 
        \frac{\e}
        {2e\norminf{\nabla_xu}
        +\norminf{\vp'}
        \norminf[2]{\nabla_xu}},
    \end{equation}
    for $\dd$ small enough and
    depending on $\ee>0$.

    The first term in the last line of \eqref{eq:ubound_PDEw}
    can be bounded using \ref{hypo:Hx},
    \begin{equation}
        \label{eq:boundHx}
        -2\vp\nabla_xu^{\dd}\cdot H_x^{\dd}\lp x,\nabla_xu,\mu\rp
        \leq
        2C_0\vp\norminf{\nabla_xu^{\dd}}
        \lp 1+
        \norminf[q]{\nabla_xu^{\dd}}
        +\LL_{q_0}\lp\mu(t)\rp^{q'}\rp.
    \end{equation}
    In fact we are going to use the latter inequality 
    to obtain \eqref{eq:ubound_PDIw} below, only by noticing that
    using \eqref{eq:boundLq0q'inf},
    the right-hand side involves only terms with exponents
    in $\norminf{w^{\dd}}$
    or $\norminf{w^{0}}$
    not larger than $\frac{1+q}2$.

    Then we use \ref{hypo:Hpower}
    on the first term
    of the right-hand side of \eqref{eq:ubound_PDEw}
    since $\vp'<0$,
    \begin{equation*}
        \frac{\vp'}{\vp}w^{\dd}\lb
        \nabla_xu^{\dd}\cdot H_p\lp x,\nabla_xu^{\dd},\mu\rp
        -H\lp x,\nabla_xu^{\dd},\mu\rp\rb
        \leq
        -C_0^{-1}
        \frac{\vp'}{\vp^{1+\frac{q}2}}
        \lp w^{\dd}\rp^{1+\frac{q}2}
        +C_0\frac{\labs\vp'\rabs}{\vp}w^{\dd}
        +C_0^{-1}\ll_1\frac{\labs\vp'\rabs}{\vp}w^{\dd}
        \LL_{q_0}\lp\mu(t)\rp^{q'}.
    \end{equation*}
    The term involving $-\lp w^{\dd}\rp^{1+\frac{q}2}$
    is a key element in this proof.
    On the one hand, it will allow us to cancel the term in 
    $w^{\dd}\LL_{q_0}\lp\mu(t)\rp^{q'}$.
    On the other hand, we will
    use the fact that it has a larger exponent
    than any of the remaining terms.

    From \eqref{eq:boundLq0q'inf} and \eqref{eq:boundphi},
    we obtain
    \begin{equation*}
        \frac{\labs\vp'\rabs}{\vp}
        \LL_{q_0}\lp\mu(t)\rp^{q'}
        \leq
        b\norminf[-1]{u}e^{-a+b}
        \frac{C_0^{q'}}{(1-\ll_0)^{q'}}
        \lp \th^{1-q'}
        +(1-\th)^{1-q'}
        \norminf[\frac{q}2]{w^0}\rp,
    \end{equation*}
    where $\th\in(0,1)$ will be defined below.
    Then \eqref{eq:boundphi} implies,
    \begin{equation*}
        \frac{\labs\vp'\rabs}{\vp^{1+\frac{q}2}}
        \geq
        b\norminf[-1]{u}e^{-a-b}e^{-\frac{q}2e^{-a+b}}
    \end{equation*}
    Combining the latter six inequalities,
    \eqref{eq:ubound_PDEw},
    and the fact that $\norminf{w^{\dd}}\leq\norminf{w^{0}}$,
    we obtain the following partial differential
    inequality,
    \begin{multline}
        \label{eq:ubound_PDIw}
        \ptt w^{\dd} -\nu\Delta w^{\dd}
        + \nabla_xw^{\dd}\cdot  H_p\lp x,\nabla_x u^{\dd}, \mu\rp 
        -2\nu\frac{\vp'}{\vp} \nabla_xw^{\dd}\cdot \nabla_xu^{\dd} 
        \\
        \leq
        -C_0^{-1}
        b\norminf[-1]{u}e^{-a-b}e^{-\frac{q}2e^{-a+b}}
        \lp w^{\dd}\rp^{1+\frac{q}2}
        +b\norminf[-1]{u}e^{-a+b}
        \frac{\ll_1C_0^{q'-1}}{(1-\th)^{1-q'}(1-\ll_0)^{q'}}
        \norminf[1+\frac{q}2]{w^0}
        \\
        +\ee
        +C_{a,b,\th}\lp1+\norminf[-1]{u}\rp
        \lp 1+
        \norminf[\frac{1+q}2]{w^0}\rp
    \end{multline}
    where $C_{a,b,\th}$ is a positive constant which only depends
    on the constants in the assumptions and in $(a,b,\th)$.
    We systematically used the inequality
    $\norminf[r]{w^0}\leq 1+\norminf[\frac{1+q}2]{w^0}$
    on every term of the form
    $\norminf[r]{w^0}$ with $0<r<\frac{1+q}2$.

    Let us mention the following result:
    the function $y^+$
    defined by $y^+=\max\lp y_0,K^{-\frac1k}\norminf[\frac1k]{f}\rp$
    is a super-solution of the following differential equation,
    \begin{equation*}
        \lc
        \begin{aligned}
            y'(t)
            &=
            -Ky(t)^k
            +f(t)
            \\
            y(0)
            &=
            y_0
        \end{aligned}
    \right.
    \end{equation*}
    posed on $[0,T]$,
    where $k$ and $y_0$ are positive constants and
    $f$ is a bounded positive function.

    This and $\norminf{w(0)}\leq eC_0^2$ 
    which comes from \ref{hypo:gregx}
    and \eqref{eq:boundphi},
    yield that a super-solution to \eqref{eq:ubound_PDIw}
    is given by
    \begin{equation*}
        \lb
        \frac{\ll_1C_0^{q'}e^{2b}e^{\frac{q}2e^{-a+b}}}
        {(1-\th)^{q'-1}(1-\ll_0)^{q'}}\norminf[1+\frac{q}2]{w^0}
        +C_{a,b,\th}\lp 1+ \norminf{u}\rp
        \lp 1+\norminf[\frac{1+q}2]{w^0}+\ee\rp
        \rb^{\frac{2}{2+q}},
    \end{equation*}
    where we replace $C_{a,b,\th}$ with 
    $C_{a,b,\th}+\lp eC_0^2\rp^{1+\frac{q}2}$.

    From a comparison argument for parabolic second-order equation,
    $w^{\dd}$ is not larger than the latter expression.
    This result holds for $w^0$ by letting $\dd$ and $\ee$ tend to $0$,
    thus $w^0$ verifies the following inequality,
    \begin{equation*}
        \norminf[1+\frac{q}2]{w^0}
        \leq
        \frac{\ll_1C_0^{q'}e^{2b}e^{\frac{q}2e^{-a+b}}}
        {(1-\th)^{q'-1}(1-\ll_0)^{q'}}
        \norminf[1+\frac{q}2]{w^0}
        +C_{a,b,\th}\lp 1+ \norminf{u}\rp
        \lp 1+\norminf[\frac{1+q}2]{w^0}\rp.
    \end{equation*}
    By \ref{hypo:Hpower},
    we can choose $a>1$ large enough,
    $b>0$ and $\th\in(0,1)$ small enough
    such that 
    $\frac{\ll_1C_0^{q'}e^{2b}e^{\frac{q}2e^{-a+b}}}
    {(1-\th)^{q'-1}(1-\ll_0)^{q'}}<1$.
    This implies
    \begin{equation}
        \label{eq:boundw}
        \norminf[1+\frac{q}2]{w^0}
        \leq
        C_{a,b,\th}\lp 1+ \norminf{u}\rp
        \lp 1+\norminf[\frac{1+q}2]{w^0}\rp,
    \end{equation}
    where we increased $C_{a,b,\th}$ into 
    $\lp1-\frac{\ll_1C_0^{q'+1}e^{2b}e^{\frac{q}2e^{-a+b}}}
    {(1-\th)^{q'-1}(1-\ll_0)^{q'}}\rp^{-1}
    C_{a,b,\th}$.

    We make out two cases:
    the first case is when
    $\norminf[\frac12]{w^0}\leq 2C_{a,b,\th}\lp 1+\norminf{u}\rp$.
    The second case is when 
    $\norminf[\frac12]{w^0}> 2C_{a,b,\th}\lp 1+\norminf{u}\rp$.
    In the latter case,
    \eqref{eq:boundw} implies that
    $\norminf[1+\frac{q}2]{w^0}
    \leq\frac12\norminf[\frac12]{w^0}
    \lp1+
    \norminf[\frac{1+q}2]{w^0}\rp$,
    which implies that $\norminf{w^0}\leq 1$.
    Therefore, in any of the two latter cases
    we obtain
    \begin{equation*}
        \norminf[\frac12]{w^0}
        \leq
        1+2C_{a,b,\th}\lp 1+\norminf{u}\rp.
    \end{equation*}
    This and \eqref{eq:boundphi}
    yield \eqref{eq:Bernstein} when $t=0$,
    this concludes the proof.
\end{proof}

Now, we can combine the estimates obtained in this section
with classical results on parabolic second-order equations
and get further estimates of $u$ and its derivatives and
on $m$.

\begin{lemma}
    \label{lem:ubound}
    Assume
    \ref{hypo:Hp},
    \ref{hypo:gregx},
    \ref{hypo:Hx},
    \ref{hypo:Hpower},
    \ref{hypo:Hinvert},
    \ref{hypo:Hcontrac}
    and
    \ref{hypo:Hlip}. 
    The function $u$ is in
    $C^{1+\frac{\bb}{2},2+\bb}
    \lp [0,T]\times\TT^d\rp$
    for any $\bb\in\lp 0,\bb_0^2\rp$, 
    where $\bb_0$ was introduced in the assumptions.
    Its $C^{1+\frac{\bb}{2},2+\bb}$-norm
    can be bounded by a quantity depending only on 
    $\norminf{u}$, $\bb$,
    and the constants in the assumptions.
\end{lemma}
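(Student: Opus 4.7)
The plan is a two-step bootstrap from the a priori estimates already obtained in Lemmas \ref{lem:Bernstein} and \ref{lem:mbounded}. From the first, $\norminf{\nabla_xu}\leq C\lp 1+\norminf{u}\rp$; from the second, $m$ is Hölder continuous on $[0,T]\times\TT^d$ with a positive lower bound. First, I would upgrade $\nabla_xu$ from merely bounded to Hölder continuous in $(t,x)$ via parabolic $L^p$ maximal regularity; then, after using Lemma \ref{lem:FPmu} to transfer that regularity to $\mu$, I would verify that the right-hand side of \eqref{eq:HJBM} is Hölder continuous and conclude via Schauder theory.

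For the first step, combining \ref{hypo:Hp}, \ref{hypo:Hh}, \ref{hypo:Hinvert} and \eqref{eq:boundLq0q'inf} shows that $(t,x)\mapsto H\lp x,\nabla_xu(t,x),\mu(t)\rp$ is bounded in $L^{\infty}\lp(0,T)\times\TT^d\rp$ by a quantity depending only on $\norminf{u}$ and the constants in the assumptions. Viewing \eqref{eq:HJBM} as a linear backward heat equation with bounded source and terminal datum in $C^{2+\bb_0}\lp\TT^d\rp$ from \ref{hypo:gregx}, parabolic $L^p$ maximal regularity gives $\ptt u$ and $D^2_{x,x}u$ in $L^p\lp(0,T)\times\TT^d\rp$ for every $p<\infty$, and the parabolic Sobolev embedding then yields $\nabla_xu\in C^{\gamma/2,\gamma}\lp[0,T]\times\TT^d;\RR^d\rp$ for every $\gamma\in(0,1)$, with norm controlled by $\norminf{u}$, $\gamma$ and the constants in the assumptions.

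For the second step, I would pick $\bb\in(0,\bb_0)$ and apply Lemma \ref{lem:FPmu} with $p=\nabla_xu\in C^{\bb/2,\bb}$ (from step one) and $m\in C^{\bb/2,\bb}$ (from Lemma \ref{lem:mbounded}) to deduce that $(t,x)\mapsto\aa^{\mu(t)}(x)$ lies in $C^{\bb\bb_0/2,\bb\bb_0}\lp[0,T]\times\TT^d;\RR^d\rp$. The main obstacle, and the very reason behind assumption \ref{hypo:Hlip}, is to show that the composite $(t,x)\mapsto H\lp x,\nabla_xu(t,x),\mu(t)\rp$ also lies in $C^{\bb\bb_0/2,\bb\bb_0}$. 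Spatial regularity is straightforward from \ref{hypo:Hp}, \ref{hypo:Hx}, the boundedness of $\nabla_xu$, and the bound on $\LL_{\infty}\lp\mu(t)\rp$ given by \eqref{eq:boundL}. Time regularity would be obtained by splitting the increment $H\lp x,\nabla_xu(t_1,x),\mu(t_1)\rp-H\lp x,\nabla_xu(t_2,x),\mu(t_2)\rp$ at the intermediate value $H\lp x,\nabla_xu(t_2,x),\mu(t_1)\rp$: the first piece is controlled by the local Lipschitz continuity of $H$ in $p$ on bounded sets (which follows from \ref{hypo:Hp} and \ref{hypo:Hx} via integration of $H_p$) together with the time Hölder continuity of $\nabla_xu$ just obtained, while the second piece is exactly what the first inequality of \ref{hypo:Hlip} bounds, using the time Hölder continuity of $m$ and of $\aa^{\mu}$.

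Once the source of \eqref{eq:HJBM} has been shown to lie in $C^{\bb\bb_0/2,\bb\bb_0}$ and the terminal datum lies in $C^{2+\bb_0}\subset C^{2+\bb\bb_0}$, classical Schauder estimates for linear parabolic equations (as in \cite{MR0241822} or \cite{MR1465184}) deliver $u\in C^{1+\bb\bb_0/2,2+\bb\bb_0}\lp[0,T]\times\TT^d\rp$ with norm controlled only by $\norminf{u}$, $\bb$, and the constants in the assumptions. Letting $\bb$ approach $\bb_0$, the product $\bb\bb_0$ reaches any prescribed value in $\lp 0,\bb_0^2\rp$, which is the claimed range of exponents in the statement.
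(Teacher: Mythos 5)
Your proof is correct and follows essentially the same structure as the paper's: bound $\nabla_xu$ by Lemma \ref{lem:Bernstein}, upgrade $\nabla_xu$ to parabolic Hölder regularity from the $L^\infty$-bounded source, invoke Lemma \ref{lem:mbounded} for $m$ and Lemma \ref{lem:FPmu} for $\aa^\mu$, verify the Hölder continuity of the source term using \ref{hypo:Hlip}, and finish with Schauder. The only (immaterial) difference is that the paper passes from bounded source to $C^{\frac12+\frac{\bb}2,1+\bb}$ regularity of $u$ via a single intermediate Hölder estimate (Theorem 6.48 in \cite{MR1465184}), whereas you route through parabolic $L^p$ maximal regularity and Sobolev embedding; both are standard and yield the same conclusion.
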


\begin{proof}
\noindent
Lemma \ref{lem:Bernstein} states that $\norminf{\nabla_xu}$
is bounded by a quantity which depends on $\norminf{u}$
and the constants in the assumptions.
So is $\LL_{q_0}(\mu)$ by \eqref{eq:boundL}.
Then $u$ is the solution of the heat equation 
with a right-hand side equal to $-H\lp x,\nabla_xu,\mu\rp$ 
which is bounded in $L^{\infty}$.
Classical results (see for example Theorem $6.48$ in \cite{MR1465184})
state that
for any $\bb\in(0,1)$,
the $C^{\frac12+\frac{\bb}2,1+\bb}$-norm of $u$ is bounded 
by a constant which depends 
on the $L^{\infty}$-norm of the right-hand side,
the terminal condition, and $\bb$.
%

Lemma \ref{lem:mbounded} yields that
$m$ is in $C^{\frac{\bb}2,\bb}\lp[0,T]\times\TT^d\rp$
for $\bb\in(0,\bb_0)$, is positive,
and that both its $C^{\frac{\bb}2,\bb}\lp[0,T]\times\TT^d\rp$-norm
and its lower bound depend
on $\norminf{u}$, $\norminf{m_0^{-1}}$,
$\bb$, and the constant of the assumptions.

Therefore, Lemma \ref{lem:FPmu} yields
that $\lb(t,x)\mapsto \aa^{\mu(t)}(x)\rb
\in C^{\frac{\bb\bb_0}2,\bb\bb_0}\lp[0,T]\times\TT^d;\RR^d\rp$.
    From \ref{hypo:Hp},
    $H$ is locally Lipschitz continuous with respect to $(x,p)$.
    This and \ref{hypo:Hlip} imply that
    $\lb(t,x)\mapsto H\lp t,\nabla_xu\lp t,x\rp,\mu(t)\rp\rb
    \in C^{\frac{\bb\bb_0}2,\bb\bb_0}\lp[0,T]\times\TT^d\rp$.
    Thus $u$ is the solution of the backward heat equation
    with a right-hand side in
    $C^{\frac{\bb\bb_0}{2},\bb\bb_0}$
    supplemented with terminal condition in $C^{2+\bb_0}$.
    Classical results
    (see for instance Theorem $4.9$ in \cite{MR1465184})
    yield that $u$ is in
    $C^{1+\frac{\bb\bb_0}{2},2+\bb\bb_0}$, 
    and its $C^{1+\frac{\bb\bb_0}{2},2+\bb\bb_0}$-norm
    depends on $\normc{g(\cdot,m(T))}{2+\bb_0}$ 
    and the $C^{\frac{\bb\bb_0}{2},\bb\bb_0}$-norm
    of the right-hand side.
    We recall that $\bb$ is any constant in $(0,\bb_0)$.
    The proof of the lemma is complete.

    Following precisely the dependencies
    in the above estimates,
    we obtain that the 
    $C^{1+\frac{\bb\bb_0}{2},2+\bb\bb_0}$ norm of $u$ 
    can be estimated from above by a constant which depends
    on $\norminf{u}$, $\norminf{m_0^{-1}}$,
    $\bb$, and the constants in the assumptions.
\end{proof}
The conclusions of Lemmas
\ref{lem:MPu},
\ref{lem:mbounded},
\ref{lem:Bernstein}
and
\ref{lem:ubound}
are summarized
in Lemma \ref{lem:MFGCapriori}.

\section{Existence and uniqueness results
under additional assumptions}
\label{sec:ExiUni}

\subsection{Solving the MFGC systems for $M<\infty$}
\begin{lemma}
    \label{lem:existM}
    Under assumptions
    \ref{hypo:Hp}-\ref{hypo:mreg},
    \ref{hypo:Hx},
    \ref{hypo:Hpower},
    \ref{hypo:Hinvert},
    \ref{hypo:Hcontrac},
    \ref{hypo:Hlip}
    and $M\in(0,\infty)$,
    there exists
    at least one solution
    to \eqref{eq:MFGCM}.
\end{lemma}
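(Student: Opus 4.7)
The plan is to invoke Schauder's fixed point theorem on a suitably chosen convex set of flows of measures. Since the truncation $T_M$ in \eqref{eq:defmuM} forces any candidate fixed point to satisfy $\LL_\infty(\mu(t)) \leq M$ for all $t$, I would look for the fixed point in
$$K = \bigl\{\mu\in C^0\bigl([0,T];\cP(\TT^d\times\RR^d)\bigr) : \mu(t)\in\cP_{\infty,M}(\TT^d\times\RR^d) \text{ for all } t\in[0,T]\bigr\},$$
equipped with the topology of uniform weak-$*$ convergence; this is a convex set on which Arzelà--Ascoli-type compactness is available. The self-map $\Phi:K\to K$ is built in three steps: (i) given $\mu\in K$, solve the HJB equation \eqref{eq:HJBM} to obtain $u^\mu$; (ii) solve the FPK equation \eqref{eq:FPKM} with drift $-H_p(\cdot,\nabla_x u^\mu,\mu)$ to obtain $m^\mu$; (iii) apply Lemma \ref{lem:FPV} at each time $t$ to $(\nabla_x u^\mu(t),m^\mu(t))$, with truncation level $M$, to define $\Phi(\mu)(t)$.

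For step (i), the bound $\LL_\infty(\mu)\leq M$ makes the growth quantities in \ref{hypo:Hh}--\ref{hypo:Hpower} and \ref{hypo:Hinvert} uniform over $\mu\in K$. In particular, $H(x,0,\mu)$ is uniformly bounded, so Lemma \ref{lem:MPu} gives an $L^\infty$ bound on $u^\mu$ independent of $\mu\in K$; Lemma \ref{lem:Bernstein} then bounds $\norminf{\nabla_x u^\mu}$ uniformly. Combining these with Lemma \ref{lem:ubound}, one gets $u^\mu\in C^{1+\bb/2,2+\bb}([0,T]\times\TT^d)$ with a uniform norm bound, the existence of such a classical solution following from a standard continuation argument based on the a priori estimates. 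For step (ii), the drift is uniformly Hölder continuous and bounded, so a unique classical solution $m^\mu\in C^{\bb/2,\bb}$ exists, and Lemma \ref{lem:mbounded} gives it a uniform lower bound. Step (iii) is well-posed by Lemma \ref{lem:FPV}; Lemma \ref{lem:distmu} together with the time-Hölder regularity of $\nabla_x u^\mu$ and $m^\mu$ then shows that $t\mapsto\Phi(\mu)(t)$ is Hölder continuous in time with a Hölder constant independent of $\mu\in K$.

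These uniform estimates imply that $\Phi(K)$ is relatively compact in $K$ by Arzelà--Ascoli applied to the controls $\aa^{\Phi(\mu)}$, which are uniformly bounded by $M$ and uniformly Hölder in $(t,x)$. For continuity of $\Phi$, I would take $\mu_n\to\mu$ in $K$, use the uniform Hölder bounds to extract convergent subsequences of $u^{\mu_n}$ and $m^{\mu_n}$, and pass to the limit in \eqref{eq:HJBM}--\eqref{eq:FPKM} using the continuity of $H$ and $H_p$ with respect to $\mu$ on $\cP_{\infty,M}$ provided by \ref{hypo:Hp}. Uniqueness of $u^\mu$ and $m^\mu$ (by a standard comparison argument for the HJB equation and by linearity of the FPK equation) promotes subsequential convergence to full-sequence convergence, and part (ii) of Lemma \ref{lem:FPV} then yields $\Phi(\mu_n)(t)\to\Phi(\mu)(t)$ weakly-$*$ at each $t$, upgraded to uniform-in-$t$ convergence by equicontinuity. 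Schauder's theorem then produces $\mu^*\in K$ with $\Phi(\mu^*)=\mu^*$, and $(u^{\mu^*},m^{\mu^*},\mu^*)$ is the desired solution. The main technical difficulty I anticipate is the simultaneous passage to the limit in the nonlocal Hamiltonian $H(\cdot,\nabla_x u^{\mu_n},\mu_n)$ as both $\mu_n$ and $\nabla_x u^{\mu_n}$ vary: this is precisely where the strong Hölder compactness provided by the a priori estimates of Section \ref{sec:apriori} becomes essential.
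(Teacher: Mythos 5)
You take a Schauder approach, as the paper does, but you iterate directly on the flow of joint measures $\mu$, whereas the paper iterates on $(u,m) \in C^{0,1}([0,T]\times\TT^d;\RR) \times C^0([0,T];\cP(\TT^d))$, producing $\mu^M$ from $(u,m)$ via Lemma \ref{lem:FPV} and then $(u^M,m^M)$ from $\mu^M$. That change of ordering is not cosmetic, and as written your version has a genuine gap. Schauder's theorem requires a convex set, and your $K$ is convex only because it allows arbitrary $\mu(t)\in\cP_{\infty,M}(\TT^d\times\RR^d)$. But the growth and contraction assumptions \ref{hypo:Hh}--\ref{hypo:Hpower}, \ref{hypo:Hinvert}--\ref{hypo:Hcontrac}, on which the a priori estimates of Lemmas \ref{lem:MPu}, \ref{lem:Bernstein} and \ref{lem:mbounded} rest, are stated only for $\mu\in\cP_m(\TT^d\times\RR^d)$, i.e.\ for measures supported on the graph of a map $\aa^\mu$ over $\TT^d$; \ref{hypo:Hcontrac} in particular is not even well posed for a $\mu$ outside that class, since $\aa^{\mu^i}$ is then undefined. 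The set of graph measures is not convex inside $\cP_{\infty,M}$ (a convex combination of two graphs is in general not a graph), so you cannot both restrict $K$ to them and keep $K$ convex, nor leave $K$ as written and still invoke the assumptions. The paper's choice to iterate on $(u,m)$, so that $\mu$ only ever arises through Lemma \ref{lem:FPV} and is automatically of the form $(I_d,\aa)\#m$, sidesteps this; a repair of your scheme would be to iterate on pairs $(m,\aa)$ rather than on $\mu$ itself, since those pairs do live in a convex set.

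Two smaller points. The terminal condition $u(T,\cdot)=g(\cdot,m^M(T))$ in \eqref{eq:CFuM} depends on $m^M(T)$, which in your scheme is the output of step (ii) and is not available when you solve the HJB in step (i); you would need to use instead $g$ evaluated at the first marginal of $\mu(T)$, which coincides with the correct terminal data at a fixed point, and say so explicitly. Also, for a generic $\mu\in K$ that is merely continuous in time, the source term $H(\cdot,\nabla_x u^\mu(t,\cdot),\mu(t))$ need not be H\"older in $t$, so the $C^{1+\frac{\bb}{2},2+\bb}$ regularity you claim for $u^\mu$ along the iteration is overstated; only the $C^{\frac12+\frac{\bb}{2},1+\bb}$ bound is available (which is exactly what the paper establishes for the iterates, deferring the full $C^{1,2}$ regularity to the fixed point), but that weaker regularity is all the iteration actually needs.
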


\begin{proof}
    For $(u,m)\in
    C^{0,1}\lp [0,T]\times\TT^d;\RR\rp
    \times C^0\lp[0,T];\cP\lp\TT^d\rp \rp$,
    we define
    $\mu^M\in C^0\lp [0,T];\cP \lp\TT^d\times\RR^d\rp\rp$ by
    \begin{equation*}
        \mu^M(t) 
        =
        \lb I_d,
        T_M\lp -H_p\lp \cdot,\nabla_xu(t,\cdot),\mu^M(t)\rp\rp
        \rb{\#}m(t)
        \text{ in } [0,T],
    \end{equation*}
    using Lemma \ref{lem:FPmu}.
    Then we define $u^M$
    as the viscosity solution
    of the following backward
    HJB equation
    with a final condition,
    \begin{equation}
        \label{eq:existM_defuM}
        \lc
        \begin{aligned}
            &-\ptt u^M(t,x)
            - \nu\Delta u^M(t,x)
            +H(x, \nabla_xu^M(t,x),\mu^M(t))
            =
            0
            \\
            &u^M(T,x)
            =
            g(x,m(t)).
        \end{aligned}
        \right.
    \end{equation}
    We can rewrite the first line of the latter
    system in the following way,
    \begin{equation*}
        -\ptt u^M
        - \nu\Delta u^M
        + \nabla_xu^M
        \cdot \int_0^1 
        H_p\lp x, s\nabla_x u^M,\mu^M(t)\rp ds
        =  
        - H\lp x, 0,\mu^M(t)\rp,
    \end{equation*}
    where the right-hand side
    is bounded
    using $\LL_{\infty}\lp\mu(t)\rp\leq M$,
    \ref{hypo:Hh} and \eqref{eq:Jensen}.
    The maximum principle 
    for second-order parabolic equation
    provides that $u^M$ is bounded.
    Here, the proof of Lemma \ref{lem:Bernstein}
    can be repeated to prove that $\norminf{\nabla_xu}$
    is bounded by a constant which depends on $M$ and
    the constants in the assumptions.
    Then with the same argument as in Lemma \ref{lem:ubound},
    $u^M$ is bounded in $C^{\frac12+\frac{\bb}2,1+\bb}$-norm,
    for all $\bb\in(0,1)$.

    We define $m^M$ as the solution in the sense of distributions
    of the following Fokker-Planck-Kolmogorov equation
    with an initial condition,
    \begin{equation*}
        \lc
        \begin{aligned}
            & \ptt m^M_t(t,x)
            - \nu\Delta m^M(t,x)
            +\divo\lp b(t,x)m^M\rp
            =
            0
            &\text{ in } (0,T)\times\TT^d,\\
            &m^M(0)=m_0,
        \end{aligned}
        \right.
    \end{equation*}
    with $b(t,x)=-H_p\lp x,\nabla_xu^M(t,x),\mu^M(t)\rp$
    which is a continuous function with respect to $(t,x)$.
    Using the same arguments as in Lemma \ref{lem:mbounded},
    we get that $m\in C^{\frac{\bb}2,\bb}\lp\TT^d;\RR\rp$
    for $\bb\in(0,\bb_0)$.

    Moreover,
    $\norm{u}{C^{\frac12+\frac{\bb}2,1+\bb}}$,
    $\norm{m}{C^{\frac{\bb}2,\bb}}$
    are bounded by a constant which depends on
    $M$, $\bb$ and the constants in the assumptions.
    The map $(u,m)\mapsto\mu^M$ is continuous 
    from $C^{0,1}\lp [0,T]\times\TT^d;\RR\rp
    \times C^0\lp [0,T];\cP\lp \TT^d\rp\rp$
    to $C^0\lp [0,T];\cP\lp \TT^d\times\RR^d\rp\rp$
    by Lemma \ref{lem:FPV}.
    The map $\lp m,\mu^M\rp\mapsto u^M$ is continuous from
    $C^0\lp [0,T];\cP\lp \TT^d\rp\rp
    \times C^0\lp [0,T];\cP\lp \TT^d\times\RR^d\rp\rp$
    to $C^{0,1}\lp [0,T]\times\TT^d;\RR\rp$
    by the stability of the solutions of viscosity.
    The map $\lp u^M,\mu^M\rp\mapsto m^M$
    is continuous from
    $C^{0,1}\lp [0,T]\times\TT^d;\RR\rp
    \times C^0\lp [0,T];\cP\lp \TT^d\times\RR^d\rp\rp$
    to $C^0\lp [0,T];\cP\lp \TT^d\rp\rp$
    by linearity of the FPK equation.

    Thus the map $(u,m)\mapsto(u^M,m^M)$ 
    is continuous from
    $C^{0,1}\lp [0,T]\times\TT^d;\RR\rp
    \times C^0\lp [0,T];\cP\lp \TT^d\rp\rp$
    to itself.
    Its fixed points are exactly
    the solutions to \eqref{eq:MFGCM}.
    The image of this map
    is a subset of a convex compact set.
    Therefore, there exists a fixed point
    by Schauder theorem,
    see
    \cite{MR1814364}
    Corollary $11.2$.

    Using the same arguments as in
    the proof of Lemma \ref{lem:ubound},
    such a fixed point $u$ satisfies
    $u\in C^{1+\frac{\bb}2,2+\bb}
    \lp [0,T]\times\TT^d;\RR\rp$
    for any $\bb\in\lp0,\bb_0^2\rp$.
\end{proof}

Considering $M<\infty$ in \eqref{eq:MFGCM}
consists of enforcing the condition
$\LL_{\infty}\lp\mu(t)\rp\leq M$,
i.e. the fact that the support of $\mu(t)$ is embedded
in the compact set $\TT^d\times B_{\RR^d}\lp0,M\rp$,
for $t\in[0,T]$.
Therefore, the interactions through controls are uniformly
bounded.
Lemma \ref{lem:existM} relies on that
to state the existence of solutions to 
\eqref{eq:MFGCM}.
For $M=\infty$, we can not obtain such a uniform
estimate by combining only the results of Section
\ref{sec:apriori}.
However if such an estimate exists,
the result of Lemma \ref{lem:existM} holds
for $M=\infty$ and yields the existence
of solutions to \eqref{eq:MFGC}.
More precisely, if
a solution to \eqref{eq:MFGCM}
satisfies
$\LL_{\infty}\lp\mu(t)\rp< M$
for any $t\in[0,T]$,
then it is also a solution to
\eqref{eq:MFGC}.
This is summarized in the following Corollary.

\begin{corollary}
    \label{cor:exist}
    Under the same assumptions as in Lemma \ref{lem:existM},
    if, for any $M>0$, any solution
    $(u,m,\mu)$ to \eqref{eq:MFGCM} 
    satisfies
    $\norminf{u}\leq C$, or
    $\norminf{\nabla_xu}\leq C$,
    for some $C>0$,
    then
    there exists at least one solution to
    \eqref{eq:MFGC}.
\end{corollary}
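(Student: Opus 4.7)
The plan is to obtain a solution of \eqref{eq:MFGC} as a solution of the truncated problem \eqref{eq:MFGCM} for sufficiently large $M$, by arguing that under the uniform hypothesis the truncation map $T_M$ becomes inactive and so the two systems coincide on that triple. First I would apply Lemma \ref{lem:existM} for each finite $M$ to produce a solution $(u^M, m^M, \mu^M)$ of \eqref{eq:MFGCM}.

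The next step is to translate the hypothesis into a uniform (i.e.\ $M$-independent) bound on $\norminf{\nabla_xu^M}$. This is automatic if $\norminf{\nabla_xu^M} \leq C$; in the other case I would invoke the estimate $\norminf{\nabla_xu} \leq C(1+\norminf{u})$ from Lemma \ref{lem:MFGCapriori}, whose constant does not depend on $M$, to convert $\norminf{u^M}\leq C$ into a uniform bound $\norminf{\nabla_xu^M}\leq C'$.

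Once such a $C'$ is in hand, I would apply inequality \eqref{eq:boundL} of Lemma \ref{lem:FPV} with $\tilde q = \infty$ and $p = \nabla_xu^M(t,\cdot)$, obtaining
\[
\LL_\infty(\mu^M(t)) \leq \frac{C_0}{1-\ll_0}\lp 1 + (C')^{q-1}\rp =: C''
\]
uniformly in $M \in (0,\infty)$ and $t \in [0,T]$. Choosing any $M^\star > C''$ then forces $\LL_\infty(\mu^{M^\star}(t)) < M^\star$ for every $t$. Since $\aa^{\mu^{M^\star}(t)}(x)$ equals $T_{M^\star}\lp-H_p\lp x,\nabla_xu^{M^\star}(t,x),\mu^{M^\star}(t)\rp\rp$ and the truncated vector has norm strictly less than $M^\star$ only when $T_{M^\star}$ acts as the identity at that point, the fixed-point relation \eqref{eq:defmuM} coincides with \eqref{eq:defmu} for $m^{M^\star}(t)$-a.e.\ $x$. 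The remaining equations of \eqref{eq:MFGCM} and \eqref{eq:MFGC} are literally identical, hence $(u^{M^\star}, m^{M^\star}, \mu^{M^\star})$ solves \eqref{eq:MFGC}.

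The only delicate point in this plan is tracking the $M$-independence of the constants produced in the two preceding steps; but Lemma \ref{lem:MFGCapriori} and Lemma \ref{lem:FPV} were stated precisely with this uniformity in mind, so no genuine obstacle is expected. The corollary is essentially a clean packaging of the a priori theory developed in Sections \ref{sec:fixed_point} and \ref{sec:apriori} together with the existence result of Lemma \ref{lem:existM}.
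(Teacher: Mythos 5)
Your proof is correct and follows essentially the same route as the paper: reduce to a uniform bound on $\norminf{\nabla_xu^M}$ via Lemma \ref{lem:MFGCapriori}, deduce a uniform bound on the support of $\mu^M$, and pick $M$ large enough that $T_M$ is the identity there, so a solution of \eqref{eq:MFGCM} is actually a solution of \eqref{eq:MFGC}. The only cosmetic difference is that you bound $\LL_\infty(\mu^M)$ directly from \eqref{eq:boundL} with $\qt=\infty$ and then argue backwards that $T_{M^\star}$ is inactive $m$-a.e.\ (which suffices since the pushforward depends only on the $m$-a.e.\ class, and in fact the inactivity is everywhere because $\aa^{\mu^{M^\star}}$ is continuous and $m^{M^\star}$ is positive), whereas the paper bounds $\norminf{H_p(\cdot,\nabla_xu,\mu)}$ before truncation via \ref{hypo:Hinvert} and chooses $M$ accordingly.
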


\begin{proof}
    By Lemma \ref{lem:existM},
    we define $(u,m,\mu)$ as a solution to \eqref{eq:MFGCM}
    for $M\in(0,\infty)$ that will be defined later.
    By Lemma \ref{lem:MFGCapriori},
    assuming 
    that $\norminf{u}$
    is bounded
    is equivalent to assuming that
    $\norminf{\nabla_xu}$
    is bounded.
    Therefore, without loss of generality,
    we can assume that 
    $\norminf{\nabla_xu}\leq C$.
    %
    From \ref{hypo:Hinvert} and \eqref{eq:boundLq0},
    we obtain
    \begin{equation*}
        \norminf{H_p\lp x,\nabla_xu,\mu\rp}
        \leq
        C_0\lp 1+C^{q-1}\rp+
        \frac{\ll_0C_0}{1-\ll_0}\lp 1+C^{q-1}\rp.
    \end{equation*}
    We define $M=1+C_0\lp 1+C^{q-1}\rp+
    \frac{\ll_0C_0}{1-\ll_0}\lp 1+C^{q-1}\rp$,
    then the truncation $T_M$ leaves
    $-H_p\lp \cdot,\nabla_xu^M,\mu\rp$ unchanged.
    Hence $(u,m,\mu)$
    is a solution to \eqref{eq:MFGC}.
\end{proof}

\subsection{Existence results 
    when $q_0\leq q'$}
When $q_0\leq q'$,
we can use integral energy estimates.
More precisely,
inequalities
\eqref{eq:MPuint}
and \eqref{eq:NRJ} hold.
Therefore,
the assumptions under which
we can prove existence should be weaker
than in the case $q_0>q'$
in which we have less estimates
at our disposal.

In particular,
Corollary \ref{cor:small_param}
provides a uniform
estimate on $\norminf{u}$
under suitable assumptions.
Corollary \ref{cor:exist}
then yields the existence
of a solution to \eqref{eq:MFGC}:
hence we may state the following theorem:
\begin{proposition}[\emph{Existence of solution
    with small non-linearities}]
\label{thm:ExiLowParam}
    Under assumptions
    \ref{hypo:Hp}-\ref{hypo:mreg},
    \ref{hypo:Hh}-\ref{hypo:Hpower},
    \ref{hypo:Hinvert},
    \ref{hypo:Hcontrac},
    \ref{hypo:Hlip},
    $q_0\leq q'$,
    and $\ll_1+C_0\ll_2<\frac{(1-\ll_0)^{q'}}{C_0^{q'}}$,
    there exists at least
    one solution to \eqref{eq:MFGC}.
\end{proposition}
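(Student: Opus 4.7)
The plan is to combine the a priori estimate of Corollary \ref{cor:small_param} with the conditional existence statement of Corollary \ref{cor:exist}. The skeleton is: produce solutions of the truncated system \eqref{eq:MFGCM} for each finite $M$ via Lemma \ref{lem:existM}, establish a bound on $\norminf{u^M}$ that does not depend on $M$, and invoke Corollary \ref{cor:exist} to pass from the truncated problem to \eqref{eq:MFGC}.

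First, I would fix an arbitrary $M\in(0,\infty)$. All of \ref{hypo:Hp}--\ref{hypo:mreg}, \ref{hypo:Hx}, \ref{hypo:Hpower}, \ref{hypo:Hinvert}, \ref{hypo:Hcontrac} and \ref{hypo:Hlip} are in force, so Lemma \ref{lem:existM} applies and furnishes a solution $(u^M,m^M,\mu^M)$ to \eqref{eq:MFGCM}. The key point is that the assumptions of Corollary \ref{cor:small_param} are exactly the additional hypotheses imposed here, namely \ref{hypo:Hh}, $q_0\leq q'$, and the strict smallness condition $\ll_1+C_0\ll_2<\frac{(1-\ll_0)^{q'}}{C_0^{q'}}$. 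Consequently Corollary \ref{cor:small_param} delivers a bound
\begin{equation*}
    \norminf{u^M}\leq C,
\end{equation*}
where $C$ depends only on the structural constants of the assumptions and in particular is independent of $M$. This is the uniform ingredient that the truncated existence result alone could not provide.

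Next, I would feed this $M$-uniform bound into Corollary \ref{cor:exist}: the hypothesis ``for any $M>0$, any solution $(u,m,\mu)$ to \eqref{eq:MFGCM} satisfies $\norminf{u}\leq C$'' is verified, so Corollary \ref{cor:exist} concludes the existence of a solution to \eqref{eq:MFGC}. Concretely, the argument in that corollary upgrades the $L^\infty$ control on $u^M$ to an $L^\infty$ control on $\nabla_x u^M$ via Lemma \ref{lem:MFGCapriori} (which again holds uniformly in $M$ under the present assumptions), then applies \ref{hypo:Hinvert} together with \eqref{eq:boundLq0} to bound $\norminf{H_p(\cdot,\nabla_xu^M,\mu^M)}$ uniformly in $M$. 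Choosing $M$ strictly larger than this bound makes the truncation $T_M$ act as the identity on $-H_p(\cdot,\nabla_x u^M,\mu^M)$, so that $(u^M,m^M,\mu^M)$ automatically solves the un-truncated system \eqref{eq:MFGC}.

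No real obstacle remains; the main content is contained in the a priori machinery of Section \ref{sec:apriori} and the fixed-point construction of Lemma \ref{lem:existM}. The delicate step that is being exploited is the use of the smallness condition on $\ll_1+C_0\ll_2$ to close the loop between the maximum-principle bound \eqref{eq:MPuint} and the duality energy bound \eqref{eq:NRJ}; once that loop is closed through Corollary \ref{cor:small_param}, the remainder is a direct application of the general compactness scheme encoded in Corollary \ref{cor:exist}.
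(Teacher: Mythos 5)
Your proof is correct and follows exactly the route the paper uses: it invokes Lemma \ref{lem:existM} for the truncated system, Corollary \ref{cor:small_param} for the $M$-independent bound on $\norminf{u^M}$ under the smallness condition on $\ll_1+C_0\ll_2$, and Corollary \ref{cor:exist} to remove the truncation. The paper states this proposition with precisely this two-corollary justification.
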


Instead of assuming that the multiplicative parameters
are small like in 
Proposition \ref{thm:ExiLowParam};
we suppose in Propositions
\ref{thm:ExiBound}
below the exponent
for the interactions
through controls 
is in fact smaller than the one appearing
in \ref{hypo:Hh}.
\begin{proposition}
    \label{thm:ExiExpo}
    Assume
    \ref{hypo:Hp}-\ref{hypo:mreg},
    \ref{hypo:Hx},
    \ref{hypo:Hpower},
    \ref{hypo:Hinvert},
    \ref{hypo:Hcontrac},
    \ref{hypo:Hlip},
    $q_0\leq q'$,
    and that $H$ satisfies
    \begin{equation*}
        \labs H(x,0,\mu)\rabs
        \leq
        C_0\lp 1 +  \LL_{q_0}\lp\mu\rp^{\qt}\rp,
    \end{equation*}
    for $(x,\mu)\in\TT^d\times
    \cP\lp\TT^d\times\RR^d\rp$,
    where $\qt\in[0,q')$ is a constant.
    There exists a solution
    to \eqref{eq:MFGC}.
\end{proposition}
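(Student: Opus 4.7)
The plan is to combine Lemma \ref{lem:existM} with a uniform (in $M$) a priori bound on $\norminf{u^M}$ so that Corollary \ref{cor:exist} delivers a solution of \eqref{eq:MFGC}. For each $M\in(0,\infty)$, Lemma \ref{lem:existM} provides a solution $(u^M,m^M,\mu^M)$ of \eqref{eq:MFGCM}, and by Lemma \ref{lem:MFGCapriori} it suffices to bound $\norminf{u^M}$ independently of $M$, since then $\norminf{\nabla_xu^M}$ will be uniformly controlled as well.

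The central step is to revisit the maximum-principle argument of Lemma \ref{lem:MPu} under the sharper subcritical hypothesis $|H(x,0,\mu)|\leq C_0(1+\LL_{q_0}(\mu)^{\qt})$ with $\qt<q'$. Repeating that proof verbatim yields
\begin{equation*}
    \norminf{u^M}
    \leq C_0(1+T)
    + C_0\int_0^T\LL_{q_0}\lp\mu^M(t)\rp^{\qt}\,dt.
\end{equation*}
Since $q_0\leq q'$, Jensen's inequality combined with \eqref{eq:boundL} applied with $\qt=q'$ gives $\LL_{q_0}(\mu^M(t))\leq C\lp 1+\norm{\nabla_xu^M(t,\cdot)}{L^q(m^M(t))}^{q-1}\rp$, and the identity $(q-1)\qt=q\qt/q'$ together with Hölder's inequality in the time variable (with exponent $q'/\qt>1$) produces
\begin{equation*}
    \int_0^T\LL_{q_0}\lp\mu^M(t)\rp^{\qt}\,dt
    \leq
    C\lp 1+\lp\int_0^T\!\int_{\TT^d}|\nabla_xu^M|^q\,dm^M(t,x)\,dt\rp^{\qt/q'}\rp,
\end{equation*}
with $C$ depending only on $T$ and the constants in the assumptions.

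Next, Lemma \ref{lem:NRJ} controls the $L^q(dm\,dt)$-energy by $C\lp1+\norminf{u^M}\rp$. Chaining these inequalities produces an estimate of the form $\norminf{u^M}\leq C\lp 1+\norminf{u^M}^{\qt/q'}\rp$ with a constant $C$ independent of $M$. Since $\qt/q'<1$, this superlinear comparison forces a uniform bound on $\norminf{u^M}$. With this bound at hand, Corollary \ref{cor:exist} immediately yields a solution of \eqref{eq:MFGC}. The only delicate point in the argument is the careful tracking of exponents through the Jensen/Hölder chain: everything hinges on the strict subcriticality $\qt<q'$, without which the closing inequality would be linear in $\norminf{u^M}$ and one would need an additional smallness condition on the coefficients, as in Proposition \ref{thm:ExiLowParam}, to close.
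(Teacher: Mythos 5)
Your proposal is correct and follows essentially the same route as the paper: Lemma \ref{lem:existM} for existence at finite $M$, the maximum-principle estimate of Lemma \ref{lem:MPu} with the subcritical exponent $\qt$, a time Hölder inequality with exponent $q'/\qt$ to convert the $\LL_{q_0}^{\qt}$ integral into a power $\qt/q'$ of the energy, Lemma \ref{lem:NRJ} to bound that energy by $\norminf{u^M}$, and the resulting sublinear self-improving inequality $\norminf{u^M}\leq C(1+\norminf{u^M}^{\qt/q'})$ closed via Corollary \ref{cor:exist}. The only cosmetic difference is the order of operations in the Jensen/Hölder chain (you apply \eqref{eq:boundL} pointwise in $t$ before Hölder in time, whereas the paper applies Hölder in time first and then \eqref{eq:boundLq0q'int}); both lead to the same bound.
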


\begin{proof}
    Let $(u,m,\mu)$ be a solution to \eqref{eq:MFGCM}
    for $M\in(0,\infty)$.
    From \ref{hypo:gregx}, \eqref{eq:MPu} and the new assumption,
    we obtain that,
    \begin{align*}
        \norminf{u}
        &\leq
        C_0\lp 1+T\rp
        +C_0\int_0^T\LL_{q_0}\lp\mu(t)\rp^{\qt} dt
        \\
        &\leq
        C_0\lp 1+T\rp
        +C_0T^{\frac{q'-\qt}{q'}}
        \lp\int_0^T\LL_{q_0}\lp\mu(t)\rp^{q'} dt\rp^{\frac{\qt}{q'}},
    \end{align*}
    where the second line is obtained by a Hölder inequality,
    since $\qt<q'$.
    Let us recall that the inequality
    $\lp a+b\rp^{\frac{\qt}{q'}}
    \leq
    a^{\frac{\qt}{q'}}
    +b^{\frac{\qt}{q'}}$
    holds for any $a,b>0$.
    The latter two inequalities
    and \eqref{eq:boundLq0q'int} with $\th=\frac12$ imply,
    \begin{equation*}
        \norminf{u}
        \leq
        C+
        C\lp\int_0^T\int_{\TT^d}
        \labs\nabla_xu(t,x)\rabs^{q}dm(t,x)dt\rp^{\frac{\qt}{q'}},
    \end{equation*}
    where $C>0$ is a constant which depends on
    the constants in the assumptions.
    This and \eqref{eq:NRJ} yield that,
    \begin{equation*}
        \norminf{u}
        \leq
        C+
        C\norminf[\frac{\qt}{q'}]{u},
    \end{equation*}
    up changing the value of $C$.
    Let us make out two cases:
    the first case is when
    $\norminf{u}\leq \lp2C\rp^{\frac{q'}{q'-\qt}}$.
    The second case is when
    $\norminf{u}> \lp2C\rp^{\frac{q'}{q'-\qt}}$,
    which implies $\norminf{u}\leq C+\frac12\norminf{u}$.
    In any of the two cases,
    $u$ is uniformly bounded with respect to $M$.
    The desired result then stems from
    Corollary \ref{cor:exist}.
\end{proof}

\subsection{Existence results
    which do not need the assumption $q_0<q'$}
Here, we do not make
the assumption $q_0\leq q'$.
We can still obtain an existence result
in the same spirit as the one
provided in Proposition
\ref{thm:ExiExpo}.
In the following proposition,
the exponent for the interactions
through controls is assumed to be smaller than 
the one appearing
in \ref{hypo:Hh}
or in Proposition
\ref{thm:ExiExpo}.
\begin{proposition}
    \label{thm:ExiBound}
    Assume
    \ref{hypo:Hp}-\ref{hypo:mreg},
    \ref{hypo:Hx},
    \ref{hypo:Hpower},
    \ref{hypo:Hinvert},
    \ref{hypo:Hcontrac},
    \ref{hypo:Hlip},
    and that $H$ satisfies
    \begin{equation}
        \label{eq:Exibound}
        \labs H(x,0,\mu)\rabs
        \leq
        C_0\lp 1+\LL_{q_0}\lp\mu\rp^{q'-1}\rp
    \end{equation}
    for any $(x,\mu)\in\TT^d\times
    \cP\lp\TT^d\times\RR^d\rp$.
    There exists a solution
    to \eqref{eq:MFGC}.
\end{proposition}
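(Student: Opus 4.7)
The strategy is to produce a uniform a priori bound on $\norminf{u}$ for any solution $(u,m,\mu)$ of the truncated system \eqref{eq:MFGCM} with $M\in(0,\infty)$, and then invoke Corollary \ref{cor:exist}. Since we no longer have $q_0\le q'$ at our disposal, the integral energy identity of Lemma \ref{lem:NRJ} is unavailable and we must work entirely with pointwise/sup-norm estimates. The reinforced assumption \eqref{eq:Exibound}, which lowers the exponent in $\LL_{q_0}(\mu)$ from $q'$ to $q'-1$, is precisely tailored to make this loop close.

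Given a solution $(u,m,\mu)$ of \eqref{eq:MFGCM} produced by Lemma \ref{lem:existM}, I would first apply the parabolic maximum principle to \eqref{eq:HJB2} between times $t$ and $T$, which yields
\begin{equation*}
    \norminf{u(t,\cdot)}
    \leq
    \norminf{u(T,\cdot)}
    +\int_t^T \norminf{H(\cdot,0,\mu(s))}\,ds
    \leq
    C_0(1+T)
    +C_0\int_t^T \LL_{q_0}(\mu(s))^{q'-1}\,ds,
\end{equation*}
where I have used \ref{hypo:gregx} for the terminal value and the new hypothesis \eqref{eq:Exibound} for the source term. Next, I would combine the Bernstein-type bound of Lemma \ref{lem:Bernstein} with the estimate \eqref{eq:boundL} of Lemma \ref{lem:FPV} (taking $\qt=\infty$) to deduce
\begin{equation*}
    \LL_{q_0}(\mu(s))
    \leq
    \frac{C_0}{1-\ll_0}\Bigl(1+\norminf[q-1]{\nabla_xu(s,\cdot)}\Bigr)
    \leq
    C\Bigl(1+\max_{s\le\sigma\le T}\norminf{u(\sigma,\cdot)}\Bigr)^{q-1}.
\end{equation*}

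The crucial algebraic observation is that the conjugate exponents satisfy $(q-1)(q'-1)=1$, so raising to the power $q'-1$ makes the exponent linear: $\LL_{q_0}(\mu(s))^{q'-1}\le C(1+N(s))$, where $N(s):=\max_{s\le\sigma\le T}\norminf{u(\sigma,\cdot)}$. Substituting back into the maximum principle estimate and taking the sup over $t'\in[t,T]$ on the left, the map $t\mapsto N(t)$ satisfies a linear integral inequality
\begin{equation*}
    N(t)\leq C_1+C_2\int_t^T N(s)\,ds,
\end{equation*}
with constants independent of $M$. Gronwall's lemma (in its backward form) then gives $N(0)\le C_1 e^{C_2 T}$, which is a uniform $L^\infty$ bound on $u$. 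Finally, Corollary \ref{cor:exist} converts this $M$-independent bound into existence of a solution of the untruncated system \eqref{eq:MFGC}.

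The main obstacle, and the reason the statement requires exactly the exponent $q'-1$ in \eqref{eq:Exibound}, lies in step two: absent the identity $(q-1)(q'-1)=1$, the inequality satisfied by $N$ would be superlinear (e.g.\ $N(t)\le C_1+C_2\int_t^T N(s)^{\sigma}ds$ with $\sigma>1$), which could blow up in finite time and would not produce a uniform bound. Everything else is a straightforward packaging of the a priori estimates already proved in Section \ref{sec:apriori} together with the compactness machinery of Section \ref{sec:ExiUni}.
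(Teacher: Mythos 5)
Your proposal is correct and follows essentially the same path as the paper: combine the maximum-principle estimate for $u$ (coming from \eqref{eq:HJB2} and \eqref{eq:Exibound}) with the Bernstein bound of Lemma \ref{lem:Bernstein} and the $\LL_{q_0}$-estimate \eqref{eq:boundL}, observe that the exponent $q'-1$ is exactly the reciprocal of $q-1$ so the substitution produces a linear inequality, and then conclude uniform boundedness of $u$ and invoke Corollary \ref{cor:exist}. The only cosmetic difference is the closing step: you apply backward Gronwall to the integral inequality for $N(t)=\max_{t\le s\le T}\norminf{u(s)}$, whereas the paper writes the resulting bound as a parabolic differential inequality and compares $u$ with super/sub-solutions of the linear ODE $y'=C(1\pm y)$; these are equivalent ways of exploiting the same linear-in-$N$ structure. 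Your explicit highlighting of the identity $(q-1)(q'-1)=1$ is in fact clearer than the paper's terse invocation of $(a+b)^{1/q}\le a^{1/q}+b^{1/q}$ (which encodes the same fact, since $q'-1=q'/q$).
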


\begin{proof}
    Take $(u,m,\mu)$
    a solution to \eqref{eq:MFGCM}
    for $M\in(0,\infty)$.    
    Let us combine \eqref{eq:HJB2},
    \eqref{eq:boundLq0q'inf} for $\th=\frac12$,
    \eqref{eq:Bernstein},
    \eqref{eq:Exibound},
    and the inequality
    $\lp a+b\rp^{\frac1q}
    \leq
    a^{\frac1q}+b^{\frac1q}$
    which holds for $a,b>0$;
    this yields
    \begin{equation}
        \label{eq:ExiBound_PDI}        
        -\ptt u
        -\nu\Delta u
        +\nabla_xu\cdot
        \int_0^1
        H_p\lp x,s\nabla_xu(t,x),\mu(t)\rp ds
        \leq
        C\lp1+\max_{t\leq s\leq T}\norminf{u(s)}\rp,
    \end{equation}
    for a constant $C>0$ which depends only
    on the constants in the assumptions.
    We recall that $\norminf{u(T)}\leq C_0$
    by \ref{hypo:gregx}.
    We consider $y_{+},y_-\in C^1\lp [0,T];\RR\rp$
    defined as $y_+(t)=Ct+C_0e^{Ct}$
    and $y_-(t)=-Ct-C_0e^{Ct}$
    such that they are solution
    to the following differential equations
    \begin{align*}
        &\lc
        \begin{aligned}
            y_+'(t)
            &=C(1+y_+(t)) \\
            y_+(0)
            &=C_0,
        \end{aligned}
        \right.\\
        &\lc
        \begin{aligned}
            y_-'(t)
            &=C(-1+y_-(t)) \\
            y_-(0)
            &=-C_0.
        \end{aligned}
        \right.
    \end{align*}
    By a comparison argument
    for second-order parabolic equation
    we obtain,
    \begin{equation*}
        -CT-C_0e^{CT}
        \leq
        y_-(T-t)
        \leq
        u(t,x)
        \leq
        y_+(t)
        \leq
        CT+C_0e^{CT},
    \end{equation*}
    for $(t,x)\in[0,T]\times\TT^d$.
    Therefore $u$ is uniformly bounded with respect
    to $M$.
    The desired result then stems from
    Corollary \ref{cor:exist}.
\end{proof}
In Propotitions
\ref{thm:ExiExpo} and
\ref{thm:ExiBound},
we changed the exponent
appearing in \ref{hypo:Hh}.
In the following proposition,
we assume a smaller exponent
than the one 
appearing in \ref{hypo:Hx} instead.
\begin{proposition}[\emph{Existence
    with more restrictive assumptions
    on $H_x$}]
    \label{thm:ExiLowSpaDep}
    Assume
    \ref{hypo:Hp}-\ref{hypo:mreg},
    \ref{hypo:Hh},
    \ref{hypo:Hpower},
    \ref{hypo:Hinvert},
    \ref{hypo:Hcontrac},
    \ref{hypo:Hlip},
    and the following inequality,
    \begin{equation}
        \label{eq:ExiLowSpaDep}
        \labs H_x (x,p,\mu)\rabs
        \leq
        C_0\lp 1+|p|
        +\LL_{q_0}\lp\mu\rp^{q'-1}\rp,
    \end{equation}
    for any $(x,p,\mu)\in
    \TT^d\times\RR^d\times\cP\lp\TT^d\times\RR^d\rp$.
    There exists at least
    one solution to \eqref{eq:MFGC}.
\end{proposition}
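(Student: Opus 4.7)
The plan is to follow the strategy of Propositions \ref{thm:ExiLowParam}--\ref{thm:ExiBound}: for $M\in(0,\infty)$, Lemma \ref{lem:existM} supplies a solution $(u^M,m^M,\mu^M)$ of the truncated system \eqref{eq:MFGCM} (its assumptions are met since \eqref{eq:ExiLowSpaDep} dominates \ref{hypo:Hx} up to a multiplicative constant), and it remains to bound $\norminf{\nabla_xu^M}$ uniformly in $M$ and then to conclude via Corollary \ref{cor:exist}. Unlike in the previous existence results, where a uniform bound on $\norminf{u^M}$ is the first target, here the improved control on $H_x$ provided by \eqref{eq:ExiLowSpaDep} lets us argue directly on the gradient.

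The central step is to differentiate \eqref{eq:HJBM} with respect to $x_i$: the function $v_i := u^M_{x_i}$ formally solves the linear backward parabolic equation
\begin{equation*}
    -\ptt v_i - \nu\Delta v_i + H_p(x,\nabla_xu^M,\mu^M)\cdot\nabla_xv_i = -H_{x_i}(x,\nabla_xu^M,\mu^M),
\end{equation*}
with terminal condition $v_i(T,\cdot) = g_{x_i}(\cdot,m^M(T))$. A direct comparison with the spatially constant function $t \mapsto \norminf{v_i(T)}+\int_t^T\norminf{H_{x_i}(\cdot,\nabla_xu^M(s),\mu^M(s))}\,ds$, which is a supersolution of the equation above and of its sign-reversed version, yields
\begin{equation*}
    \norminf{v_i(t)} \leq C_0 + \int_t^T \norminf{H_{x_i}(\cdot,\nabla_xu^M(s),\mu^M(s))}\,ds.
\end{equation*}

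The new assumption \eqref{eq:ExiLowSpaDep}, combined with the bound $\LL_{q_0}(\mu^M(s))^{q'-1} \leq C(1+\norminf{\nabla_xu^M(s)})$ (which follows from \eqref{eq:boundL} and the identity $(q-1)(q'-1)=1$), gives $\norminf{H_{x_i}(\cdot,\nabla_xu^M(s),\mu^M(s))} \leq C(1+\norminf{\nabla_xu^M(s)})$. Summing over $i$ and absorbing constants leads to
\begin{equation*}
    \norminf{\nabla_xu^M(t)} \leq C_1 + C_2\int_t^T \norminf{\nabla_xu^M(s)}\,ds,
\end{equation*}
with $C_1,C_2>0$ depending only on $T$ and the constants in the assumptions. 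Gronwall's lemma then delivers $\norminf{\nabla_xu^M(t)} \leq C_1 e^{C_2 T}$ uniformly in $M$, and Corollary \ref{cor:exist} concludes the proof.

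The main technical obstacle is the pointwise differentiation of \eqref{eq:HJBM}: Lemma \ref{lem:MFGCapriori} only provides $u^M \in C^{1+\bb/2,2+\bb}$, so $v_i$ is a priori only $(1+\bb)$-Hölder in $x$ and the displayed equation is not classically valid. I would resolve this in the same way as in the proof of Lemma \ref{lem:Bernstein}: work with the mollified function $u^{M,\dd}=\rho^\dd\star u^M$, which satisfies a smoothed HJB equation with a commutator remainder that tends uniformly to zero as $\dd\to 0$, perform the differentiation and the maximum-principle estimate at the level $\dd>0$, and pass to the limit $\dd\to 0$.
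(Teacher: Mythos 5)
Your proof is correct, and it is genuinely different from — and simpler than — the paper's own argument. The paper proves the proposition in two steps: first, it reruns the weighted Bernstein estimate of Lemma \ref{lem:Bernstein} (with the weight $\vp(u)$ and the cancellation coming from \ref{hypo:Hpower}), using \eqref{eq:ExiLowSpaDep} to lower the exponent of $\norminf{w^0}$ in the final differential inequality from $\frac{1+q}{2}$ to $1$, obtaining $\norminf[q]{\nabla_xu(t)}\leq C(1+\sup_{t\leq s\leq T}\norminf{u(s)})$; second, it combines this with \ref{hypo:Hh} and \eqref{eq:boundLq0q'inf} to get a linear bound on $|H(\cdot,0,\mu(t))|$ in $\sup_s\norminf{u(s)}$, and closes via an ODE super/subsolution comparison exactly as in Proposition \ref{thm:ExiBound}, before invoking Corollary \ref{cor:exist}. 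You instead exploit the fact that \eqref{eq:ExiLowSpaDep} together with $\LL_{q_0}(\mu^M(s))^{q'-1}\leq C(1+\norminf{\nabla_xu^M(s)})$ (valid since $(q-1)(q'-1)=1$) gives an affine-in-gradient bound on $H_x$, so the maximum principle applied componentwise to $v_i=\px_iu^M$ yields a closed affine integral inequality for $\norminf{\nabla_xu^M(t)}$ which Gronwall resolves directly — bypassing the weight $\vp$, the cancellation structure of \ref{hypo:Hpower}, and the intermediate control of $\norminf{u}$ altogether. Your identification of the only technical gap — that $v_i$ is a priori only in $C^{(1+\bb)/2,1+\bb}$ so the differentiated equation is not classical — is accurate, and your proposed fix (mollify in space as in Lemma \ref{lem:Bernstein}, carry the uniformly vanishing commutator remainder through the maximum-principle estimate, then let $\dd\to 0$) is exactly the device the paper itself uses in that lemma and works without modification here. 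In short, your route is a valid and cleaner alternative, trading the Bernstein machinery for the observation that linear growth of $H_x$ in $p$ makes the gradient equation benign; the paper's route is heavier but is designed to parallel the other existence results in Section \ref{sec:ExiUni} and to reuse the Bernstein apparatus already in place.
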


\begin{proof}
    Take $(u,m,\mu)$
    a solution to \eqref{eq:MFGCM},
    for $M\in(0,\infty)$.

    \emph{First step: we prove the following inequality,}
    \begin{equation}
        \label{eq:ExiLowSpaDep_aux1}
        \norminf[q]{\nabla_xu(t)}
        \leq 
        C\lp1+\sup_{t\leq s \leq T}\norminf{u(s)}\rp,
    \end{equation}
    for any $t\in[0,T]$,
    where $C>0$ is a constant depending only on
    the constants in the assumptions.
    We will only prove this inequality for $t=0$,
    however the proof does not use
    the additional information available at $t=0$
    (the initial condition on $m$ for example),
    so it can be repeated for any $t\in[0,T]$
    and the constant $C$ in \eqref{eq:ExiLowSpaDep_aux1}
    does not depend on $t$.

    We introduce $\vp, w^{\dd}, a,b,\dd$ and $\ee$
    as in the proof of Lemma \ref{lem:Bernstein}.
    Using \eqref{eq:ExiLowSpaDep}
    instead of \ref{hypo:Hx}, we obtain
    \begin{equation*}
        -2\vp\nabla_xu^{\dd}\cdot H_x^{\dd}\lp x,\nabla_xu,\mu\rp
        \leq
        2C_0\vp\norminf{\nabla_xu^{\dd}}
        \lp 1+
        \norminf{\nabla_xu^{\dd}}
        +\LL_{q_0}\lp\mu(t)\rp^{q'-1}\rp.
    \end{equation*}
    From this and \eqref{eq:boundL},
    one may notice
    that the right-hand side of the latter inequality
    only involves terms with exponents in
    $\norminf{w^{\dd}}$ or $\norminf{w^0}$
    nor larger than $\frac12\lp1+(q-1)(q'-1)\rp=1$.
    This and the same arguments as in the proof
    of Lemma \ref{lem:Bernstein} between \eqref{eq:boundHx}
    and \eqref{eq:ubound_PDIw},
    lead to the following inequality,
    \begin{multline*}
        \ptt w^{\dd} -\nu\Delta w^{\dd}
        + \nabla_xw^{\dd}\cdot  H_p\lp x,\nabla_x u^{\dd}, \mu\rp 
        -2\nu\frac{\vp'}{\vp} \nabla_xw^{\dd}\cdot \nabla_xu^{\dd} 
        \\
        \leq
        -C_0^{-1}
        b\norminf[-1]{u}e^{-a-b}e^{-\frac{q}2e^{-a+b}}
        \lp w^{\dd}\rp^{1+\frac{q}2}
        +b\norminf[-1]{u}e^{-a+b}
        \frac{\ll_1C_0^{q'}}{(1-\th)^{1-q'}(1-\ll_0)^{q'}}
        \norminf[1+\frac{q}2]{w^0}
        \\
        +\ee
        +C_{a,b,\th}\lp1+\norminf[-1]{u}\rp
        \lp 1+
        \norminf{w^0}\rp,
    \end{multline*}
    instead of \eqref{eq:ubound_PDIw},
    where the novelty is the exponent on $\norminf{w^0}$
    at the last line which changed from $\frac{1+q}2$
    to $1$.
    Then following the same steps as
    in the proof of Lemma \ref{lem:Bernstein}
    until the end, we obtain that,
    \begin{equation*}
        \norminf[q]{\nabla_xu}
        \leq
        C_{a,b,\th}\lp 1+\norminf{u}\rp.
    \end{equation*}
    This concludes the first step of the proof.

    \emph{Second step: obtaining a uniform estimate on $u$.}

    Using \ref{hypo:Hh},
    \eqref{eq:boundLq0q'inf} with $\th=\frac12$
    and \eqref{eq:ExiLowSpaDep_aux1},
    we obtain that,
    \begin{align*}
        \labs H\lp x,0,\mu(t)\rp\rabs
        &\leq
        C_0\lp 1
        +\LL_{q_0}\lp\mu(t)\rp^{q'}\rp
        \\
        &\leq
        2C_0
        +\frac{C_0^{q'+1}2^{q'-1}}{(1-\ll)^{q'}}
        \lp 1+\norminf[q]{\nabla_xu(t)} \rp \\
        &\leq
        C\lp1
        +\max_{t\leq s\leq T}\norminf{u(s)}\rp,
    \end{align*}
    where the constant $C$ from the previous step may have
    been increased.
    This implies that $u$ satisfies the same partial
    differential inequality as in the proof
    of Proposition \ref{thm:ExiBound},
    namely \eqref{eq:ExiBound_PDI}.
    Therefore the same arguments as in 
    Proposition \ref{thm:ExiBound} apply
    and we conclude that there exists a solution
    to \eqref{eq:MFGC}.
\end{proof}
    \begin{remark}
        Note that the exponent $q' -1$
        actually appears in several applications:
for instance,
the price impact model described
in paragraph \ref{subsec:price_impact}
in the quadratic case (i.e. $q=2$)
with $\ee=0$ (i.e. when the bidding 
and asking prices are equal),
satisfies the assumptions
in both Propositions 
\ref{thm:ExiBound} and 
\ref{thm:ExiLowSpaDep}
with an exponent exactly equal
to $q'-1$.
\end{remark}

\subsection{Existence and uniqueness results with
    a short-time horizon assumption}
\label{subsec:shorttime}
Under a short-time horizon assumption,
existence and even uniqueness of solutions
are well-known in the MFG literature.
Indeed,
when the time horizon is small,
one may obtain strong a priori estimates
under non-restrictive assumptions.
These estimates combined with Corollary \ref{cor:exist}
yield existence of solution to
\eqref{eq:MFGC} as stated in the following proposition.

\begin{proposition}[\emph{Existence with short time horizon}]
\label{thm:ExiLowT}
    Assume \ref{hypo:Hp}, 
    \ref{hypo:gregx}-\ref{hypo:Hh},
    \ref{hypo:Hx}-\ref{hypo:Hinvert},
    \ref{hypo:Hcontrac},
    and \ref{hypo:Hlip}.
    There exists $T_0>0$ such that,
    if $T\leq T_0$ then there exists
    a solution to \eqref{eq:MFGC}.
\end{proposition}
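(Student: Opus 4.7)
The plan is to combine the a priori estimates of Section~\ref{sec:apriori} with Corollary~\ref{cor:exist}. By Lemma~\ref{lem:existM}, for every finite $M>0$ there exists a solution $(u^M,m^M,\mu^M)$ of the truncated system \eqref{eq:MFGCM}, and thanks to Corollary~\ref{cor:exist} it suffices to bound $\norminf{u^M}$ by a constant independent of $M$, provided $T$ is small enough.

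I would first apply the maximum principle to the rewritten HJB equation \eqref{eq:HJB2} (as in Lemma~\ref{lem:MPu}), together with \ref{hypo:gregx} and \ref{hypo:Hh}, to obtain for every $t\in[0,T]$
\begin{equation*}
    \psi(t):=\max_{s\in[t,T]}\norminf{u^M(s)}\leq C_0+C_0(T-t)+\ll_2\int_t^T\LL_{q_0}(\mu^M(s))^{q'}\,ds.
\end{equation*}
Inserting \eqref{eq:boundLq0q'inf} with $\th=\frac12$ to bound $\LL_{q_0}(\mu^M(s))^{q'}$ by a constant multiple of $1+\norminf[q]{\nabla_xu^M(s)}$, and then using the Bernstein estimate of Lemma~\ref{lem:Bernstein}, namely $\norminf{\nabla_xu^M(s)}\leq C_1(1+\psi(s))\leq C_1(1+\psi(t))$ for $s\geq t$, leads to the implicit inequality
\begin{equation*}
    \psi(t)\leq A+B(T-t)\bigl(1+\psi(t)\bigr)^{q},
\end{equation*}
with constants $A\geq C_0$ and $B>0$ depending only on the data of the problem.

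The main obstacle is extracting from this inequality a uniform bound on $\psi(0)=\norminf{u^M}$ for small $T$, since the right-hand side grows superlinearly in $\psi(t)$. I would proceed by a continuation argument in the time variable. Since $u^M\in C^{1,2}$, the map $t\mapsto\psi(t)$ is continuous and non-increasing on $[0,T]$, and $\psi(T)=\norminf{g(\cdot,m^M(T))}\leq C_0\leq A$ by \ref{hypo:gregx}. Setting $\tau:=\inf\{t\in[0,T]:\psi(t)\leq 2A\}$, monotonicity gives $\{t:\psi(t)\leq 2A\}=[\tau,T]$, and continuity forces $\psi(\tau)=2A$ whenever $\tau>0$; plugging $t=\tau$ into the implicit inequality then yields
\begin{equation*}
    2A\leq A+BT(1+2A)^{q},
\end{equation*}
which is a contradiction as soon as $T\leq T_0$ with $BT_0(1+2A)^{q}<A$. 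Hence $\tau=0$, so $\norminf{u^M}\leq 2A$ uniformly in $M$, and Corollary~\ref{cor:exist} delivers a solution to the original system \eqref{eq:MFGC}.
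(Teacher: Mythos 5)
Your proof is correct and takes essentially the same route as the paper: you solve the truncated system via Lemma~\ref{lem:existM}, reduce to a uniform bound on $\norminf{u^M}$ via Corollary~\ref{cor:exist}, and then combine the maximum principle, \ref{hypo:Hh}, \eqref{eq:boundLq0q'inf}, and the Bernstein estimate of Lemma~\ref{lem:Bernstein} to obtain a nonlinear Gronwall-type inequality with superlinear growth $(1+\psi)^q$, which closes only for short time.

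The one place where you genuinely diverge from the paper is the final closure step. The paper keeps the bound at the PDE level: it writes the parabolic differential inequality \eqref{eq:ExiLowT_PDI}, declares $(t,x)\mapsto y(T-t)$ (with $y'=C(1+y^q)$, $y(0)=C_0$) to be a super-solution, and invokes a comparison principle. Because the right-hand side of that inequality involves $\max_{t\leq s\leq T}\norminf[q]{u(s)}$, which depends on the unknown itself, the super-solution property is not immediate; the argument implicitly relies on a bootstrap in time. Your reduction to the scalar quantity $\psi(t)=\max_{s\in[t,T]}\norminf{u^M(s)}$ and the explicit continuation argument with $\tau=\inf\{t:\psi(t)\leq 2A\}$ make that bootstrap explicit and rigorous, at the cost of being slightly more hands-on than the ODE comparison. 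Both constructions produce the same type of threshold time $T_0$, and the rest of the argument (passing to \eqref{eq:MFGC} via Corollary~\ref{cor:exist}) is identical. One small bookkeeping point: your constant $A$ involves $C_0(1+T)$ and hence formally depends on $T$; this circularity is harmless provided you first fix an a priori cap (e.g.\ $T\leq 1$) before optimizing $T_0$, which is what you seem to intend but should state.
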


\begin{proof}
    Take $(u,m,\mu)$
    a solution to \eqref{eq:MFGCM}
    for $M\in(0,\infty)$.    
    We combine \eqref{eq:HJB2},
    \ref{hypo:Hinvert},
    \eqref{eq:boundLq0q'inf},
    \eqref{eq:Bernstein},
    and the convex inequality
    $\lp a+b\rp^{q}\leq 2^{q-1}\lp a^q+b^q\rp$,
    and we obtain
    \begin{equation}
        \label{eq:ExiLowT_PDI}        
        -\ptt u
        -\nu\Delta u
        +\nabla_xu\cdot
        \int_0^1
        H_p\lp x,s\nabla_xu(t,x),\mu(t)\rp ds
        \leq
        C\lp1+\max_{t\leq s\leq T}\norminf[q]{u(s)}\rp,
    \end{equation}
    where $C$ is a positive constant which
    depends only on the constants in the assumptions.
    We recall that $\norminf{u(T)}\leq C_0$
    by \ref{hypo:gregx}.
    Let us  consider the following
    differential equation,
    \begin{equation*}
        \lc
        \begin{aligned}
            y'(t)
            &=
            C\lp1+y^q\rp
            \\
            y(0)
            &=
            C_0.
        \end{aligned}
        \right.
    \end{equation*}
    There exists $T_0>0$ such that the latter
    differential equation admits
    a bounded solution on $[0,T_0]$.
    We suppose that $T\leq T_0$,
    then $(t,x)\mapsto y(T-t)$ is
    a super-solution to \eqref{eq:ExiLowT_PDI}.
    Hence by a comparison principle,
    we get that
    $u\leq y$.
    The same argument applies in order to 
    prove that $u\geq -y$.
    Therefore $u$ is uniformly bounded
    with respect to $M$,
    and there exists a solution to
    \eqref{eq:MFGC}
    by Corollary \ref{cor:exist}.
\end{proof}

We will now prove Theorem \ref{thm:UniLowT}
which states that uniqueness is achieved under
a short-time horizon assumption.
We believe that this uniqueness result
can be easily extended to more general Hamiltonians,
but that the short-time assumption is essential.
Indeed, numerical simulations
in \cite{achdou2020mean} show
that uniqueness does not hold
for the discrete MFGC system
obtained by approximating \eqref{eq:MFGC}
with finite differences;
we believe that uniqueness does not
hold for \eqref{eq:MFGC} either.
Theorem \ref{thm:UniLowT}
should be interpreted only as a simple example of
uniqueness result with a short-time horizon assumption.
\begin{proof}[Proof of Theorem \ref{thm:UniLowT}]
    We suppose that $T_1\leq T_0$,
    where $T_0$ was defined in Proposition \ref{thm:ExiLowT},
    so that a solution to \eqref{eq:MFGC}
    satisfies uniform estimates
    on $\norminf{u}$,
    $\|u^i\|_{C^{1,2}}$ and $\|m^i\|_{C^{0}}$
    by Lemma \ref{lem:ubound}, 
    for $i=1,2$.
    Take $(u^1,m^1,\mu^1)$ and $(u^2,m^2,\mu^2)$
    two solutions to \eqref{eq:MFGC}.
    We define $u=u^1-u^2$, $m=m^1-m^2$
    and $\aa=\aa^{\mu^1}-\aa^{\mu^2}$.

    In this proof $C>0$ is a constant
    which may differ from line to line
    and depends only on the constants in the assumptions,
    $\|u^i\|_{C^{1,2}}$ and $\|m^i\|_{C^{0}}$, 
    for $i=1,2$.

    We can repeat the proof of
    Lemma \ref{lem:distmu} replacing
    $\norminf[\bb_0]{m^1-m^2}$
    and $\norminf[\bb_0]{p^1-p^2}$
    respectively
    with $W_{q_1}\lp m^1,m^2\rp$
    and $\norminf[\bb_0]{p^1-p^2}$
    everywhere
    and we obtain that,
    \begin{equation}
        \label{eq:USTdistmu}
        \norminf{\aa(t)}
        \leq
        C\lp \norminf{\nabla_xu(t)}
        +W_{q_1}\lp m^1(t),m^2(t) \rp\rp,
    \end{equation}
    for any $t\in[0,T]$.   
    Let us consider $X^1$ and $X^2$ two random processes
    defined by
    \begin{align*}
        dX^1_t
        &=
        \aa^{\mu^1}(t,X^1)dt
        +\sqrt{2\nu}dW_t
        \\
        dX^2_t
        &=
        \aa^{\mu^2}(t,X^2)dt
        +\sqrt{2\nu}dW_t
        \\
        X^1_0
        &=
        X^2_0
        =X^0,
    \end{align*}
    where $X^0$ is a random variable on $\TT^d$ with
    law $m^0$ and $W$ is a Brownian motion independent of $X^0$.
    The respective laws of $\lp X^1_t,\aa^{\mu^1}(t,X^1_t)\rp$
    and $\lp X^2_t,\aa^{\mu^2}(t,X^2_t)\rp$ are $\mu^1(t)$ and
    $\mu^2(t)$.
    Then we obtain,
    \begin{align*}
        \EE\lb\labs X^1_t
        -X^2_t\rabs^{q_1}\rb^{\frac1{q_1}}
        &=
        \EE\lb\labs\int_0^t \aa^{\mu^1}\lp s,X^1_s\rp
        -\aa^{\mu^2}\lp s,X^2_s\rp ds\rabs^{q_1}\rb^{\frac1{q_1}}
        \\
        &\leq
        \int_0^t
        \EE\lb\labs \aa^{\mu^1}\lp s,X^1_s\rp
        -\aa^{\mu^2}\lp s,X^2_s\rp\rabs^{q_1}\rb^{\frac1{q_1}}
        ds
        \\
        &\leq
        \int_0^t
        \EE\lb\labs \aa^{\mu^1}\lp s,X^1_s\rp
        -\aa^{\mu^1}\lp s,X^2_s\rp\rabs^{q_1}\rb^{\frac1{q_1}}
        +\EE\lb\labs \aa^{\mu^1}\lp s,X^2_s\rp
        -\aa^{\mu^2}\lp s,X^2_s\rp\rabs^{q_1}\rb^{\frac1{q_1}}
        ds,
    \end{align*}
    where we used the triangle inequality for
    the $L^{q_1}$-norm twice.
    By the first additional assumption of the theorem
    and \ref{hypo:Hp},
    $\aa^{\mu^1}$ is Lipschitz continuous with respect
    to $x$ and its Lipschitz constant depends on
    $\norm{u^i}{C^{0,1}}{}$ and
    $\LL_{\infty}\lp\mu^1\rp$.
    Using the estimates from the proof of Proposition
    \ref{thm:ExiLowT},
    it only depends on the constants in the assumptions.
    This, the latter inequality
    and \eqref{eq:USTdistmu} imply
    \begin{equation*}
        \EE\lb\labs X^1_t
        -X^2_t\rabs^{q_1}\rb^{\frac1{q_1}}
        \leq
        C\int_0^t\EE\lb\labs X^1_s
        -X^2_s\rabs^{q_1}\rb^{\frac1{q_1}}ds
        +CT\sup_{0\leq t'\leq T}\lp \norminf{\nabla_xu(t')}
        +W_{q_1}\lp m^1(t'),m^2(t')\rp\rp.
    \end{equation*}
    This and Gronwall's inequality yield that,
    \begin{equation*}
        \sup_{0\leq t \leq T}
        \EE\lb\labs X^1_t
        -X^2_t\rabs^{q_1}\rb^{\frac1{q_1}}
        \leq
        CT\sup_{0\leq t\leq T}\lp \norminf{\nabla_xu(t)}
        +W_{q_1}\lp m^1(t),m^2(t)\rp\rp.
    \end{equation*}
    From now on, we assume that $T\leq \frac1{2C}$,
    so that $(1-CT)\geq \frac12$.
    Since $W_{q_1}(m^1(t),m^2(t))\leq
    \EE\lb\labs X^1_t-X^2_t\rabs^{q_1}\rb^{\frac1{q_1}}$,
    we obtain:
    \begin{equation}
        \label{eq:USTm}
        \sup_{0\leq t \leq T}
        W_{q_1}\lp m^1(t),m^2(t)\rp
        \leq
        CT\sup_{0\leq t\leq T} \norminf{\nabla_xu(t)}.
    \end{equation}
    Hence $u$ satisfies the following
    equation,
    \begin{equation*}
        \lc
        \begin{aligned}
        &-\ptt u
        -\nu\Delta u 
        =
        -H\lp x,\nabla_xu^1,\mu^1\rp
        +H\lp x,\nabla_xu^2,\mu^2\rp,
        \\
        &u(T,x)
        =
        g(x,m^1(T))
        -g(x,m^2(T)).
        \end{aligned}
        \right.
    \end{equation*}
    The right-hand side of the first line
    can be estimated
    in absolute value
    from above
    as follows:
    \begin{equation*}
        \labs
        H\lp x,\nabla_xu^1,\mu^1\rp
        -H\lp x,\nabla_xu^2,\mu^2\rp
        \rabs
        \leq
        C\sup_{0\leq t'\leq T}\norminf{\nabla_xu(t')},
    \end{equation*}
    by \ref{hypo:Hlip},
    \eqref{eq:USTdistmu}
    and \eqref{eq:USTm}.
    Since $u(T,\cdot)\in C^{1+\bb}\lp\TT^d\rp$,
    Theorem $6.48$ in \cite{MR1465184} yields that 
    $u\in C^{\frac12+\frac{\beta}2,1+\beta}\lp[0,T]\times\TT^d\rp$
    and it satisfies:
    \begin{multline*}
        \sup_{t\in[0,T]}
        \norminf{\nabla_xu(t)}
        \leq
        \norminf{\nabla_x\lp 
        g(\cdot,m^1(T))
        -g(\cdot,m^2(T))\rp}
        \\
        +CT^{\frac{\bb}2}
        \sup_{t\in[0,T]}
        \lp\norminf{\nabla_xu(t)}
        +\norm{g(\cdot,m^1(T))
        -g(\cdot,m^2(T))}{C^{1+\bb}}{}\rp.
    \end{multline*}
    This, \eqref{eq:gregm} and \eqref{eq:USTm}
    yield,
    \begin{equation*}
        \sup_{t\in[0,T]}
        \norminf{\nabla_xu(t)}
        \leq
        CT^{\frac{\bb}2}\sup_{t\in[0,T]}
        \norminf{\nabla_xu(t)}.
    \end{equation*}
    Thus if we suppose furthermore that $T<C^{-\frac2{\bb}}$,
    then $\nabla_xu=0$,
    so $m=0$ by \eqref{eq:USTm},
    then $\mu^1=\mu^2$ by \eqref{eq:USTdistmu},
    and finally $u^1$ and $u^2$ solve
    the same Hamilton-Jacobi-Bellman equation
    with the same terminal condition,
    so by uniqueness $u=0$.
    
    Therefore, we proved the uniqueness for $T<T_1$
    where $T_1$ is defined by
    $T_1=\min T_0,\lp C^{-\frac2{\bb}},C^{-1}\rp$.
\end{proof}

\section{Applications}
\label{sec:appli}
Here, we are going to work on $\TT^d$,
while it would be more realistic to work
in the whole space $\RR^d$
for the applications considered below.
We would like to recall that
the existence results
contained in the present work hold for
MFGC systems on $\RR^d$
using the method
introduced in \cite{MonoMFGC}
to pass from the torus to the
whole Euclidean space.
Therefore, the conclusions
of this section may be adapted
to treat the same applications on $\RR^d$.

\subsection{Exhaustible ressource model with nonpositively correlated ressources}
\label{subsec:exhau}
This model is often referred to
as Bertrand and Cournot competition model for exhaustible ressources,
introduced in the independent works
of Cournot \cite{cournot1838recherches} and Bertrand \cite{Bertrand};
its mean field game version in dimension one was introduced in
\cite{MR2762362} and numerically analyzed in
\cite{MR3359708};
for theoretical results see
\cite{2019arXiv190205461F,MR3755719,MR4064472,MR3888969}.
We consider a continuum of producers selling exhaustible ressources.
The production of a representative agent is $(q_t)_{t\in[0,T]}$;
the agents differ in their production capacities
$X_t\in\TT$ (the state variable),
that satifies,
\begin{equation*}
    dX_t
    =
    -q_tdt
    +\sqrt{2\nu} dW_t,
\end{equation*}
where $\nu>0$ and $W$ is a Brownian motion.
Each producer is selling a different ressource
and has her own consumers.
However, the ressources are substitutable
and any consumer may change her mind
and buy from a competitor
depending on the degree of competition
in the game (which is characterized by $\ee$ in the
linear demand case below for instance).
Therefore, the selling price per unit of ressource
that a producer can make when
she sales $q$ units of ressource,
depends naturally on $q$
and on the quantity produced by the other
agents.
The price satisfies a
supply-demand relationship,
and is given by
$P\lp q,\qo\rp$,
where $\qo$ is the aggregate demand
which depends on 
the overall distribution of productions
of the agents.
A producer tries to 
maximize her profit,
or equivalently to minimize
the following quantity,
\begin{equation*}
    \EE\lb\int_0^T -P(q_t,\qo_t)\cdot q_tdt
    +g\lp X_T\rp\rb,
\end{equation*}
where $g$ is a terminal cost
which often penalizes the producers
who have non-zero production capacities at the end of the game.
In the Cournot competition, see \cite{cournot1838recherches},
a producer is controling her production $q$.
Like the
MFG version of the Bertrand and Cournot competition
introduced
in \cite{MR3359708},
here we consider the Bertrand formulation \cite{Bertrand},
where an agent directly controls her selling price
$\aa=P(q,\qo)$.
After inverting the latter equality,
the production can be viewed as a function
of the price and the mean field.
Mathematically this corresponds to writing
$q=Q\lp \aa,\aao\rp$.

In \cite{MR3359708},
the authors considered a linear demand system depending on
$\qo_{\text{lin}}=\int_{\TT}q(x)dm(x)$,
and a price satisfying
$\aa=P_{\text{lin}}(q,\qo_{\text{lin}})=1-q-\ee \qo_{\text{lin}}$.
In this case, the running cost $L^{\text{lin}}$
and its Legendre transform $H^{\text{lin}}$
are defined by
\begin{align*}
    L^{\text{lin}}\lp \aa,\mu\rp
    &=
    \aa^2
    +\frac{\ee}{1+\ee}\aa\aao
    -\frac1{1+\ee}\aa,
    \\
    H^{\text{lin}}\lp p,\mu\rp
    &=
    \frac14\lp p+\frac{\ee}{1+\ee}\aao-\frac1{1+\ee}\rp^2,
\end{align*}
where $\aa,p\in\RR$, $\mu\in\cP\lp\TT\times\RR\rp$
and $\aao$ is defined by
$\aao=\int_{\TT\times\RR}\aat d\mu(y,\aat)$.
Therefore the system of MFGC has the following form,
\begin{equation}
    \label{eq:exhaulin}
    \lc
    \begin{aligned}
        &-\ptt u
        -\nu\Delta u
        +
        \frac14\lp \nabla_xu+\frac{\ee}{1+\ee}\aao-\frac1{1+\ee}\rp^2
        =
        0,
        \\
        &\ptt m
        -\nu\Delta m
        -\divo\lp\frac12\lp \nabla_xu+\frac{\ee}{1+\ee}\aao-\frac1{1+\ee}\rp m\rp
        =
        0,
        \\
        &\aao(t)
        =
        -\int_{\TT}
        \frac12\lp \nabla_xu+\frac{\ee}{1+\ee}\aao(t)-\frac1{1+\ee}\rp
        dm(t,x),
        \\
        &u(T,x)
        =
        g(x),
        \\
        &m(0,x)
        =
        m_0(x),
    \end{aligned}
    \right.
\end{equation}
for $(t,x)\in[0,T]\times\TT$.
Roughly speaking, $\ee=0$ corresponds
to a monopoly in which a producer does not
suffer from competition,
and she plays as if she was alone in the game.
Conversely, $\ee=\infty$ stands for all the producers
selling the same ressource and the consumers not having
any preference.

Here, Theorem \ref{thm:MFGCexi} \ref{exisub:Hx}
implies the following existence result.
\begin{proposition}
    If $m_0$ and $g$ satisfy \ref{hypo:gregx}
    and \ref{hypo:mreg},
    there exists a solution to \eqref{eq:exhaulin}
    for any $\ee\in(0,\infty)$.
\end{proposition}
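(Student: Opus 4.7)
The plan is to verify the hypotheses of Theorem~\ref{thm:MFGCexi}~\ref{exisub:Hx} for the Hamiltonian $H^{\text{lin}}$ and then conclude. The driving observations are that $H^{\text{lin}}$ is quadratic in $p$ (so $q=q'=2$), that its dependence on $\mu$ enters only through the scalar moment $\aao=\int\aat\,d\mu(y,\aat)$, and that $H^{\text{lin}}$ does not depend on $x$. Writing $A(\mu)=\frac{\ee}{1+\ee}\aao-\frac{1}{1+\ee}$ so that $H^{\text{lin}}(p,\mu)=\frac{1}{4}\bigl(p+A(\mu)\bigr)^2$, we have $H_x\equiv 0$ and the extra assumption in \ref{exisub:Hx} is trivially satisfied, which is the main reason for invoking this particular part of the theorem.

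First, I would dispose of the regularity assumption \ref{hypo:Hp}: $H^{\text{lin}}$ and $H^{\text{lin}}_p$ are affine in $p$ with coefficients depending continuously on $\aao$, which is itself a weak-$*$ continuous functional on $\cP_{\infty,R}\lp\TT\times\RR\rp$ for any $R>0$. Taking $q_0=1$, the fixed-point structural assumptions \ref{hypo:Hinvert} and \ref{hypo:Hcontrac} follow at once: \ref{hypo:Hinvert} from Jensen's inequality $|\aao|\leq \LL_1(\mu)$, and \ref{hypo:Hcontrac} from the identity $\aao^1-\aao^2=\int(\aa^{\mu^1}-\aa^{\mu^2})\,dm$, which is valid because $\mu^1$ and $\mu^2$ share the marginal $m$; both inequalities hold with the contraction constant $\ll_0=\frac{\ee}{2(1+\ee)}<\frac{1}{2}$. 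Assumption \ref{hypo:Hlip} will be obtained by splitting $\aao^1-\aao^2=\int(\aa^{\mu^1}-\aa^{\mu^2})\,dm^1+\int\aa^{\mu^2}\,d(m^1-m^2)$ and using the a priori bound $\norminf{\aa^{\mu^i}}\leq R$, which respectively control the two resulting terms by $\norminf{\aa^{\mu^1}-\aa^{\mu^2}}$ and, after an interpolation exploiting the $L^\infty$ bound on $m^i$, by $\norminf[\bb_0]{m^1-m^2}$.

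It then remains to verify the growth conditions. Assumption \ref{hypo:Hx} is trivial and \ref{hypo:Hh} follows from $H^{\text{lin}}(x,0,\mu)=\frac{1}{4}A(\mu)^2$ by expanding the square and using Jensen. The central identity for \ref{hypo:Hpower} is
\[
H^{\text{lin}}_p\cdot p - H^{\text{lin}} = \frac{1}{4}\bigl(|p|^2 - A(\mu)^2\bigr),
\]
which, combined with Young's inequality on $A(\mu)^2$, produces a lower bound of the required form with $q=2$. The main obstacle will be the simultaneous choice of $(C_0,\ll_0,\ll_1)$: $C_0$ must be taken at least equal to $4$ so that $C_0^{-1}\leq\frac{1}{4}$ absorbs the $|p|^2$ coefficient on the left-hand side, while the resulting $\ll_1$ produced by Young's inequality on $A(\mu)^2$ must still fit inside the strict constraint $\ll_1<(1-\ll_0)^{q'}/C_0^{q'}$; carefully tracking these constants is the delicate part of the argument. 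Once this book-keeping is performed, Theorem~\ref{thm:MFGCexi}~\ref{exisub:Hx} delivers a solution to \eqref{eq:exhaulin}.
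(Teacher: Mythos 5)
Your proposal takes the same route as the paper (Theorem~\ref{thm:MFGCexi}~\ref{exisub:Hx} with $q=2$, $q_0=1$, $\ll_0=\frac{\ee}{2(1+\ee)}$), and the verification of \ref{hypo:Hp}, \ref{hypo:Hinvert}, \ref{hypo:Hcontrac}, \ref{hypo:Hlip}, \ref{hypo:Hh} and the triviality of \ref{hypo:Hx} are all fine. But the ``delicate book-keeping'' you defer in \ref{hypo:Hpower} is precisely where the argument would break if carried out as you describe, and it must be resolved rather than postponed. You decide that $C_0\geq 4$ is forced so that $C_0^{-1}\leq\frac14$, and then feed that same $C_0$ into the constraint $\ll_1<(1-\ll_0)^{q'}/C_0^{q'}$. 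With $q'=2$, $C_0\geq 4$ and $\ll_0\in(0,\frac12)$ this gives $\ll_1<\frac{(1-\ll_0)^2}{16}\leq\frac{1}{16}$, whereas the sharpest $\ll_1$ Young's inequality produces for $A(\mu)^2$ behaves like $\frac{\ee^2}{(1+\ee)^2}$ (or $\frac{\ee}{1+\ee}$ with the convexity trick $(a|\aao|+b)^2\leq a|\aao|^2+b$ for $a+b=1$), which already exceeds $\frac{1}{16}$ once $\ee$ is not small. So your chain of choices only proves existence for small $\ee$, not for all $\ee\in(0,\infty)$.

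The missing ingredient is the paper's explicit remark, placed right after \ref{hypo:Hpower}, that the constant $C_0$ appearing in the admissibility constraint $\ll_1<(1-\ll_0)^{q'}/C_0^{q'}$ is to be identified with the $C_0$ of \ref{hypo:Hinvert}, and is therefore decoupled from the multiplicative $C_0^{-1}$ in the inequality of \ref{hypo:Hpower} itself (which can be a separate constant, here $\frac14$). Since $|H^{\text{lin}}_p(x,p,\mu)|\leq\frac12(1+|p|)+\frac{\ee}{2(1+\ee)}\LL_1(\mu)$, the constant in \ref{hypo:Hinvert} can be taken to be $C_0=\frac12$, and the constraint becomes $\ll_1<4\bigl(\frac{2+\ee}{2(1+\ee)}\bigr)^2=\bigl(\frac{2+\ee}{1+\ee}\bigr)^2$, which is strictly larger than $1$ for every $\ee>0$. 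One may then take $\ll_1=1$ (using $A(\mu)^2\leq\LL_1(\mu)^2+1$) and conclude. Without the decoupling of the two $C_0$'s, the smallness constraint on $\ll_1$ cannot be met uniformly in $\ee$, so the proof does not go through as written.
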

To prove it,
we may take
$q=2$, $q_0=1$,
$\ll_0=\frac{\ee}{2(1+\ee)}$,
$\ll_1=1$,
and $C_0=\frac12$
in \ref{hypo:Hinvert};
then we check the assumptions
of Theorem \ref{thm:MFGCexi} \ref{exisub:Hx}.
In this case, the inequality in \ref{hypo:Hpower}
has the form $1<\lp\frac{2+\ee}{1+\ee}\rp^2$,
and is satisfied for any $\ee\in(0,\infty)$.

Here, the Lagrangian $L^{\text{lin}}$
satisfies a monotonicity
assumption, but the latter existence result
does not take advantage of it.
We refer to \cite{MonoMFGC} for
a uniqueness result and an other
existence result for the solution
to \eqref{eq:exhaulin} using this
monotonicity assumption.
Generalizations
of \eqref{eq:exhaulin} to larger
dimensions with more general
Hamiltonians and prices are also
discussed in \cite{MonoMFGC}
under the monotonicty assumption.

In what follows, we provide a simple
example of a generalization
of \eqref{eq:exhaulin} in which
the monotonicity assumption
does not hold and the results
in \cite{MonoMFGC} do not apply anymore.
However, the results in the present work may
hold in some cases
even without
the monotonicity assumption.

Let us consider a model in which every
producer sells $d$ different kinds of ressources.
The price of each ressource depends on the mean field
like in \eqref{eq:exhaulin}.
Namely,
we take $Q=M\aao-\aa$
which is now a $d$-dimensional vector
and where $M\in\RR^{d\times d}$ is a given matrix.
This leads to the following MFGC system,
\begin{equation}
    \label{eq:exhauNC}
    \lc
    \begin{aligned}
        &-\ptt u
        -\nu\Delta u
        +
        \frac14\lp \nabla_xu+M\aao\rp^2
        =
        f(x,m),
        \\
        &\ptt m
        -\nu\Delta m
        -\divo\lp\frac12\lp\nabla_xu+M\aao\rp m\rp
        =
        0,
        \\
        &\aao(t)
        =
        -\lp I_d+\frac12M\rp^{-1}
        \int_{\RR^d}
        \nabla_xu(t,x)
        dm(t,x),
        \\
        &u(T,x)
        =
        g(x),
        \\
        &m(0,x)
        =
        m_0(x),
    \end{aligned}
    \right.
\end{equation}

\begin{proposition}
    Assume
    \ref{hypo:gregx},
    \ref{hypo:mreg},
    that $M$ has an operator norm smaller than $1$,
    and that $f$ is 
    continuous,
    and differentiable with respect to $x$
    with continuous derivatives.
    There exists a solution to \eqref{eq:exhau}.
\end{proposition}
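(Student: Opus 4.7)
My plan is to recast system \eqref{eq:exhauNC} as an instance of \eqref{eq:MFGC} for the Hamiltonian
\[
    H(x,p,\mu)
    =
    \tfrac14 \labs p + M\bar\alpha(\mu)\rabs^2
    - f(x,m),
    \qquad
    \bar\alpha(\mu)
    =
    \int_{\TT^d\times\RR^d}
    \tilde\alpha \, d\mu(y,\tilde\alpha),
\]
and then invoke Theorem \ref{thm:MFGCexi}\ref{exisub:Hx}, exactly as was done for the linear case \eqref{eq:exhaulin}. The first step is to verify this equivalence: one has $H_p = \tfrac12(p + M\bar\alpha)$, and averaging the identity $\alpha^{\mu} = -H_p$ against $m$ yields a linear equation for $\bar\alpha$ with operator $I_d + \tfrac12 M$, which is uniquely solvable because $\|M\|<1<2$ makes $I_d + \tfrac12 M$ invertible.

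The bulk of the argument is the verification of the structural assumptions with $q=q'=2$ and $q_0=1$. The straightforward ones are \ref{hypo:Hp} (from the quadratic form of $H$ together with the weak-$*$ continuity of $\mu \mapsto \bar\alpha(\mu)$ on $\cP_{\infty,R}(\TT^d\times\RR^d)$); \ref{hypo:Hinvert} and \ref{hypo:Hcontrac}, which both give $C_0 = \tfrac12$ and $\ll_0 = \|M\|/2 < \tfrac12$ via $|\bar\alpha(\mu)| \leq \LL_1(\mu)$ and the linearity of $\mu \mapsto \bar\alpha$; \ref{hypo:Hh} with $\ll_2 = \|M\|^2/4$; and the strengthened $H_x$-bound \eqref{eq:ExiLowSpaDep} required by Proposition \ref{thm:ExiLowSpaDep}, which is immediate because $H_x = -f_x$ is uniformly bounded. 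For \ref{hypo:Hlip}, I would decompose $\bar\alpha^1 - \bar\alpha^2 = \int(\alpha^{\mu^1} - \alpha^{\mu^2})\,dm^1 + \int \alpha^{\mu^2}\,d(m^1 - m^2)$, use the uniform bound on $\alpha^{\mu^i}$, and then apply the standard Hölder interpolation $\norminf{m^1-m^2} \leq C\norminf[\bb_0]{m^1-m^2}$ valid on the precompact class of $m^i$ considered in that assumption; the $f$-contribution is controlled by the continuity of $f$ in $m$.

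The main subtlety lies in \ref{hypo:Hpower}. A direct expansion gives the clean identity
\[
    H_p(x,p,\mu)\cdot p - H(x,p,\mu)
    =
    \tfrac14 |p|^2 - \tfrac14 |M\bar\alpha|^2 + f(x,m)
    \geq
    \tfrac14 \lp |p|^2 - \|M\|^2 \LL_1(\mu)^2\rp - \norminf{f},
\]
so the inequality of \ref{hypo:Hpower} is satisfied with $C_0 = 4$ and $\ll_1 = \|M\|^2$. Following the bookkeeping convention described in the remark after Corollary \ref{cor:small_param} and used in the treatment of \eqref{eq:exhaulin}, the constant $C_0$ that enters the compatibility inequality $\ll_1 < (1-\ll_0)^{q'}/C_0^{q'}$ is the one of \ref{hypo:Hinvert}, namely $\tfrac12$. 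The compatibility inequality then reduces to $\|M\|^2 < (2-\|M\|)^2$, which is precisely equivalent to the hypothesis $\|M\| < 1$. The main obstacle in carrying the proof out is thus the careful separation of the two \emph{a priori} different occurrences of $C_0$ in \ref{hypo:Hinvert} and \ref{hypo:Hpower}; once this is done correctly, the sharp threshold $\|M\| < 1$ emerges naturally and the existence of a solution follows from Proposition \ref{thm:ExiLowSpaDep}.
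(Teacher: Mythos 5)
Your proof is correct and follows essentially the same route as the paper: take $q=q'=2$, $q_0=1$, $C_0=\tfrac12$ and $\ll_0=\|M\|/2$ in \ref{hypo:Hinvert}, verify \ref{hypo:Hh}--\ref{hypo:Hpower} and the strengthened $H_x$-bound, and invoke Proposition \ref{thm:ExiLowSpaDep}. The paper takes $\ll_1=1$ (absorbing $\|M\|^2<1$ into the verification of \ref{hypo:Hpower}) whereas you take $\ll_1=\|M\|^2$, but both reduce the compatibility inequality $\ll_1<(1-\ll_0)^{q'}/C_0^{q'}$ (with $C_0$ from \ref{hypo:Hinvert}) to the same threshold $\|M\|<1$, so the two are equivalent.
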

The proof consists in taking
$q=2$,
$q_0=1$,
$\ll_1=1$,
$C_0=\frac12$
in \ref{hypo:Hinvert},
and
$\ll_0=\frac{\norm{M}{}}2$,
where $\norm{M}{}$
is the operator norm of $M$;
and we check the assumptions
of Theorem \ref{thm:MFGCexi} \ref{exisub:Hx}.

The monotonicity assumption
discussed in \cite{MonoMFGC}
is equivalent to assuming
that $M$ is a positive semi-definite matrix.
Here, we do not make such an assumpion.

What we have in mind in the latter example
is the case where the prices of the different
ressources may be negatively correlated,
like cars and oil
(if the production of cars increases,
then the demand for oil also increases
and  the price of oil rises
while the price of cars decreases),
or pesticides and medicines,
or gold and other raw materiels.
To our knowledge,
such a generalization of the exhaustible ressource model
to negatively correlated ressources
is new in the MFG literature.

More generally, we believe
that our results hold for the following
MFGC system under various different 
sets of assumptions that we will not
detailed here,
\begin{equation}
    \label{eq:exhau}
    \lc
    \begin{aligned}
        &-\ptt u
        -\nu\Delta u
        +
        H\lp x,\nabla_xu+Q(t,x,\mu)\rp
        =
        f(t,x,m(t)),
        \\
        &\ptt m
        -\nu\Delta m
        -\divo\lp H_p\lp  x,\nabla_xu+Q(t,x,\mu)\rp m\rp
        =
        0,
        \\
        &\mu(t)
        =
        \Bigl(
        I_d,
        -H_p\lp \cdot,\nabla_xu(t,\cdot)+Q(t,\cdot,\mu(t))\rp\#m(t)
        \\
        &u(T,x)
        =
        g(x,m(T)),
        \\
        &m(0,x)
        =
        m_0(x),
    \end{aligned}
    \right.
\end{equation}
where $Q:[0,T]\times\TT^d\times\cP\lp\TT^d\times\RR^d\rp
\to \RR^d$ is a vector characterizing the mean field
interactions.

\subsection{Price impact models with bid and ask prices}
\label{subsec:price_impact}
The price impact model without bid and ask prices
is inspired by the Almgren and Chriss's model \cite{Almgren2000OptimalEO},
and was introduced in the MFG literature in \cite{MR3805247}
and \cite{MR3325272}
where existence and uniqueness results are proved
when the admissible controls stay in a compact set.
Here we consider an extension with bid and ask prices.

We suppose that a continuum of agents are trading an asset,
the state of a representative agent is $X_t$ the
amount of this asset she owns.
Her control $\aa$ is the quantity she buys (if $\aa\geq 0$)
or sell (if $\aa<0$).
The state space is the one-dimensional torus $\TT$,
and $X_t$ is given by,
\begin{equation*}
    dX_t
    =
    \aa_tdt
    +\sigma dW_t,
\end{equation*}
where $W$ is a Brownian motion, and $\sigma>0$ is a real constant.
We define $S_t$ as the asking price of the asset,
and $\ee\lp \mu(t)\rp$ as the difference between the bidding and
asking prices, where $\mu(t)$ is the law
of $(X_t,\aa_t)$.
The agent buys at the bidding price $S_t+\ee\lp\mu_t\rp$,
thus her cash is given by
\begin{equation*}
    dK_t
    =
    -\lp \aa_t S_t
    +\aa_t \ee\lp\mu(t)\rp+\ell(\aa_t)\rp dt,
\end{equation*}
where $\ell$ is a differentiable function standing for the transaction cost.
The price $S_t$ evolves accordingly with the amount of transactions at time $t$,
it satisfies the following SDE,
\begin{align*}
    &dS_t
    =
    A\lp\mu(t)\rp dt,
    \\
    &\text{where }
    A\lp\mu(t)\rp
    =
    \int_{\TT\times\RR}
    \ell'(\aa)d\mu(t,x,\aa),
\end{align*}
The wealth of a representative agent is given by
$V_t=V_0+X_tS_t+K_t$ and it satisfies the following SDE,
\begin{equation}
    dV_t
    =
    \lp X_tA\lp \mu(t)\rp
    -\ell\lp\aa_t\rp
    -\ee\lp \mu(t)\rp\aa_t\rp dt
    +\sigma S_tdW_t.
\end{equation}
The objective function that she will try to maximize is given by,
\begin{equation*}
    \EE\lb V_T
    -\int_0^T f(X_t)dt
    -g(X_T)\rb,
\end{equation*}
where $f$ and $g$ are penalization costs for holding stocks.
Here, the Lagrangian and Hamiltonian are given by,
\begin{align*}
    L^{\text{PI}}\lp x,\aa,\mu\rp
    &=
    \ell(\aa)+\aa\ee\lp\mu\rp
    -xA\lp\mu\rp,
    \\
    H^{\text{PI}}\lp x,p,\mu\rp
    &=
    h\lp p+\ee\lp\mu\rp\rp
    +xA\lp\mu\rp,
\end{align*}
for $\lp x,\aa,\mu\rp\in\TT\times\RR\times\cP\lp\TT\times\RR\rp$,
where $h$ is the Legendre transform of $\ell$.

The linear-quadratic case with $\ee=0$
is treated in \cite{MR3752669}.
Here, taking $\ee=0$ corresponds
to assuming that the bidding and asking prices coincide.
In this case the optimal control is given by
$-h_p(p)$ and does not depend explicitely on $\mu$.
If $\ee\neq0$,
the optimal control depends explicitely on $\mu$
and $L^{\text{PI}}$ is not separable in $\aa$ and $\mu$,
this prevents us from using the results in 
\cite{MR3752669}.

Let us give an example of choices for
the functions $\ell$ and $\ee$
under which our result apply and a solution
of the MFGC price impact model exists.
\begin{proposition}
    Assume \ref{hypo:gregx},
    \ref{hypo:mreg},
    that $f$ is $C^1$,
    and that $c$ and $\ee$
    are respectively given by
    $\ell(\aa)=\frac{|\aa|^2}2$
    and $\ee\lp\mu\rp
    =\eet\lp\int_{\TT\times\RR}|\aa|^2d\mu\lp x,\aa\rp\rp^{\frac12}$,
    where $0<\eet<\frac12$.
    There exists a solution to \eqref{eq:MFGC}
    with $H^{\text{PI}}$.
\end{proposition}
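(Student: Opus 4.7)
The plan is to apply Theorem~\ref{thm:MFGCexi}\ref{exisub:Hx} to the Hamiltonian $H^{\text{PI}}(x,p,\mu) = \tfrac12(p+\eet\LL_2(\mu))^2 + xA(\mu)$, with exponents $q = 2$, $q'=2$, and $q_0 = 2$. This is the right case because the coupling enters $H^{\text{PI}}$ quadratically through $\LL_2(\mu)^2$ (which rules out \ref{exisub:H0q} and \ref{exisub:H0}), whereas the spatial derivative is only linear in the moment: $H^{\text{PI}}_x(x,p,\mu) = A(\mu)$ with $|A(\mu)|\le \LL_1(\mu)\le\LL_2(\mu) = \LL_{q_0}(\mu)^{q'-1}$ by Jensen, which is exactly the growth allowed by \ref{exisub:Hx}.

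First I would check the structural assumptions \ref{hypo:Hp}, \ref{hypo:Hinvert}, \ref{hypo:Hcontrac}, \ref{hypo:Hlip}. The Hamiltonian is smooth and quadratic (hence convex) in $p$, and the moment maps $\mu\mapsto\LL_2(\mu)$, $\mu\mapsto A(\mu)$ are weak-$*$ continuous on each $\cP_{\infty,R}(\TT\times\RR)$, which handles \ref{hypo:Hp}. From $H^{\text{PI}}_p(x,p,\mu) = p + \eet\LL_2(\mu)$ one reads $|H^{\text{PI}}_p|\le |p| + \eet\LL_2(\mu)$, giving \ref{hypo:Hinvert} with $\ll_0 = \eet$ and $C_0 = 1$; the reverse triangle inequality for the $L^2(m)$-norm yields $|\eet\LL_2(\mu^1)-\eet\LL_2(\mu^2)|\le \eet\|\aa^{\mu^1}-\aa^{\mu^2}\|_{L^2(m)}$, which is \ref{hypo:Hcontrac} with the same $\ll_0=\eet<1$. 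For \ref{hypo:Hlip} one splits $|\LL_2(\mu^1)^2 - \LL_2(\mu^2)^2|$ and $|A(\mu^1)-A(\mu^2)|$ into a first-marginal contribution controlled by $\|m^1-m^2\|_\infty$ (hence by $\|m^1-m^2\|_\infty^{\bb_0}$ on the compact torus) and a second-marginal contribution controlled by $\|\aa^{\mu^1}-\aa^{\mu^2}\|_\infty$.

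The crucial step is \ref{hypo:Hpower}. A direct calculation gives
\[
H^{\text{PI}}_p(x,p,\mu)\cdot p - H^{\text{PI}}(x,p,\mu) = \tfrac12 p^2 - \tfrac{\eet^2}{2}\LL_2(\mu)^2 - xA(\mu).
\]
Since $|xA(\mu)|\le \LL_2(\mu)$ on $\TT$, Young's inequality absorbs the linear term into the quadratic and constant terms, yielding \ref{hypo:Hpower} with $\ll_1$ that can be pushed arbitrarily close to $\eet^2$ from above (by taking the Young parameter small and the constant $C_0$ in \ref{hypo:Hpower} correspondingly large). The compatibility requirement $\ll_1 < (1-\ll_0)^{q'}/C_0^{q'}$, with $\ll_0 = \eet$ and $C_0 = 1$ coming from \ref{hypo:Hinvert}, then reduces to the strict inequality $\eet^2 < (1-\eet)^2$, which is exactly $\eet < 1/2$. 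The growth bound \ref{hypo:Hh} follows from $|H^{\text{PI}}(x,0,\mu)|\le \tfrac{\eet^2}{2}\LL_2(\mu)^2 + \LL_2(\mu)$, and \ref{hypo:Hx} is immediate.

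The main obstacle is this bookkeeping in \ref{hypo:Hpower}: one must simultaneously control the quadratic coupling $-\tfrac{\eet^2}{2}\LL_2(\mu)^2$ and the linear perturbation $-xA(\mu)$ while keeping the resulting $\ll_1$ strictly below $(1-\eet)^2$. The hypothesis $\eet < 1/2$ is precisely what separates $\eet^2$ from $(1-\eet)^2$ and opens up the window needed. Once all the hypotheses of Theorem~\ref{thm:MFGCexi}\ref{exisub:Hx} are in place, the theorem delivers a solution to \eqref{eq:MFGC} with Hamiltonian $H^{\text{PI}}$.
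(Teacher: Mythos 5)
Your proof is correct, and in fact you have chosen the \emph{correct} route through Theorem~\ref{thm:MFGCexi}~\ref{exisub:Hx} (i.e.\ Proposition~\ref{thm:ExiLowSpaDep}). The paper's own proof sketch claims to invoke Theorem~\ref{thm:MFGCexi}~\ref{exisub:H0} (Proposition~\ref{thm:ExiBound}), but that reference cannot apply here: $H^{\text{PI}}(x,0,\mu)=\tfrac{\eet^2}{2}\LL_2(\mu)^2+xA(\mu)$ grows \emph{quadratically} in $\LL_{q_0}(\mu)$, whereas \ref{exisub:H0} requires the linear bound $C_0(1+\LL_{q_0}(\mu)^{q'-1})=C_0(1+\LL_2(\mu))$, which fails as soon as $\eet>0$. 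The paper's own remark after this proposition implicitly concedes this by noting that \ref{thm:ExiBound} only applies when $\ee=0$, whereas the spatial-derivative bound (your choice) continues to apply. Your derivation of the remaining constants matches the paper's ($q=q_0=q'=2$, $\ll_0=\eet$, $C_0=1$ in \ref{hypo:Hinvert}, $\ll_1$ slightly above $\eet^2$, e.g.\ $\ll_1=\tfrac14$), and the window $\eet^2<(1-\eet)^2 \Leftrightarrow \eet<\tfrac12$ is exactly as you describe.

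Two small bookkeeping remarks, neither of which undermines the argument. First, when you absorb the linear term $-xA(\mu)$ in \ref{hypo:Hpower}, the Young parameter changes the multiplicative constant in front of $|p|^q$: the constant $C_0^{-1}$ appearing in \ref{hypo:Hpower} cannot be the same $C_0=1$ used in \ref{hypo:Hinvert} (since $H_p\cdot p-H=\tfrac12 p^2-\dots$ rather than $p^2-\dots$); this is fine because the paper explicitly allows the two occurrences of $C_0$ to be distinguished (the compatibility inequality $\ll_1<(1-\ll_0)^{q'}/C_0^{q'}$ uses the $C_0$ from \ref{hypo:Hinvert} only), but it is worth saying so explicitly rather than silently taking a larger $C_0$ in \ref{hypo:Hpower}. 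Second, in your verification of \ref{hypo:Hlip}, the dependence $\|m^1-m^2\|_\infty\mapsto|\LL_2(\mu^1)-\LL_2(\mu^2)|$ is only $\tfrac12$-H\"older (from $|\sqrt a-\sqrt b|\le\sqrt{|a-b|}$), which is why the exponent $\bb_0$ appears in \ref{hypo:Hlip}; this forces $\bb_0\le\tfrac12$ if one wants the cleanest bound, a detail your parenthetical ``(hence by $\|m^1-m^2\|_\infty^{\bb_0}$ on the compact torus)'' passes over a bit quickly.
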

This existence result is a consequence of
\ref{thm:MFGCexi} \ref{exisub:H0},
where the assumptions are satisfied for
$q=q_0=2$,
$\ll_0=\ee$,
$\ll_1=\frac14$
and $C_0=1$ in
\ref{hypo:Hinvert}.
We would like to insist on the fact that
Theorem \ref{thm:MFGCexi} \ref{exisub:H0}
provides the existence of solutions
for a wild class of Hamiltonian,
larger than the one of the latter
proposition and which goes beyond
the linear-quadratic case.

Let us mention that we would be interested in defining
the bidding price by $(1+\eet)S_t$,
where $\eet>0$.
The associated MFGC system cannot be 
using the conclusions of the present work
because the mean field interaction at time $t$
would depend not only on $\mu_t$
but on $\lp\mu_s\rp_{s\in[0,t]}$.
However,
we believe that existence holds
under similar assumptions as here, and
we plan to prove it in forthcoming works.

\subsection{First-order flocking model with velocity as controls}
\label{subsec:flocking}
Cucker and Smale proposed a form of Vicseck model in
\cite{MR2324245} to illustrate the behavior of flocks of birds.
This model is of second-order in the sense that
the state of an agent is given by a
couple $(x,v)$ standing for her position and
velocity respectively, and the equation of evolution
of her state involves considering her acceleration.

A game version of this model in which an agent controls her
acceleration has been introduced in \cite{5706992},
the authors derived a MFG formulation in the infinite horizon case.
Here we are interested in the finite horizon problem
which was studied in \cite{MR3325272,MR3752669}.
This model is still of second-order.
More precisely the state of an agent is
given by $(X_t,V_t)_{t\in[0,T]}$  respectively
her position and velocity,
two random processes which satisfy the following system of
stochastic differential equations,
\begin{align*}
    dX_t
    &=
    V_tdt,
    \\
    dV_t
    &=
    a_tdt
    +\sigma dW_t,
\end{align*}
where $a_t$ is the individual's acceleration vector
and her control,
$W$ is a $d$-dimentional Brownian motion,
and $\sigma\in \RR^{d\times d}$ is a positive definite matrix.
The cost that a representative agent tries to minimize is given by
\begin{equation*}
    \EE\lb\int_0^T
    \frac{\labs a_t\rabs^2}2
    +\frac12\labs\int_{\TT^d}\lp v-V^i_t\rp
    \vp\lp\labs x -X^i_t\rabs\rp d\mu(t,x,v)\rabs^2
    +f(X_t)dt\rb,
\end{equation*}
where $\mu(t)\in\cP\lp\TT^d\times\RR^d\rp$ is the joint distribution
of states and velocities of the agents,
$\vp$ is a $C^1$ nonincreasing function, and $f$ is a $C^1$
function modeling the spatial preferences of the agents
(for instance, we can take $f$ significantly
smaller in some areas which corresponds to where the food is).

Here we consider an alternative viewpoint in which an agent directly controls
her velocity.
This is a first-order model since the state of an agent is now
given by a vector of $\TT^d$, and the acceleration does not appear
anymore in the dynamics of a given agent,
which is given by
\begin{equation*}
    dX_t
    =
    \aa_tdt
    +\sigma dW_t.
\end{equation*}
Here, the cost that an agent tries to minimize is given by
\begin{equation*}
    \EE\lb\int_0^T
    \frac{\labs\aa_t\rabs^2}2
    +\frac12\labs\int_{\TT^d\times\RR^d}\lp \aat-\aa_t\rp
    \vp\lp\labs x -X^i_t\rabs\rp d\mu(t,x,\aat)\rabs^2
    +f\lp X_t\rp dt
    \rb.
\end{equation*}
First-order physical models are generally easier to study
than second-order models.
However the price we paid here
to go from a second-order model to a first-order model
is to consider a MFGC system instead of a MFG system
without interaction through the controls.

If $\mu\in\cP\lp\TT^d\times\RR^d\rp$ and $m\in\cP\lp\TT^d\rp$ are
such that $m$ is the marginal of $\mu$ with respect to $\TT^d$,
we define $A(x,\mu)$ and $Z(x,\mu)$ by,
\begin{equation*}
    \lc
    \begin{aligned}
        A(x,\mu)
        &=
        \int_{\TT^d\times\RR^d}
        \aat\vp\lp\labs x-y\rabs\rp d\mu(y,\aat),
        \\
        Z(x,\mu)
        &=
        \int_{\TT^d}
        \vp\lp\labs x-y\rabs\rp dm(y),
    \end{aligned}
    \right.
\end{equation*}
for $x\in\TT^d$.
We define the Lagrangian of the first-order flocking model by,
\begin{equation*}
    L^{\text{FM}}\lp x,\aa,\mu\rp
    =
    \frac{\labs\aa\rabs^2}2
    +\frac12\labs Z(x,\mu)\aa-A(x,\mu)\rabs^2
    +f(x),
\end{equation*}
for $\lp x,\aa,\mu\rp\in\TT^d\times\RR^d\times\cP\lp\TT^d\times\RR^d\rp$,
and the Hamiltonian by,
\begin{equation*}
    H^{\text{FM}}(x,p,\mu)
    =
    \frac1{2\lp 1+Z(x,\mu)^2\rp}
    \lp \labs p\rabs^2
    -2Z(x,\mu)A(x,\mu)\cdot p
    -\labs A(x,\mu)\rabs^2\rp
    -f(x),
\end{equation*}
for $p\in\RR^d$, such that $H^{\text{FM}}$
is the Legendre's transform of $L^{\text{FM}}$.

\begin{proposition}
    Under assumptions \ref{hypo:gregx}
    and \ref{hypo:mreg},
    there exists $T_0>0$
    such that if $T<T_0$,
    there exists a unique solution
    to \eqref{eq:MFGC}
    with $H^{\text{FM}}$.
\end{proposition}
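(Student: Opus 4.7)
The plan is to reduce the statement to a direct application of Proposition \ref{thm:ExiLowT} for existence and Theorem \ref{thm:UniLowT} for uniqueness, by verifying that the flocking Hamiltonian $H^{\text{FM}}$ fits the general framework, under the tacit assumption that $\|\vp\|_\infty$ is small enough (say $\|\vp\|_\infty<2$). First I identify the exponents: since $H^{\text{FM}}$ is a quadratic polynomial in $p$ with leading coefficient $(2(1+Z(x,\mu)^2))^{-1}\in(0,1/2]$, I take $q=q'=2$; since the interaction enters only through the first moment of controls $A(x,\mu)=\int \aat\,\vp(|x-y|)\,d\mu(y,\aat)$, I take $q_0=1$, so that $q_0\le q'$.

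Next I would verify the hypotheses of Proposition \ref{thm:ExiLowT} one by one. Smoothness, convexity in $p$ and continuity in $\mu$ (assumption \ref{hypo:Hp}) are immediate: $1+Z^2\ge 1$, and $Z$, $A$ are continuous bilinear/linear functionals of $\mu$ on $\cP_{\infty,R}(\TT^d\times\RR^d)$. A direct computation gives $H^{\text{FM}}_p(x,p,\mu)=(1+Z^2)^{-1}(p-ZA)$. Using the elementary bound $Z/(1+Z^2)\le 1/2$ and $|A|\le\|\vp\|_\infty\Lambda_1(\mu)$, I obtain $|H^{\text{FM}}_p|\le |p|+\tfrac12\|\vp\|_\infty\Lambda_1(\mu)$, which yields \ref{hypo:Hinvert} with $\lambda_0=\|\vp\|_\infty/2<1$. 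For \ref{hypo:Hcontrac}, the crucial observation is that when $\mu^1,\mu^2\in\cP_m(\TT^d\times\RR^d)$ share the marginal $m$, the functions $Z(\cdot,\mu^i)=\int\vp(|\cdot-y|)\,dm(y)$ coincide, so
\begin{equation*}
H^{\text{FM}}_p(x,p,\mu^1)-H^{\text{FM}}_p(x,p,\mu^2)=-\tfrac{Z}{1+Z^2}\bigl(A(x,\mu^1)-A(x,\mu^2)\bigr),
\end{equation*}
and the same bound $Z/(1+Z^2)\le 1/2$ combined with $|A(\cdot,\mu^1)-A(\cdot,\mu^2)|\le \|\vp\|_\infty\|\aa^{\mu^1}-\aa^{\mu^2}\|_{L^1(m)}$ gives the contraction with the same $\lambda_0$. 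The growth assumption \ref{hypo:Hh} follows from $H^{\text{FM}}(x,0,\mu)=-|A|^2/(2(1+Z^2))-f(x)$, which is bounded by $\|f\|_\infty+\tfrac12\|\vp\|_\infty^2\Lambda_1(\mu)^2$; \ref{hypo:Hx} follows from smoothness of $\vp$ and $f$ after expanding $\partial_x Z,\partial_x A$. Finally \ref{hypo:Hpower} reduces to the algebraic identity $H^{\text{FM}}_p\cdot p-H^{\text{FM}}=\frac{|p|^2+|A|^2}{2(1+Z^2)}+f$, bounded below by $|p|^2/(2(1+\|\vp\|_\infty^2))-\|f\|_\infty$, so I may take $\lambda_1=0$ and the constraint on $\lambda_1$ holds trivially. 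Assumption \ref{hypo:Hlip} follows from the continuity of $(m,\aa^{\mu})\mapsto(Z,A)$ in the norms appearing there, because $\vp\in C^1$ provides the Lipschitz/Hölder dependence on $m$, while $A$ depends linearly on $\aa^{\mu}$ with coefficient $\vp$. Proposition \ref{thm:ExiLowT} then gives existence for $T<T_0$.

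For uniqueness via Theorem \ref{thm:UniLowT}, I note that the running cost contains no explicit terminal cost, so we may take $g\equiv 0$ and \eqref{eq:gregm} is vacuous; $H^{\text{FM}}_p$ is affine in $p$, hence locally (globally) Lipschitz in $p$; and the Wasserstein refinement of \ref{hypo:Hlip} follows from Kantorovich duality applied to the $C^1$ test functions $y\mapsto\vp(|x-y|)$ and $y\mapsto \aa^{\mu^i}(y)\vp(|x-y|)$, giving $|Z(x,\mu^1)-Z(x,\mu^2)|\le \|\vp\|_{C^1}W_{q_1}(m^1,m^2)$ and a similar estimate for $A$ with an extra term in $\|\aa^{\mu^1}-\aa^{\mu^2}\|_\infty$. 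Theorem \ref{thm:UniLowT} then yields uniqueness for $T<T_1$, shrinking $T_0$ if necessary so that $T_0\le T_1$.

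The only non-routine point is the identification of the structural smallness required for \ref{hypo:Hinvert} and \ref{hypo:Hcontrac}, namely $\|\vp\|_\infty<2$; this is a genuine restriction on the interaction kernel (and is the analogue, for the flocking model, of the smallness on $M$ in paragraph \ref{subsec:exhau}). The remainder of the argument is a computation of constants and a bookkeeping of the hypotheses of the two abstract theorems.
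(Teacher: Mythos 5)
You correctly reduce the statement to Proposition \ref{thm:ExiLowT} (existence) and Theorem \ref{thm:UniLowT} (uniqueness), and most of the bookkeeping is sound. The genuine gap is the ``tacit assumption'' $\norminf{\vp}<2$: it is not part of the proposition, and it is an artifact of your choice $q_0=1$. Choose $q_0=\infty$ instead — neither Proposition \ref{thm:ExiLowT} nor Theorem \ref{thm:UniLowT} requires $q_0\leq q'$. Since $\mu=(I_d,\aa^{\mu})\#m$, one has
\begin{equation*}
|A(x,\mu)|=\Bigl|\int_{\TT^d}\aa^{\mu}(y)\vp\lp|x-y|\rp\,dm(y)\Bigr|\leq Z(x,\mu)\,\LL_\infty(\mu),
\end{equation*}
so the weight $\vp$ is absorbed into $Z$ rather than crudely bounded by $\norminf{\vp}$. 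Hence
\begin{equation*}
\labs H^{\text{FM}}_p(x,p,\mu)\rabs\leq |p|+\frac{Z(x,\mu)^2}{1+Z(x,\mu)^2}\,\LL_\infty(\mu),
\end{equation*}
and since $0\leq Z\leq\norminf{\vp}$, this gives \ref{hypo:Hinvert} with $\ll_0=\frac{\norminf{\vp}^2}{1+\norminf{\vp}^2}<1$ for \emph{every} kernel; the same Hölder estimate applied to $A(\cdot,\mu^1)-A(\cdot,\mu^2)$, using that $Z(\cdot,\mu^1)=Z(\cdot,\mu^2)$ when $\mu^1,\mu^2\in\cP_m$, gives \ref{hypo:Hcontrac} with the same $\ll_0$. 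Your value $\ll_0=\norminf{\vp}/2$ stems from estimating $Z/(1+Z^2)\leq\tfrac12$ and $|A|\leq\norminf{\vp}\LL_1(\mu)$ separately, which discards the factor $Z$ and is exactly what creates the spurious smallness.

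The remaining verifications (\ref{hypo:Hh}, \ref{hypo:Hx}, \ref{hypo:Hpower}, \ref{hypo:Hlip}, and the extra hypotheses of Theorem \ref{thm:UniLowT}) go through for $q_0=\infty$ essentially unchanged — in particular you already take $\ll_1=0$ in \ref{hypo:Hpower}, so the constraint linking $\ll_1$ to $\ll_0$ is vacuous, and since $\LL_1(\mu)\leq\LL_\infty(\mu)$ by Jensen, every bound you prove in terms of $\LL_1$ is also a bound in terms of $\LL_\infty$. The normalization $1+Z^2$ in the denominator of $H^{\text{FM}}$ is precisely what makes the model fit the framework without any restriction on $\vp$, in the same spirit as the explicit $Z_{q_0}$-normalization in the crowd model of paragraph \ref{subsec:crowd}. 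Your analogy with the operator-norm bound on $M$ in paragraph \ref{subsec:exhau} is therefore misleading: there the smallness is an explicit hypothesis of the corresponding proposition, here it is not.
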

Hereafter, we present an other
model for crowd motion
which is very similar
to the first-order flocking model
discussed above.
The main difference between these two models
is the normalization constants.
However, the assumptions and conclusions
of this work are more adapted
to the following crowd motion
model and we can derive more existence
results for it.
We believe that these results can
be adapted to the first-order
Cucker-Smale system.

\subsection{A model of crowd motion}
\label{subsec:crowd}
This model of crowd motion has been numerically studied
in \cite{achdou2020mean} in the quadratic case,
and has some similarities with the
first-order flocking model presented in the previous paragraph.
For $(x,\mu)\in\TT^d\times\cP\lp\TT^d\times\RR^d\rp$,
we define $V(x,\mu)$ and $Z_{q_0}(x,\mu)$ by
\begin{equation*}
    \lc
    \begin{aligned}
        V(x,\mu)
        &=
        \frac1{Z_{q_0}(x,\mu)}
        \int_{\TT^d\times\RR^d}
        \aat k(x,y) d\mu(y,\aat),
        \\
        Z_{q_0}(x,\mu)
        &=
        \lp\int_{\TT^d}
        k(x,y)^{q_0'}dm(y)\rp^{\frac1{q_0'}},
    \end{aligned}
    \right.
\end{equation*}
where $q_0\in(1,\infty]$,
$q_0'$ is the conjugate exponent of $q_0$,
$k:\TT^d\times\TT^d\rightarrow\RR_+$ is a nonnegative $C^1$ kernel,
and $m\in\cP\lp\TT^d\rp$ is the marginal 
of $\mu$ with respect to $\TT^d$.
The quantity $V(x,\mu)$ is called the average drift.

The state of a representative agent is given by her position $X_t\in\TT^d$
and she controls her velocity $\aa_t$,
\begin{equation*}
    dX_t
    =
    \aa_t dt
    +\sqrt{2\nu}dW_t.
\end{equation*}
Her objective is to minimize the cost given by,
\begin{equation*}
    \EE\lb\int_0^T
    \frac{\th}{a'}\labs \aa_t-\llt V(X_t,\mu(t))\rabs^{a'}
    +\frac{1-\th}{b'}\labs \aa_t\rabs^{b'}
    +f(X_t)dt
    +g(X_T)\rb,
\end{equation*}
where $-1<\llt<1$ and $0\leq\th\leq1$ are two constants 
standing for the preference of an individual to have a similar (resp. opposite)
control as the mainstream when $\llt>0$ (resp. $\llt<0$),
$f$ and $g$ are respectively the running cost
and the terminal cost which encode the
spatial preferences of the agents,
and $a',b'> 1$ are exponents.

Here, we take $q=\min(a,b)$. 
In this model we define the Lagrangian by,
\begin{equation}
    \label{eq:defLcrowd}
    L\lp x,\aa,\mu\rp
    =
    \frac{\th}{a'}\labs\aa-\llt V(x,\mu)\rabs^{a'}
    +\frac{1-\th}{b'}\labs\aa\rabs^{b'},
\end{equation}
and the Hamiltonian as its Legendre transform.
If $a=b=2$, $H$ is given by
    \begin{equation*}
        H(x,p,\mu)
        =
        \frac{|p|^2}2
        -\llt\th p \cdot V(x,\mu)
        -\frac{\llt^2\th(1-\th)}{2}|V(x,\mu)|^2.
    \end{equation*}
If $\th=1$, $H$ satisfies
    \begin{equation*}
        H(x,p,\mu)
        =
        \frac{1}{a}|p|^{a}
        -\llt p\cdot V(x,\mu).
    \end{equation*}
For other choices of the parameters $a$,$b$ and $\th$,
$H$ does not admit an explicit form.

\begin{proposition}
    \label{prop:exicrowd}
    Assume that $g$ and $m_0$ satisfy
    \ref{hypo:gregx} and \ref{hypo:mreg}
    respectively.
    There exists a solution to 
    \eqref{eq:MFGC} where $H$
    is the Legendre transform of
    $L$ given in \eqref{eq:defLcrowd},
    under one of the following
    assertions,
    \begin{enumerate}[a)]
        \item
            \label{exicrowd:ab}
            $q_0\leq q'$
            and $a\neq b$,
        \item
            \label{exicrowd:lowparam}
            $q_0\leq q'$
            and one of the following assertions
            is satisfied,
            \begin{enumerate}[\roman*)]
                \item
                    $\th<\th_0$,
                \item
                    $\th>1-\th_0$,
                \item
                    $\labs\llt\rabs<\ll_0$,
            \end{enumerate}
            where $\th_0,\ll_0\in(0,1)$ are constants
            coming from Theorem \ref{thm:MFGCexi} \ref{exisub:H0},
        \item
            \label{exicrowd:th1}
            $\th=1$,
        \item
            \label{exicrowd:k}
            $k(x,y)$ is constant,
        \item
            \label{exicrowd:T}
            $T<T_0$,
            where $T_0$ is a positive constant 
            coming from Theorem \ref{thm:MFGCexi} \ref{exisub:T}.
    \end{enumerate}
\end{proposition}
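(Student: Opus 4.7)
The plan is to verify, for each of the sub-cases \emph{\ref{exicrowd:ab}}--\emph{\ref{exicrowd:T}}, that the crowd-motion Hamiltonian $H$ obtained as the Legendre transform of $L$ in \eqref{eq:defLcrowd} satisfies the hypotheses of the corresponding part of Theorem \ref{thm:MFGCexi}. First I would record the basic estimates on $V$; then I would check the structural assumptions \ref{hypo:Hp}--\ref{hypo:Hlip} in the general setting; finally I would handle the five cases.

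The key estimates on $V$ come from Hölder's inequality with exponents $(q_0',q_0)$: one obtains the pointwise bound $|V(x,\mu)|\leq\LL_{q_0}(\mu)$ and, using that $Z_{q_0}(x,m)$ depends only on the first marginal $m$,
\begin{equation*}
    |V(x,\mu^1) - V(x,\mu^2)| \leq \norm{\aa^{\mu^1}-\aa^{\mu^2}}{L^{q_0}(m)}
\end{equation*}
for $\mu^1,\mu^2\in\cP_m\lp\TT^d\times\RR^d\rp$. The optimal control $\aa^{*}=-H_p(x,p,\mu)$ is defined implicitly by
\begin{equation*}
    \th|\aa^{*}-\llt V|^{a'-2}(\aa^{*}-\llt V) + (1-\th)|\aa^{*}|^{b'-2}\aa^{*} = -p,
\end{equation*}
and a direct computation via the implicit function theorem yields $|\partial_V\aa^{*}|\leq|\llt|$; combined with the two estimates above, this gives \ref{hypo:Hinvert} and \ref{hypo:Hcontrac} with contraction constant $\ll_0=|\llt|<1$, which is precisely where the standing assumption $-1<\llt<1$ is used.

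Next I would verify the remaining structural assumptions. \ref{hypo:Hp} follows from strict convexity of $L$ in $\aa$ and the $C^1$-regularity of $k$; \ref{hypo:Hlip} follows from the explicit expressions for $V$ and $Z_{q_0}$ together with the regularity of $k$. For \ref{hypo:Hh}, testing $\aa=0$ and $\aa=\llt V$ gives $\min_\aa L\leq C(|V|^{a'}+|V|^{b'})\leq C(1+\LL_{q_0}(\mu)^{q'})$, hence $|H(x,0,\mu)|\leq C_0+\ll_2\LL_{q_0}(\mu)^{q'}$; \ref{hypo:Hx} is handled similarly by differentiating in $x$. Assumption \ref{hypo:Hpower} reduces, via the identity $H_p\cdot p-H = L(x,\aa^{*},\mu)$, to the lower bound
\begin{equation*}
    \tfrac{\th}{a'}|\aa^{*}-\llt V|^{a'} + \tfrac{1-\th}{b'}|\aa^{*}|^{b'} \geq C_0^{-1}|\aa^{*}|^{q}-C_0^{-1}\ll_1\LL_{q_0}(\mu)^{q'}-C_0,
\end{equation*}
obtained by distinguishing the regimes $|\aa^{*}|\leq 2|\llt V|$ and $|\aa^{*}|\geq 2|\llt V|$ and using $|V|\leq\LL_{q_0}(\mu)$; the constraint $\ll_1<(1-\ll_0)^{q'}/C_0^{q'}$ is then satisfied by an appropriate choice of $C_0$, with room to spare.

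With the general framework in place, the case-by-case analysis is short. For \emph{\ref{exicrowd:ab}}, suppose without loss of generality $a<b$, so $q=a$ and $q'=a'>b'$; testing $\aa=\llt V$ gives $|H(x,0,\mu)|\leq \tfrac{1-\th}{b'}|\llt V|^{b'}\leq C\LL_{q_0}(\mu)^{b'}$ with $\tilde q = b' < q'$, hence \ref{exisub:H0q} applies. For \emph{\ref{exicrowd:lowparam}}, each of the three smallness conditions drives one of the constants $\ll_0$, $\ll_1$, $\ll_2$ to zero ($\ll_0=|\llt|\to 0$; the $\mu$-dependent part of $L$ vanishes when $\th\to 0$; and $\min L\to 0$ when $\th\to 1$ because $L$ vanishes at $\aa=\llt V$), so the smallness inequality $\ll_1+C_0\ll_2<(1-\ll_0)^{q'}/C_0^{q'}$ required by \ref{exisub:ll} holds. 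For \emph{\ref{exicrowd:th1}}, explicit Legendre transform gives $H(x,p,\mu)=\tfrac{|p|^a}{a}-\llt\, p\cdot V(x,\mu)$, hence $H(x,0,\mu)=0$ and \ref{exisub:H0} applies. For \emph{\ref{exicrowd:k}}, if $k$ is constant then $Z_{q_0}(x,m)$ is constant and $V(x,\mu)$ is independent of $x$, so $L$ and $H$ are independent of $x$, giving $H_x\equiv 0$ and \ref{exisub:Hx} applies trivially. Case \emph{\ref{exicrowd:T}} is a direct application of \ref{exisub:T}.

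The main obstacle is the explicit tracking of the constants $\ll_0$, $\ll_1$, $\ll_2$ and $C_0$ in terms of $\th$, $\llt$, $a$, $b$ in the mixed regime $\th\in(0,1)$, $a\neq b$, where $\aa^{*}$ admits no closed form. In particular, verifying \ref{hypo:Hpower} and the smallness inequality of \emph{\ref{exicrowd:lowparam}} requires a careful case analysis on the size of $\aa^{*}$ relative to $|\llt V|$, for which the implicit estimate $|\partial_V\aa^{*}|\leq|\llt|$ derived above is the key ingredient.
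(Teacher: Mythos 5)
Your overall plan — verify that $H$ satisfies \ref{hypo:Hp}--\ref{hypo:Hlip} (with $\ll_0=|\llt|$ coming from $\|D_V\aa^{*}\|\le|\llt|$ and \eqref{eq:Vbound}--\eqref{eq:Vmdiff}) and then invoke the matching part of Theorem \ref{thm:MFGCexi} — is the same as the paper's, which relegates the verification to Lemma \ref{lem:checkcrowd}. Cases \emph{\ref{exicrowd:th1}}, \emph{\ref{exicrowd:k}}, \emph{\ref{exicrowd:T}} are handled in the same way in both.

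Case \emph{\ref{exicrowd:ab}} contains a genuine gap: the reduction to $a<b$ is \emph{not} without loss of generality. The Lagrangian $L=\frac{\th}{a'}|\aa-\llt V|^{a'}+\frac{1-\th}{b'}|\aa|^{b'}$ is not symmetric under $a\leftrightarrow b$, since only the $a'$-term carries the shift $\llt V$. When $a>b$ one has $q=\min(a,b)=b$, hence $q'=b'$, and your test point $\aa=\llt V$ gives $|H(x,0,\mu)|\le\frac{1-\th}{b'}|\llt V|^{b'}\le C\,\LL_{q_0}(\mu)^{b'}$, i.e.\ $\qt=b'=q'$, which fails the strict inequality $\qt<q'$ required by Theorem \ref{thm:MFGCexi} \emph{\ref{exisub:H0q}}. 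The fix inside your own framework is to test $\aa=0$ when $a>b$: then $L(x,0,\mu)=\frac{\th}{a'}|\llt V|^{a'}$ yields $\qt=a'<b'=q'$. Once corrected, your test-point argument is actually more elementary than the paper's: the paper instead computes the large-$|V|$ asymptotics of $\aa^{*}(0,V)$ from the first-order condition \eqref{eq:estaa}--\eqref{eq:estaa2}, separately for $a>b$ and $a<b$, and deduces the growth exponent of $\Ht(0,V)$; one can check that both approaches land on the same exponent ($a'$ if $a>b$, $b'$ if $a<b$).

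For case \emph{\ref{exicrowd:lowparam}} the paper first restricts to $a=b$ (the case $a\neq b$ is already covered by \emph{\ref{exicrowd:ab}}), where $\Ht(0,V)$ has the closed form $-\frac{|\llt|^{a'}}{a'}\frac{\th(1-\th)^a+(1-\th)\th^a}{((1-\th)^{a-1}+\th^{a-1})^{a'}}|V|^{a'}$, and reads off directly that the coefficient $\ll_2$ tends to zero in each of the three regimes. Your more qualitative limiting argument reaches the same conclusion, but it would be cleaner to state the reduction to $a=b$ explicitly, since this is exactly what gives the closed form and avoids estimating $\aa^{*}$ implicitly in the mixed regime you flag at the end as the main obstacle.
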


\begin{proof}
    We refer to the appendix,
    Lemma \ref{lem:checkcrowd}
    for the proof that $H$
    satisfies
    \ref{hypo:Hp}-\ref{hypo:mreg},
    \ref{hypo:Hh}-\ref{hypo:Hpower},
    \ref{hypo:Hinvert}-\ref{hypo:Hcontrac},
    and \ref{hypo:Hlip}.
    The existence results
    \ref{exicrowd:th1},
    \ref{exicrowd:k} and 
    \ref{exicrowd:T}
    are direct consequences
    of Theorem \ref{thm:MFGCexi} \ref{exisub:H0},
    \ref{exisub:Hx} and
    \ref{exisub:T}
    respectively.

We define $\Lt(\aa,V)$ by
\begin{equation*}
    \Lt(\aa,V)
    =
    \frac{\th}{a'}\labs\aa-\llt V\rabs^{a'}
    +\frac{1-\th}{b'}\labs\aa\rabs^{b'},
\end{equation*}
for $\aa,V\in\RR^d$,
$\Ht(p,V)$ as the Legendre transform of $\Lt$
with respect to its first argument,
and $\aaa(p,V)$ as the unique control
which achieves the maximum in the definition
of $\Ht$ (it is unique because $\Lt$ is strictly convex with respect to $\aa$).

    \emph{Proof of \ref{exicrowd:ab}.}
    Take $V\in\RR^d$
    and $\aaa=\aaa(0, V)$,
    since $\aaa$ achieves the maximum
    in the definition of $\Ht(0,V)$,
    we know that
    \begin{equation*}
        0
        =
        \th\labs \aaa-\llt V\rabs^{a'-2}
        (\aaa-\llt V)
        +(1-\th)\labs \aaa\rabs^{b'-2}\aaa,
    \end{equation*}
    which implies
    \begin{equation}
        \label{eq:estaa}
        \th\labs \aaa-\llt V\rabs^{a'-1}
        =
        (1-\th)\labs \aaa\rabs^{b'-1},
    \end{equation}
    and then
    \begin{equation}
        \label{eq:estaa2}
        \lp
        \th^{a-1}(1-\th)^{2-a}
        |\aaa|^{\frac{(a'-2)(b'-1)}{a'-1}}
        + (1-\th)|\aaa|^{b'-2}
        \rp
        \aaa
        =
        \llt\th^{a-1}(1-\th)^{2-a}
        |\aaa|^{\frac{(a'-2)(b'-1)}{a'-1}}
        V.
    \end{equation}
    The two latter equalities yield 
    $\displaystyle{\lim_{V\rightarrow+\infty}
    |\aaa(0,V)|=+ \infty}$.
    We make out two cases:

    \begin{itemize}
        \item
            if $a>b$ then we have
            $\frac{(a'-2)(b'-1)}{a'-1}<b'-2$,
            and $|\aaa|=\displaystyle{\underset{+\infty}o(|V|)}$.
            Therefore, \eqref{eq:estaa2} yields
            \begin{equation*}
                |\aaa|^{b'-1-\frac{(a'-2)(b'-1)}{a'-1}}
                =
                \underset{+\infty}O(|V|),
            \end{equation*}
            and $b'-1-\frac{(a'-2)(b'-1)}{a'-1}
            =\frac{a-1}{b-1}>1$,
            so we obtain
            \begin{equation*}
                |\aaa|
                =
                \underset{+\infty}O
                \lp |V|^{\frac{b-1}{a-1}}\rp,
            \end{equation*}
            which yields
            \begin{equation*}
                \Ht(0,V)
                =
                \underset{+\infty}O
                \lp |V|^{a'}\rp
                +\underset{+\infty}O
                \lp |V|^{\frac{b-1}{a-1}b'}\rp,
            \end{equation*}
            with $a'<b'$,
            and $\frac{b-1}{a-1}b'<b'$,
            and $b=q$.
            
        \item if $a<b$ then we have
            $\frac{(a'-2)(b'-1)}{a'-1}>b'-2$,
            and $\aaa=\llt V+\displaystyle{\underset{+\infty}o(|V|)}$.
            Therefore, \eqref{eq:estaa2} yields
            \begin{equation*}
                \lp 1
                +\underset{+\infty}O
                \lp |V|^{b'-2-\frac{(a'-2)(b'-1)}{a'-1}}\rp
                \rp\aaa
                =
                \llt V.
            \end{equation*}
            We notice that
            $b'-2-\frac{(a'-2)(b'-1)}{a'-1}
            =
            \frac{b'-a'}{a'-1}<0$,
            and we obtain
            \begin{equation*}
                \aaa
                =
                \llt
                V
                +\underset{+\infty}O
                \lp |V|^{1+\frac{b'-a'}{a'-1}} \rp
                =
                \llt
                V
                +\underset{+\infty}O
                \lp |V|^{\frac{a-1}{b-1}} \rp.
            \end{equation*}
            This implies
            \begin{equation*}
                \Ht(0,V)
                =
                \underset{+\infty}O
                \lp |V|^{\frac{b'-1}{a'-1}a'} \rp,
                +\underset{+\infty}O
                \lp |V|^{b'} \rp,
            \end{equation*}
            with $b'<a'$,
            and $\frac{a-1}{b-1}a'<a'$,
            and $a=q$.
    \end{itemize}
    We conclude by \eqref{eq:Vbound} and
    Theorem \ref{thm:MFGCexi} \ref{exisub:H0q}. 

    \emph{Proof of \ref{exicrowd:lowparam}}

    Here, we assume that $a=b$
    since the case $a\neq b$
    is addressed in \ref{exicrowd:ab}.

    Take $V\in \RR^d$,
    and $\aaa=\aaa(0,V)$.
    In this case,
    $\Ht(0,V)$ admits an explicit form
    given by
    \begin{equation*}
        \Ht(0,V)
        =
        -\frac{\labs\llt\rabs^{a'}}{a'}
        \frac{\th(1-\th)^{a}+(1-\th)\th^{a}}
        {\lp
        (1-\th)^{a-1}
        +\th^{a-1}
        \rp^{a'}}
        |V|^{a'}.
    \end{equation*}
    Therefore,
    taking $\llt$, $\th$
    or $(1-\th)$ small enough
    allows one to conclude by
    \eqref{eq:Vbound} and
    Theorem \ref{thm:MFGCexi} \ref{exisub:ll}.

\end{proof}

{\bf Acknowledgements.}
I wish to express my gratitude to Y. Achdou
and P. Cardaliaguet for technical advices,
insightful comments and corrections.
The work was supported by the ANR project
MFG ANR-16-CE40-0015-01.

\bibliographystyle{plain}
\bibliography{MFGbiblio}

\begin{appendix}

\section{Verification of the assumptions
    for the model of crowd motion}
We start by establishing some properties of the function
$V$ in the following lemma.
\begin{lemma}
    \label{lem:GGinf}
    The function $V$ is $C^{1}$ with respect
    to $x$ and it satisfies
            \begin{equation}
                \label{eq:Vbound}
                \norminf{V\lp\cdot,\mu\rp}
                \leq
                \LL_{q_0}\lp\mu\rp,
            \end{equation}
            where $\mu\in\cP\lp\TT^d\times\RR^d\rp$.

            For $m\in\cP \lp \TT^d\rp$
            and  $\mu^1,\mu^2\in\cP_m\lp\TT^d\times\RR^d\rp$,
            the following inequality is satisfied,
            \begin{equation}
                \label{eq:Vmdiff}
                \norminf{V\lp\cdot,\mu^1\rp
                -V\lp\cdot,\mu^2\rp}
                \leq
                \norm{\aa^{\mu^1}-\aa^{\mu^2}}{L^{q_0}\lp m\rp}.
            \end{equation}

            For $R>0$,
            there exists $C_R>0$ a constant
            such that,
            \begin{equation}
                \label{eq:Vmudiff}
                \norminf{V\lp\cdot,\mu^1\rp
                -V\lp\cdot,\mu^2\rp}
                \leq
                C_R\lp
                \norminf{\aa^{\mu^1}-\aa^{\mu^2}}
                +\norminf{m^1-m^2}
                \rp,
            \end{equation}
            for $\lp m^i,\mu^i\rp$
            such that
            $m^i\in\cP\lp\TT^d\rp\cap C^0\lp\TT^d\rp$
            with $m^i\geq R^{-1}$,
            $\mu^i\in\cP\lp\TT^d\times\RR^d\rp$
            with $\aa^{\mu^i}\in C^0\lp\TT^d\times\RR^d\rp$
            and $\norminf{\aa^{\mu^i}}\leq R$,
            $i=1,2$.
\end{lemma}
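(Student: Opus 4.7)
The three inequalities all boil down to direct computations using H\"older's inequality and the particular structure of $V$ as a ratio; I will treat them in order of increasing difficulty.

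First, for \eqref{eq:Vbound}, the key observation is that the weight $k(x,y)$ appearing in the numerator depends only on $y$, so its $q_0'$-integral against $\mu$ coincides with the $q_0'$-integral against the first marginal $m$. Applying H\"older's inequality with conjugate exponents $q_0, q_0'$ to the numerator yields
\begin{equation*}
    \left| \int_{\TT^d\times\RR^d} \aat\, k(x,y)\, d\mu(y,\aat) \right|
    \leq
    \LL_{q_0}(\mu)\, Z_{q_0}(x,\mu),
\end{equation*}
and dividing by $Z_{q_0}(x,\mu)$ gives \eqref{eq:Vbound}. The $C^1$ regularity of $V$ in $x$ follows from differentiation under the integral sign: since $k \in C^1$ and $Z_{q_0}(x,\mu)$ admits a positive lower bound in $x$ (indeed $k$ is continuous and positive in the crowd-motion application, or more generally one may restrict to the set where $Z_{q_0}>0$), both numerator and denominator in the definition of $V$ are $C^1$ functions of $x$ with nonvanishing denominator.

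Next, for \eqref{eq:Vmdiff}, the crucial simplification is that $\mu^1$ and $\mu^2$ share the same first marginal $m$, so $Z_{q_0}(x,\mu^1) = Z_{q_0}(x,\mu^2)$. Consequently the difference can be written
\begin{equation*}
    V(x,\mu^1) - V(x,\mu^2)
    =
    \frac{1}{Z_{q_0}(x,\mu^1)}
    \int_{\TT^d} \bigl(\aa^{\mu^1}(y) - \aa^{\mu^2}(y)\bigr) k(x,y)\, dm(y),
\end{equation*}
and another application of H\"older's inequality, exactly as above, gives \eqref{eq:Vmdiff}.

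The main obstacle is \eqref{eq:Vmudiff}, where both the controls and the marginals differ. The plan is to insert an intermediate measure $\mut = (I_d, \aa^{\mu^1})\#m^2$ and split
\begin{equation*}
    V(x,\mu^1) - V(x,\mu^2)
    =
    \bigl(V(x,\mu^1) - V(x,\mut)\bigr)
    + \bigl(V(x,\mut) - V(x,\mu^2)\bigr).
\end{equation*}
The second term falls under \eqref{eq:Vmdiff} (same marginal $m^2$, controls $\aa^{\mu^1}$ and $\aa^{\mu^2}$), bounded by $\norminf{\aa^{\mu^1}-\aa^{\mu^2}}$ thanks to the uniform $L^{q_0}(m^2)$--$L^{\infty}$ comparison. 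For the first term, I will write numerators and denominators explicitly:
\begin{equation*}
    V(x,\mu^1) - V(x,\mut)
    =
    \frac{N_1}{Z_1} - \frac{N_2}{Z_2}
    =
    \frac{N_1 - N_2}{Z_1} + N_2\,\frac{Z_2 - Z_1}{Z_1 Z_2},
\end{equation*}
where $N_i = \int \aa^{\mu^1}(y) k(x,y)\, dm^i(y)$ and $Z_i = Z_{q_0}(x,\cdot)$ evaluated with marginal $m^i$. Both $|N_1 - N_2|$ and $|Z_1^{q_0'} - Z_2^{q_0'}|$ are controlled by $R \|k(x,\cdot)\|_\infty \|m^1 - m^2\|_\infty$ via triangle inequality and the assumptions $\norminf{\aa^{\mu^1}} \le R$, and elementary H\"older bounds give $|Z_1 - Z_2| \le C_R \|m^1-m^2\|_\infty$. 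The lower bound $m^i \geq R^{-1}$ ensures that $Z_1, Z_2 \geq c_R > 0$ (since $k$ has strictly positive integral under a strictly positive measure on $\TT^d$), so division by $Z_1 Z_2$ is harmless, and the bound $|N_2| \leq R \|k\|_\infty$ finishes the estimate. The only delicate points are the uniform positive lower bound on $Z_i$ and the elementary H\"older estimate for differences of $q_0'$-th roots of integrals, both of which depend on $R$ and on fixed quantities attached to $k$, hence absorbed into the constant $C_R$.
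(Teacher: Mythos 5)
Your proof is correct. It establishes \eqref{eq:Vbound} and \eqref{eq:Vmdiff} just as the paper does---by H\"older's inequality and the observation that $Z_{q_0}$ depends on $\mu$ only through its first marginal $m$. For \eqref{eq:Vmudiff} the decompositions differ slightly: you introduce the intermediate measure $\widetilde{\mu}=(I_d,\alpha^{\mu^1})\#m^2$ (the same device the paper uses in the proof of Lemma \ref{lem:distmu}), invoke \eqref{eq:Vmdiff} for the control-difference piece, and then estimate the remaining ratio $N_1/Z_1-N_2/Z_2$ by a further algebraic split; the paper instead telescopes in a single pass, changing first the control under $m^1$, then the density in the numerator, then the density in the denominator. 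Both routes need the same ingredients: the lower bound $Z_{q_0}(x,\mu^i)\geq R^{-1/q_0'}\bigl(\int_{\TT^d}k(0,y)^{q_0'}dy\bigr)^{1/q_0'}>0$, which relies on $m^i\geq R^{-1}$ and on translation invariance of the kernel (worth stating explicitly, since you appeal to it implicitly when asserting $Z_1,Z_2\geq c_R$ uniformly in $x$), together with the mean-value estimate controlling $|Z_1-Z_2|$ by $|Z_1^{q_0'}-Z_2^{q_0'}|$. Your version has the minor advantage of reusing \eqref{eq:Vmdiff} rather than re-deriving the analogous H\"older step inline.
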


\begin{proof}
    The function $V$ has at least the same regularity as $k$
    with respect to the state variable since $V$ is the convolution
    product of $k$ with a probability measure.
    Then \eqref{eq:Vbound} and \eqref{eq:Vmdiff} are straightforward
    using Hölder inequality.
    Let us take the same notation as in  \eqref{eq:Vmudiff},
    for $x\in\TT^d$ we get
    \begin{align*}
        \labs V\lp x,\mu^1\rp-V\lp x,\mu^2\rp\rabs
        &\!\begin{multlined}[t][10.5cm]
            \leq
            \frac1{Z_{q_0}\lp x,\mu^1\rp}
            \int_{\TT^d} k(x,y)\labs\aa^{\mu^1}(y)-\aa^{\mu^2}(y)\rabs dm^1(y)
            \\
            +\frac1{Z_{q_0}\lp x,\mu^1\rp}
            \int_{\TT^d} k(x,y)\labs\aa^{\mu^2}(y)\rabs \labs m^1(y)-m^2(y)\rabs dy
            \\
            +\labs\frac1{Z_{q_0}\lp x,\mu^1\rp}
            -\frac1{Z_{q_0}\lp x,\mu^2\rp}\rabs
            \int_{\TT^d} k(x,y)\labs\aa^{\mu^2}(y)\rabs dm^2(y)
        \end{multlined}
        \\
        &\!\begin{multlined}[t][10.5cm]
            \leq
            \norminf{\aa^{\mu^1}-\aa^{\mu^2}}
            +\frac1{Z_{q_0}\lp x,\mu^1\rp}
            \int_{\TT^d}k(x,y)dy
            \norminf{\aa^{\mu^2}}
            \norminf{m^1-m^2}
            \\
            +\frac1{Z_{q_0}\lp x,\mu^1\rp}
            \norm{\aa^{\mu^2}}{L^{q_0}\lp m^2\rp}\labs 
            Z_{q_0}\lp x,\mu^1\rp
            -Z_{q_0}\lp x,\mu^2\rp
            \rabs.
        \end{multlined}
    \end{align*}
    Moreover,
    we know that
    $Z_{q_0}\lp x,\mu^1\rp\geq R^{-\frac1{q_0'}}
    \lp\int_{\TT^d}k(0,y)^{q_0'}dy\rp^{\frac1{q_0'}}>0$
    where the right-hand side does not depend on $x$,
    and
    \begin{align*}
        \labs
        Z_{q_0}\lp x,\mu^1\rp
        -Z_{q_0}\lp x,\mu^2\rp
        \rabs
        &\leq
        \max_{i=1,2}
        \lp
        \frac1{q_0'}
        \lp Z_{q_0}\lp x,\mu^i\rp^{q_0'}\rp^{\frac1{q_0'}-1}\rp
        \labs
        Z_{q_0}\lp x,\mu^1\rp^{q_0'}
        -Z_{q_0}\lp x,\mu^2\rp^{q_0'}
        \rabs
        \\
        &\leq
        \frac1{q_0'}
        \lp\min_{i=1,2}{Z_{q_0}\lp x,\mu^i\rp}\rp^{1-q_0'}
        \int_{\TT^d}k(0,y)^{q_0'}dy\norminf{m^1-m^2}
        \\
        &\leq
        \frac1{q_0'}
        R^{\frac1{q_0}}
        \lp\int_{\TT^d}k(0,y)^{q_0'}dy\rp^{\frac1{q_0'}}
        \norminf{m^1-m^2}.
    \end{align*}
    The latter two chains of inequalities
    imply \eqref{eq:Vmudiff} with 
    $C_R=1+R^{1+\frac1{q_0'}}+\frac1{q_0'}R^2$.
\end{proof}

Here, we assume $\th\in(0,1)$.
Indeed,
$H$ admits an explicit form
when $\th=0$ or $\th=1$,
then checking
\ref{hypo:Hp}-\ref{hypo:mreg},
\ref{hypo:Hh}-\ref{hypo:Hpower},
\ref{hypo:Hinvert}-\ref{hypo:Hcontrac},
and \ref{hypo:Hlip}
is straightforward.

\begin{lemma}
    \label{lem:checkcrowd}
    Assumptions
    \ref{hypo:Hp},
    \ref{hypo:Hh}-\ref{hypo:Hpower},
    \ref{hypo:Hinvert},
    \ref{hypo:Hcontrac},
    and \ref{hypo:Hlip}
    are satisfied when $L$ is defined
    in \eqref{eq:defLcrowd}.
\end{lemma}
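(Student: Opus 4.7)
The plan is to write $H(x,p,\mu) = \Ht(p, V(x,\mu))$, where $\Ht(p,V)$ is the Legendre transform with respect to $\aa$ of the auxiliary Lagrangian $\Lt(\aa, V) = \frac{\th}{a'}\labs\aa - \llt V\rabs^{a'} + \frac{1-\th}{b'}\labs\aa\rabs^{b'}$, and $V(x,\mu)$ is the average drift introduced in paragraph \ref{subsec:crowd} and already used in the proof of Proposition \ref{prop:exicrowd}. All the assumptions then reduce to properties of $\Ht$ on $\RR^d\times\RR^d$, combined with the regularity of $V$ in $(x,\mu)$ provided by Lemma \ref{lem:GGinf}. Denote by $\aaa(p,V)$ the unique maximizer in the Legendre problem, so that $\Ht_p(p,V) = \aaa(p,V)$ and, by the envelope theorem, $\Ht_V(p,V) = \th\llt\labs\aaa - \llt V\rabs^{a'-2}(\aaa - \llt V)$. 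Strict convexity and coercivity of $\Lt$ in $\aa$ give that $\Ht$ is $C^1$ and that $\aaa$ is locally Hölder continuous in $p$.

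Assumption \ref{hypo:Hp} then follows from the $C^1$-regularity of $V(\cdot,\mu)$ (which inherits that of $k$) together with the weak-* continuity of $\mu\mapsto V(x,\mu)$ on $\cP_{\infty,R}\lp\TT^d\times\RR^d\rp$, a direct consequence of the continuity of $k$ and the uniform bound on supports. For \ref{hypo:Hh}, the asymptotic analysis of $\Ht(0,V)$ carried out in the proof of Proposition \ref{prop:exicrowd} yields $\labs\Ht(0,V)\rabs \leq C(1+|V|^{q'})$ with $q' = \max(a',b')$; combined with \eqref{eq:Vbound}, this gives $\labs H(x,0,\mu)\rabs \leq C_0 + \ll_2\LL_{q_0}(\mu)^{q'}$. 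For \ref{hypo:Hpower}, I would use the Fenchel identity $p\cdot\aaa - \Ht(p,V) = \Lt(\aaa,V)$ and Young-type inequalities on the optimality equation $p = \th\labs\aaa - \llt V\rabs^{a'-2}(\aaa - \llt V) + (1-\th)\labs\aaa\rabs^{b'-2}\aaa$ to derive $|p|^q \leq C(1 + \Lt(\aaa,V))$ (treating the two summands in $|p|$ separately and exploiting $q = \min(a,b)$), which yields \ref{hypo:Hpower} with $\ll_1 = 0$. For \ref{hypo:Hinvert}, I would test the optimality equation against $\aaa$ and combine this with Young's inequality and $|V|\leq\LL_{q_0}(\mu)$ from \eqref{eq:Vbound}; the explicit formula $\aaa(0,V) = s\llt V$ with $s\in(0,1)$ recalled in Proposition \ref{prop:exicrowd} shows that the resulting constant in front of $\LL_{q_0}(\mu)$ is $|\llt|$ times a factor in $(0,1)$, hence strictly less than $1$.

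For \ref{hypo:Hx}, the chain rule gives $H_x(x,p,\mu) = \Ht_V(p, V(x,\mu))\cdot V_x(x,\mu)$, and a direct computation from the definition of $V$ shows $|V_x(x,\mu)| \leq C(1+\LL_{q_0}(\mu))$ using the $C^1$-regularity of $k$. Combined with the bound $|\Ht_V(p,V)| \leq C(1 + |p|^{q-1} + |V|^{q'-1})$ obtained from the estimates on $\aaa$ derived for \ref{hypo:Hinvert} and from Young's inequality on $\labs\aaa - \llt V\rabs^{a'-1}|V|$, this yields \ref{hypo:Hx}. For \ref{hypo:Hcontrac}, when $\mu^1,\mu^2$ share the first marginal $m$, inequality \eqref{eq:Vmdiff} controls $\norminf{V(\cdot,\mu^1) - V(\cdot,\mu^2)}$ by $\norm{\aa^{\mu^1}-\aa^{\mu^2}}{L^{q_0}(m)}$, so it only remains to prove that $V\mapsto\aaa(p,V)$ is globally Lipschitz with constant strictly less than $1$; this is done by differentiating the optimality equation and exploiting the monotonicity of $\xi\mapsto|\xi|^{s-2}\xi$ together with $|\llt|<1$. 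Finally, \ref{hypo:Hlip} is obtained by combining \eqref{eq:Vmudiff} with the local Lipschitz continuity of $\Ht$ and $\Ht_p = \aaa$ with respect to $V$ on bounded sets.

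The main obstacle is the verification of the contraction constant $\ll_0 < 1$ appearing in both \ref{hypo:Hinvert} and \ref{hypo:Hcontrac}. Since the optimality equation defining $\aaa(p,V)$ has no closed form when $\th \in (0,1)$ and $a\neq b$, the factor $|\llt|<1$ must be carefully propagated through the joint monotonicity of the two nonlinear terms $\xi\mapsto|\xi|^{a'-2}\xi$ and $\xi\mapsto|\xi|^{b'-2}\xi$. I expect that, as in the proof of Proposition \ref{prop:exicrowd}, treating separately the regimes $a\leq b$ and $a>b$ and invoking the strong monotonicity of $\xi\mapsto|\xi|^{s-2}\xi$ along the relevant directions will produce the required constant.
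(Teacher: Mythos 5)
Your high-level strategy --- factor $H(x,p,\mu) = \Ht\lp p, V(x,\mu)\rp$, reduce to properties of $\Ht$ on $\RR^d\times\RR^d$ together with the regularity of $V$ from Lemma \ref{lem:GGinf}, and verify \ref{hypo:Hp}, \ref{hypo:Hh}, \ref{hypo:Hx}, \ref{hypo:Hpower}, \ref{hypo:Hinvert}, \ref{hypo:Hcontrac}, \ref{hypo:Hlip} one by one --- is exactly the paper's. The sticking point is, as you yourself suspected, the contraction constant in \ref{hypo:Hcontrac}, and your proposed resolution there does not go through.

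To obtain \ref{hypo:Hcontrac} with $\ll_0=|\llt|<1$ one must show $\|D_V\aaa(p,V)\|_{\mathrm{op}}\leq|\llt|$ uniformly, and differentiating the optimality equation gives $D_V\aaa = \llt\,(I+kB^{-1}C)^{-1}$ with $B = I+(a'-2)v_1v_1^T$, $C = I+(b'-2)v_2v_2^T$, where $v_1=(\aaa-\llt V)/|\aaa-\llt V|$, $v_2=\aaa/|\aaa|$ and $k>0$ is a scalar. Invoking ``strong monotonicity of $\xi\mapsto|\xi|^{s-2}\xi$ along the relevant directions'' cannot finish this argument in dimension $d\geq2$: $v_1$ and $v_2$ need not be aligned, so $B$ and $C$ do not commute, and for noncommuting positive semidefinite matrices $A,B$ the bound $\|(A+B)^{-1}A\|_{\mathrm{op}}\leq1$ is false in general (take $A=\diag(1,0)$ and $B$ the $2\times2$ matrix with all entries equal to $\ee>0$; then $(A+B)^{-1}A$ has operator norm $\sqrt2$). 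What rescues the estimate here is the very specific rank-one-perturbation-of-identity structure of $B$ and $C$, with eigenvalues $\{1,a'-1\}$ and $\{1,b'-1\}$; the paper's Lemma \ref{lem:H'4} is precisely the $2\times2$ spectral computation proving that $(I+kB^{-1}C)(I+kCB^{-1})$ has all eigenvalues $\geq1$, and it is an indispensable additional ingredient that your sketch does not supply and that monotonicity alone cannot replace.

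Two smaller imprecisions, neither fatal. For \ref{hypo:Hinvert}, the closed form $\aaa(0,V)=s\llt V$ with $s\in(0,1)$ holds only when $a=b$; for $a\neq b$ the asymptotics from the proof of Proposition \ref{prop:exicrowd} give the prefactor $|\llt|$ itself rather than $|\llt|$ times something strictly smaller, which is nevertheless $<1$, so your conclusion stands (the paper actually gets the constant directly from the fixed-point rearrangement \eqref{eq:FPaa1} rather than by inspection of the $p=0$ case). For \ref{hypo:Hx}, the bound $|\Ht_V(p,V)|\leq C(1+|p|^{q-1}+|V|^{q'-1})$ overstates the decay in $p$ when $q<2$: from $\Ht_V = \th\llt|\aaa-\llt V|^{a'-2}(\aaa-\llt V)$ and $|\aaa|\leq C(|p|^{q-1}+|V|)$ one gets $|\Ht_V|\leq C\lp|p|^{q-1}+|V|\rp^{a'-1}\leq C\lp1+|p|+|V|^{q'-1}\rp$, i.e.\ exponent $1$ on $|p|$; with this corrected bound, Young's inequality with exponents $(q,q')$ recovers \ref{hypo:Hx}.
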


\begin{proof}
We define $\Lt$,
$\Ht$ and $\aaa$ as in the proof
of \ref{prop:exicrowd}.

    \emph{Checking \ref{hypo:Hp}, \ref{hypo:Hh} and \ref{hypo:Hx}.}

The Legendre transform of a function is convex,
therefore $H$ is convex with respect to $p$.
Since $L$ is strictly convex,
$H$ is differentiable with respect to $p$.
Moreover, $\aaa=-H_p$
thus $H_p$ is continuous by the Maximum theorem.
Then $H(x,p,\mu)=p\cdot H_p\lp x,p,\mu\rp
-L\lp x,-H_p\lp x,p,\mu\rp,\mu\rp$,
so $H$ is continuous.
Finally, $H$ is differentiable with respect
to $x$ by the envelop theorem
and
\begin{equation}
    H_x\lp x,p,\mu\rp
    =
    -L_x\lp x,-H_p\lp x,p,\mu\rp,\mu\rp,
\end{equation}
for $(x,p,\mu)\in\TT^d\times\RR^d\times\cP\lp\TT^d\times\RR^d\rp$.

Using the growth properties of $L$,
we can prove that there exists $C_0>0$
such that
    \begin{align}
        \label{eq:Hpboundcrowd}
        \labs H_p\lp t,x,p,\mu\rp\rabs
        &\leq
        C_0\lp 1+|p|^{q-1}
        +\LL_{q'}\lp\mu\rp\rp,
        \\
        \label{eq:Hboundcrowd}
        \labs H\lp t,x,p,\mu\rp\rabs
        &\leq
        C_0\lp 1+|p|^{q}
        +\LL_{q'}\lp\mu\rp^{q'}\rp,
        \\
        \label{eq:Hxboundcrowd}
        \labs H_x\lp t,x,p,\mu\rp\rabs
        &\leq
        C_0\lp 1+|p|^{q}
        +\LL_{q'}\lp\mu\rp^{q'}\rp,
    \end{align}
for any $\lp x,p,\mu\rp\in\TT^d\times\RR^d
\times\cP\lp\RR^d\times\RR^d\rp$.
We refer to \cite{MonoMFGC} Lemma $2.5$
for a complete proof.

One may prove that the function
$h:z\in\RR^d\mapsto|z|^{a'}\in\RR$ satisfies
$h(z)-h(y)-\nabla h(y)\cdot(y-x)\geq C_R^{-1}|y-z|^{\max(a',2)}$
for $y,z\in\RR^d$ such that
$|y|\leq R$,
$|z|\leq R$,
where $C_R>0$ is a constant.
This implies that
for $R>0$ there exists
$C_R>0$ a constant such that
$L$ satisfies
\begin{equation*}
    L\lp x,\aa^2,\mu\rp
    -L\lp x,\aa^1,\mu\rp
    -\lp\aa^2-\aa^1\rp
    \cdot L_{\aa}\lp x,\aa^1,\mu\rp
    \geq
    C_R^{-1}\labs \aa^2-\aa^1\rabs^{\max\lp q',2\rp},
\end{equation*}
for $\lp\aa^1,\aa^2,\mu\rp\in\RR^d\times\RR^d\times\cP_{\infty}\lp\TT^d\times\RR^d\rp$,
such that $\labs\aa^i\rabs\leq R$ and $\LL_{q_0}\lp\mu\rp\leq R$.
This implies
\begin{equation*}
    \lp\aa^2-\aa^1\rp
    \cdot\lp L_{\aa}\lp x,\aa^2,\mu\rp
    -L_{\aa}\lp x,\aa^1,\mu\rp\rp
    \geq
    2C_R^{-1}\labs \aa^2-\aa^1\rabs^{\max\lp q',2\rp}.
\end{equation*}
Take $p^i\in\RR^d$
and $\aa^i=-H_p\lp x,p^i,\mu\rp$,
$i=1,2$.
Recalling the conjugacy relation
$p^i=-L_{\aa}\lp x,\aa^i,\mu\rp$
we obtain
that $H_p$ is locally Hölder continuous
with respect to $p$.

\emph{Checking \ref{hypo:Hpower}.}

Take $(p,V)\in\RR^{2d}$
    and $\aaa=\aaa(p,V)$,
    the optimal control $\aaa$ satisfies
    \begin{equation}
        \label{eq:aadef}
        p
        =
        -D_{\aa}\Lt(\aaa,V)
        =
        -\th|\aaa-\llt V|^{a'-2}(\aaa-\llt V)
        -(1-\th)|\aaa|^{b'-2}\aaa,
    \end{equation}
If $(p,V)\neq (0,0)$,
    this implies
    \begin{equation}
        \label{eq:FPaa1}
        \aaa
        =
        \frac{-p+\llt \th|\aaa-\llt V|^{a'-2}V}
        {\th|\aaa-\llt V|^{a'-2}+(1-\th)|\aaa|^{b'-2}},
    \end{equation}
    and
    \begin{equation}
        \label{eq:FPaa2}
        \aaa-\llt V
        =
        \frac{-p+\llt (1-\th)|\aaa|^{b'-2}V}
        {\th|\aaa-\llt V|^{a'-2}+(1-\th)|\aaa|^{b'-2}}.
    \end{equation}
    From \eqref{eq:aadef}, we deduce that
    \begin{equation*}
        \th|\aaa-\llt V|^{a'-1}
        \geq
        \frac12|p|,
        \text{ or }
        (1-\th)|\aaa|^{b'-1}
        \geq
        \frac12|p|.
    \end{equation*}
    We recall that $\aaa=-H_p(p,V)$,
    hence
    \begin{align*}
        \Ht_p(p,V)\cdot p
        -\Ht(p,V)
        &=
        \Lt(\aaa,V) \\
        &=
        \frac{\th}{a'}|\aaa-\llt V|^{a'}
        + \frac{1-\th}{b'}|\aaa|^{b'}, \\
        &\geq
        \min
        \lp
        \frac{|p|^{a}}{2^aa'\th^{a-1}},
        \frac{|p|^{b}}{2^bb'(1-\th)^{b-1}}
        \rp,
    \end{align*}
    which implies \ref{hypo:Hpower}.

    \emph{Proof that $\aaa$ is differentiable
    with respect to $V$ at $(0,0)$.}

    Take $ V\in\RR^d$ that
    will eventually tend to $0$
    and $\aaa=\aaa(0, V)$.
    From \eqref{eq:aadef}
    we obtain
    \begin{equation*}
        0
        =
        \th\labs \aaa-\llt V\rabs^{a'-2}
        (\aaa-\llt V)
        +(1-\th)\labs \aaa\rabs^{b'-2}\aaa,
    \end{equation*}
    Let us recall inequalities \eqref{eq:estaa}
    and \eqref{eq:estaa2}.
    \begin{itemize}
        \item if $a>b$
            then $\frac{(a'-2)(b'-1)}{a'-1}<b'-2$,
            and we obtain the following expansion
            as $|V|$ tends to $0$,
            \begin{equation*}
                \aaa
                =
                \llt V
                +o(|V|).
            \end{equation*}

        \item if $a=b$
            we obtain,
            \begin{equation}
                \label{eq:esta=b}
                \aaa
                =
                \llt
                \frac{ \th^{a-1}}
                {\th^{a-1}+\lp1-\th\rp^{a-1}}
                V.
            \end{equation}

        \item if $a<b$
            then $\frac{(a'-2)(b'-1)}{a'-1}>b'-2$,
            and we obtain the following estimate
            as $|V|$ tends to $0$,
            \begin{equation*}
                \aaa
                =
                o(|V|).
            \end{equation*}
            Therefore the derivatives
            of $\aaa$ with respect to $V$
            in any of the above three cases
            are:
            \begin{equation}
                \label{eq:diffaa0}
                D_V \aaa(0,0)
                =
                \lc
                \begin{aligned}
                    &\llt I_d
                    &\text{ if } b<a \\
                    &\llt
                    \frac{ \th^{a-1}}
                    {\th^{a-1}+\lp1-\th\rp^{a-1}}
                    I_d
                    &\text{ if } b=a \\
                    &0 
                    &\text{ if } b>a. 
                \end{aligned}
                \right.
            \end{equation}
    \end{itemize}
    
    \emph{Proof that the operator norm of $D_V\aaa
    =\lp\partial_{V^j}\aaa^i\rp_{1\leq i,j\leq d}
    \in \RR^{d\times d}$
    is not larger than $\ll$.}

    Here,
    the norm of a square matrix $A\in\RR^{d\times d}$
    is defined by $\left\| A\right\|
        =
        \sup_{X\neq 0}\frac{\labs AX\rabs}{\labs X\rabs}$.
    Let us introduce
    \begin{align*}
        v_1
        &=
        \mathbf{1}_{\aaa-\llt V\neq0}
        \frac{\aaa-\llt V}
        {\labs \aaa-\llt V\rabs},
        \quad
        \quad
        &B
        =
        I_d+(a'-2)v_1v_1^T,
        \\
        v_2
        &=
        \mathbf{1}_{\aaa\neq0}
        \frac{\aaa}{|\aaa|},
        \quad
        \quad
        &C
        =
        I_d+(b'-2)v_2v_2^T.
    \end{align*}
    We recall that if $v_i\neq0$,
    then $v_iv_i^T$ is the
    orthogonal projection
    onto $\RR v_i$
    for $i=1,2$.
    
    If $\aaa=\llt V = 0$
    then $(p,V)=(0,0)$,
    we see on \eqref{eq:diffaa0} that
    $D_V\aaa$ is a positive semi-definite matrix 
    with eigenvalues in  $[-\ll,\ll]$.
    Therefore, we can now assume that
    $(\aaa,V)\neq(0,0)$.

    Let us assume temporarily that
        $a'\neq2, b'\neq 2,
        \aaa-\llt V\neq 0, \aaa\neq 0$.
        Then we differentiate the $i$-th
        component of \eqref{eq:aadef}
        with respect to $V^j$,
    \begin{multline*}
        0=\th\labs \aaa-\llt V \rabs^{a'-2}
        \lp \partial_{V^j}\aaa^i-\llt \dd_{i,j}\rp \\
        +\th(a'-2)\labs \aaa-\llt V \rabs^{a'-4} 
        \sum_{k=1}^d
        \lp \partial_{V^j}\aaa^k-\llt \dd_{k,j}\rp
        \lp \aaa^i-\llt V^i\rp
        \lp \aaa^k-\llt V^k\rp \\
        +(1-\th)|\aaa|^{b'-2}
        \partial_{V^j}\aa^i
        +(1-\th)(b'-2)|\aaa|^{b'-4}
        \sum_{k=1}^d
        \partial_{V^j}\aaa^k
        \aaa^i\aaa^k.
    \end{multline*}
    This implies
    \begin{equation*}
        0
        =
        \th\labs \aaa-\llt V\rabs^{a'-2}
        B\lp D_V\aaa-\llt I_d\rp
        +(1-\th)|\aa|^{b'-2}
        CD_V\aaa,
    \end{equation*}
    and thus
    \begin{equation}
        \label{eq:DVaaa}
        D_V\aaa
        =
        \llt\lb
        I_d
        +\frac{(1-\th)|\aaa|^{b'-2}}
        {\th\labs\aaa-\llt V\rabs^{a'-2}}
        B^{-1}C\rb^{-1}.
    \end{equation}
    We can check that this last equation
    holds in the general case
    for any $(\aaa,V)\neq(0,0),a',b'$.
    
    \begin{itemize}
    \item
        If $(a'-2)v_1=0$ (i.e. $B=I_d$)
        or $(b'-2)v_2=0$ (i.e. $C=I_d$),
        then \eqref{eq:DVaaa} yields that
        $D_V\aaa$ is a positive definite matrix
        with eigenvalues in $(-\ll,\ll)$.

    \item
        If $(a'-2)v_1\neq 0$ ,
        $(b'-2)v_2\neq 0$
        and $v_1,v_2$ are aligned,
        Then $B$ and $C$ commute
        and $B^{-1}C$ is a positive definite matrix.
        Then \eqref{eq:DVaaa} yields that
        $D_V\aaa$ is a positive definite matrix
        with eigenvalues in $(-\ll,\ll)$.

    \item
        The last case consists of assuming
        that $(a'-2)v_1\neq 0$ ,
        $(b'-2)v_2\neq 0$,
        and $v_1,v_2$ are linearly independent.
        We define $k$ by
        $k=\frac{(1-\th)|\aaa|^{b'-2}}
        {\th\labs\aaa-\llt V\rabs^{a'-2}}>0$.
        The two orthogonal subspaces
        ${\rm Span}(v_1,v_2)$ and $\{v_1,v_2\}^{\bot}$
        are stable by $D_V\aaa, B,C$.
        The restriction of $D_V\aaa$
        to $\{v_1,v_2\}^{\bot}$ is
        positive definite
        with eigenvalues in $(-\ll,\ll)$.

        Let us denote by $\At,\Bt,\Ct\in \cM_{2\times2}(\RR)$
        respectively the restriction of $D_V\aaa, B$
        and $C$ to ${\rm Span}(v_1,v_2)$.
        We notice that
        \begin{equation*}
            \Bt^{-1}
            =
            I_d+\lp(a'-1)^{-1}-1\rp v_1v_1^{\bot},
        \end{equation*}
        thus the eigenvalues of $\Bt^{-1}$
        are $1$ and $(a'-1)^{-1}\leq 1$
        since $a'\geq 2$.
        The eigenvalues of $\Ct$
        are $1$ and $(b'-1)\geq1$.
        Lemma \ref{lem:H'4} below yields that
        $M=(I_d+k\Bt^{-1}\Ct)(I_d+k\Ct\Bt^{-1})$
        is a positive definite matrix
        with eigenvalues not smaller
        than $1$.
        This implies
        \begin{align*}
            \|\At X\|^2
            &=
            \ll^2\left< M^{-1}X,X\right>\\
            &\leq
            \ll^2\|X\|^2.
        \end{align*}
    This concludes the proof that the norm of
    $D_V\aaa$ is not larger than $\ll$.
\end{itemize}

    \emph{Proof of \ref{hypo:Hcontrac}.}
    
    Take $(p,V^1,V^2)\in \RR^{3d}$
    and $\aaa^i=-\Ht_p\lp p,V^i\rp, i=1,2$,
    then
    \begin{align*}
        \labs
        \Ht_p\lp p,V_1\rp
        -\Ht_p\lp p,V_2\rp
        \rabs
        &\leq
        \sup_{s\in[0,1]}
        \lc
        \left\| D_V\aaa(p,sV_1+(1-s)V_2)\right\|
        \rc
        \labs V^1-V^2\rabs \\
        &\leq
        \ll
        \labs V^1-V^2\rabs.
    \end{align*}
    Combining the latter inequality 
    and \eqref{eq:Vmdiff},
    we conclude that \ref{hypo:Hcontrac} is satisfied.

    \emph{Proof of \ref{hypo:Hinvert}.}

    Let $(p,V)\in\RR^{2d}$,
    we take $\aaa=-H_p\lp p,V \rp$. 

    \begin{itemize}
        \item
            We suppose $b'\geq a'$,
            we make out two cases:
            the first case is when 
            $|\aaa|\leq |p|^{b-1}$;
            the second case is when 
            $|\aaa|> |p|^{b-1}=|p|^{\frac1{b'-1}}$
            which implies
            \begin{align*}
                |\aaa|
                &\leq
                \labs
                \frac{-p+\llt \th|\aaa-\llt V|^{a'-2}V}
                {\th|\aaa-\llt V|^{a'-2}+(1-\th)|\aaa|^{b'-2}}
                \rabs \\
                &\leq
                \frac{|p|}{(1-\th)|\aaa|^{b'-2}}
                +\ll|V| \\
                &\leq
                (1-\th)^{-1}|p|^{1-\frac{b'-2}{b'-1}}
                +\ll|V|,
            \end{align*}
            using \eqref{eq:FPaa1}.
            We recall that
            $1-\frac{b'-2}{b'-1}=b-1$,
            hence
            \begin{equation}
                \label{eq:H'invert_aux1}
                \labs \Ht_p\lp p,V\rp\rabs
                =
                |\aaa|
                \leq
                (1-\th)^{-1}|p|^{b-1}
                +\ll|V|. 
            \end{equation}

        \item
            We suppose that $b'<a'$,
            we make out two cases:
            the first case is when 
            $|\aaa-\llt V|\leq |p|^{a-1}$;
            the second case is when
            $|\aaa-\llt V|> |p|^{\frac1{a'-1}}$
            which implies
            \begin{align*}
                |\aaa|
                &\leq
                \labs
                \frac{-p+\llt \th|\aaa-\llt V|^{a'-2}V}
                {\th|\aaa-\llt V|^{a'-2}+(1-\th)|\aaa|^{b'-2}}
                \rabs \\
                &\leq
                \frac{|p|}
                {\th|\aaa-\llt V|^{a'-2}}
                +\ll|V| \\
                &\leq
                \th^{-1}|p|^{1-\frac{a'-2}{a'-1}}
                +\ll|V|, 
            \end{align*}
            where we used \eqref{eq:FPaa1}.
            From the equality
            $1-\frac{a'-2}{a'-1}=a-1$,
            we deduce
            \begin{equation}
                \label{eq:H'invert_aux2}
                \labs H_p\lp p,V\rp\rabs
                =
                |\aaa|
                \leq
                \th^{-1}|p|^{a-1}
                +\ll|V|. 
            \end{equation}
    \end{itemize}

    This concludes the proof of 
    \ref{hypo:Hinvert}.

    \emph{Proof of \ref{hypo:Hlip}.}

    We proved above that $\aaa$ is locally Lipschitz continuous
    with respect to $V$ and we recall that $\Lt$ is $C^1$.
    Therefore $\Ht$ is also locally Lipschitz with respect to $V$.
    This and \eqref{eq:Vmudiff} implies that \ref{hypo:Hlip} holds.
\end{proof}

    \begin{lemma}
        \label{lem:H'4}
        Let $B,C\in \cM_{2\times2}\lp\RR\rp$
        be two positive definite matrices
        with eigenvalues $(1,r)$ and $(1,s)$
        respectively, and $0<r\leq1, s\geq1$.
        Then for any $k>0$ the matrix $M$
        defined by
        \begin{equation*}
            M
            =
            I_d
            +k(BC+CB)
            +k^2BC^2B,
        \end{equation*}
        is positive definite
        with eigenvalues not smaller than $1$.
    \end{lemma}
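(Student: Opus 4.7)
The natural starting point is to factor $M$. Since $B$ and $C$ are symmetric one has $(BC)^T = CB$, so setting $N := I_d + kBC$ gives $N^T = I_d + kCB$ and
\[
NN^T = (I_d + kBC)(I_d + kCB) = I_d + k(BC + CB) + k^2\, BC^2B = M.
\]
This immediately shows $M \succeq 0$ and $\det M = (\det N)^2$. Applying Cayley--Hamilton to $BC$ --- which is similar to the symmetric positive-definite matrix $B^{1/2}CB^{1/2}$ and therefore has real positive eigenvalues with product $rs$ --- one obtains $\det N = 1 + k\tr(BC) + k^2 rs > 0$, and so $M$ is in fact positive definite.

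Because $M$ is $2\times 2$, the lower bound $\lambda_{\min}(M) \geq 1$ is equivalent to the pair of conditions $\tr(M) \geq 2$ and $p(1) \geq 0$, where $p(\lambda) := \lambda^2 - \tr(M)\lambda + \det(M)$ is the characteristic polynomial. The trace inequality is immediate from $\tr(M) = 2 + 2k\tr(BC) + k^2 \tr(B^2 C^2) \geq 2$. Writing $T := \tr(BC)$, $D := \det(BC) = rs$, $S := \tr(B^2 C^2)$, a direct expansion of $\det M = (1 + kT + k^2 D)^2$ yields
\[
p(1) = k^2\Bigl[(T + kD)^2 - (S - 2D)\Bigr].
\]
Noting that $S = \|BC\|_F^2 = \sigma_1(BC)^2 + \sigma_2(BC)^2$ and $D = \sigma_1(BC)\,\sigma_2(BC)$, one identifies $S - 2D = (\sigma_1(BC) - \sigma_2(BC))^2$, so the problem is reduced to the scalar inequality
\[
T + kD \;\geq\; \sigma_1(BC) - \sigma_2(BC).
\]

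The heart of the proof is then to establish this last inequality using the assumption $r \leq 1 \leq s$. Without loss of generality I would diagonalize $B = \diag(r, 1)$ and parametrize $C = V \diag(1, s) V^T$ for some rotation $V = V(\theta)$; both sides become explicit expressions in $(r, s, \theta)$. The main obstacle is that $\sigma_1(BC) - \sigma_2(BC)$ measures the non-normality of $BC$ and can be substantially larger than $T = \lambda_1(BC) + \lambda_2(BC)$ when the eigenframes of $B$ and $C$ are strongly misaligned, so the term $kD$ together with the constraint $r \leq 1 \leq s$ must combine to compensate. My plan is to seek a sum-of-squares rewriting of $(T + kD)^2 - (S - 2D)$ exhibiting it as a nonnegative polynomial in $(1-r)$, $(s-1)$ and the trigonometric quantities attached to $\theta$, possibly after a case analysis on $\theta$. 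I expect the identification of the correct decomposition --- making essential use of the sign constraint $r \leq 1 \leq s$ to control the misalignment of the eigenframes --- to be the most delicate step, and it is where the $2\times 2$ structure together with $D = rs$ should be exploited crucially.
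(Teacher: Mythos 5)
Your factorization $M = NN^{T}$ with $N = I_{d} + kBC$ is tidier than the paper's direct matrix computation, and the reduction of $p(1)\geq 0$ to the scalar inequality $T + kD \geq \sigma_{1}(BC) - \sigma_{2}(BC)$ is correct. But the step you defer is not merely delicate: the inequality is false, and so is the lemma as stated. Letting $k\to 0^{+}$, your condition becomes $T \geq \sigma_{1}-\sigma_{2}$; writing out the entries of $BC$ shows $T^{2} - (\sigma_{1}-\sigma_{2})^{2} = \det(BC+CB)$, and since $\tr(BC+CB)=2T>0$, the requirement is exactly that $BC+CB$ be positive semi-definite. For non-commuting positive definite $B$, $C$ the symmetrized product $BC+CB$ can fail to be positive semi-definite, and the sign pattern $0<r\leq 1\leq s$ does not prevent this: with the eigenframes of $B$ and $C$ at angle $\pi/4$ one has $T=\tfrac{(1+r)(1+s)}{2}$ and $(\sigma_{1}-\sigma_{2})^{2}=\tfrac{(1+r^{2})(1+s^{2})}{2}-2rs$, so for $r=0.1$, $s=100$ one gets $T=55.55$ but $\sigma_{1}-\sigma_{2}\approx 70.9$; already at $k=0.01$ one computes $M\approx\left(\begin{smallmatrix}2.51 & -0.59\\ -0.59 & 1.11\end{smallmatrix}\right)$, whose smallest eigenvalue is $\approx 0.89<1$.

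The paper's own argument contains an algebraic slip that masks this. It writes $\det\widetilde M - \tr\widetilde M + 1 = f(\cos^{2}\chi)$ with $f$ a convex quadratic in $x=\cos^{2}\chi$ and computes the vertex $x_{\min}$ as if only $-\tr\widetilde M$ contributed to the coefficient of $x$; but $\det\widetilde M=\bigl(\alpha x+\beta(1-x)\bigr)^{2}$, with $\alpha=(1+k)(1+krs)$ and $\beta=(1+kr)(1+ks)$, also contributes $2\beta(\alpha-\beta)x$, so the numerator of $x_{\min}$ should carry an extra summand $2\beta(\beta-\alpha)$. That term is positive whenever $\beta>\alpha$ (e.g.\ $r$ small, $s$ large) and pushes $x_{\min}$ into $(0,1)$: in the example above $x_{\min}\approx 0.50$ and $f(0.50)\approx -0.19<0$, so the claimed bound $f\geq f(0)\geq 0$ on $[0,1]$ does not hold. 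Your route, pushed to its end, is thus the cleanest way to see that Lemma \ref{lem:H'4} fails in the generality stated; the application to the crowd-motion model in paragraph \ref{subsec:crowd} would need either an additional hypothesis tying the eigenframes of $B$ and $C$ together, or a weaker bound on $\|D_{V}\boldsymbol{\alpha}\|$.
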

    \begin{proof}
        We can assume that $B,C$
        have the following form:
        \begin{equation*}
            C
            =
            \begin{pmatrix}
                1 & 0 \\
                0 & s
            \end{pmatrix},
            \quad
            B
            =
            U
            \begin{pmatrix}
                1 & 0 \\
                0 & r
            \end{pmatrix}
            U^T,
            \text{ with }
            U\in \mathcal{O}_2\lp\RR\rp,
        \end{equation*}
        since the eigenvalues of $M$
        are invariant by taking the conjugate
        of $B$ and $C$ by the
        same orthogonal matrix.
        The same argument and noticing
        that $C$ commutes with
        $\begin{pmatrix}
            1 & 0 \\
            0 & -1
        \end{pmatrix}$,
        imply that
        we can assume that $U$
        admits a positive determinant,
        and thus we can write it as
        \begin{equation*}
            U=
            \begin{pmatrix}
                \cos \chi & \sin \chi \\
                -\sin \chi & \cos \chi
            \end{pmatrix},
        \end{equation*}
        with $\chi\in[0,2\pi)$.
        In this case, $M$ is given by
        \begin{align*}
            M
            &=
            I_d
            +k(BC+CB)
            +k^2BC^2B\\
            &\sim
            I_d
            +
            kU^T
            \begin{pmatrix}
                1 & 0 \\
                0 & s
            \end{pmatrix}
            U
            \begin{pmatrix}
                1 & 0 \\
                0 & r
            \end{pmatrix}
            +k
            \begin{pmatrix}
                1 & 0 \\
                0 & r
            \end{pmatrix}
            U^T
            \begin{pmatrix}
                1 & 0 \\
                0 & s
            \end{pmatrix}
            U
            +
            k^2
            \begin{pmatrix}
                1 & 0 \\
                0 & r
            \end{pmatrix}
            U^T
            \begin{pmatrix}
                1 & 0 \\
                0 & s^2
            \end{pmatrix}
            U
            \begin{pmatrix}
                1 & 0 \\
                0 & r
            \end{pmatrix}.
        \end{align*}
        We name $\Mt$ the matrix
        in the last line of the latter calculation,
        $M$ and $\Mt$ have the same eigenvalues.
        Let us compute $\Mt$
        \begin{equation*}
            \Mt
            =
            \begin{pmatrix}
                \cos^2\chi(1+k)^2+\sin^2\chi(1+ks)^2
                & -k(s-1)\lb 1+r+kr(1+s)\rb\cos\chi \sin\chi \\
                 -k(s-1)\lb 1+r+kr(1+s)\rb\cos\chi \sin\chi
                &\cos^2\chi(1+krs)^2+\sin^2\chi(1+kr)^2
            \end{pmatrix},
        \end{equation*}
        its trace is given by
        \begin{equation*}
            \text{tr}(\Mt)
            =
            \cos^2\chi(1+k)^2+\sin^2\chi(1+kr)^2
            +\cos^2\chi(1+krs)^2+\sin^2\chi(1+ks)^2,
        \end{equation*}
        and its determinant by
        \begin{align*}
            \det(\Mt)
            &=
            \!\begin{multlined}[t][10.5cm]
                (1+k)^2(1+krs)^2\cos^4\chi
                +(1+kr)^2(1+ks)^2\sin^4\chi \\
                +2(1+k)(1+kr)(1+ks)(1+krs)\cos^2\chi\sin^2\chi
            \end{multlined}
            \\
            &=
            \lb (1+k)(1+krs)\cos^2\chi
            +(1+kr)(1+ks)\sin^2\chi\rb^2.
        \end{align*}
        The eigenvalues of $\Mt$ are the roots of
        the following second-order polynomial function,
        \begin{equation*}
            X^2-\text{tr}(\Mt)X+\det(\Mt),
        \end{equation*}
        its smallest root is
        \begin{equation*}
            \frac12\lp \text{tr}(\Mt)
            -\sqrt{\text{tr}^2(\Mt)-4\det(\Mt)}\rp,
        \end{equation*}
        which is not smaller than $1$
        if and only if
        \begin{equation*}
            \text{tr}^2(\Mt)-4\det(\Mt)
            \leq
            \lp\text{tr}(\Mt)-2\rp^2.
        \end{equation*}
        Therefore, it is sufficient to check that tr$(\Mt)\leq\det(\Mt)+1$
        to conclude.
        We define the function $f:\RR\rightarrow\RR$ by
        \begin{multline*}
            f(x)
            =
            (1+k)^2(1+krs)^2x^2
            +(1+kr)^2(1+ks)^2(1-x)^2 \\
            +2(1+k)(1+kr)(1+ks)(1+krs)x(1-x)
            \cos^2\chi(1+k)^2 \\
            +\sin^2\chi(1+kr)^2
            +\cos^2\chi(1+krs)^2+\sin^2\chi(1+ks)^2+1.
        \end{multline*}
        This is a second-order polynomial in $x$
        with
        \begin{align*}
            f(0)
            &=
            \lp (1+kr)^2-1\rp
            \lp (1+ks)^2-1\rp
            \geq 0 \\
            f(1)
            &=
            \lp (1+k)^2-1\rp
            \lp (1+krs)^2-1\rp
            \geq 0, \\
            f''(x)
            &=
            2\lb (1+k)(1+krs)-(1+kr)(1+ks)\rb^2.
        \end{align*}
        If $(1+k)(1+krs)-(1+kr)(1+ks)=0$, then
        $f$ is linear and thus $f(x)\geq 0$ for all $x\in[0,1]$.

        If $(1+k)(1+krs)-(1+kr)(1+ks)\neq0$, then
        the minimum of this polynomial function on $\RR$
        is obtained at $x_{\min}$ defined as
        \begin{align*}
            x_{\min}
            &=
            \frac{(1+k)^2+(1+krs)^2-(1+ks)^2-(1-kr)^2}
            {2\lb (1+k)(1+krs)-(1+kr)(1+ks)\rb^2} \\
            &=
            \frac{(1-r^2)(1-s^2)k^2+2(1-r)(1-s)k}
            {2\lb (1+k)(1+krs)-(1+kr)(1+ks)\rb^2}
            \leq 0,\\
        \end{align*}
        since $0<r\leq 1, s\geq 1$ and $k>0$.
        Thus $f$ has no local minimum on $[0,1]$,
        then $f(x)\geq0$ for all $x\in[0,1]$
        since $f(0)\geq0$ and $f(1)\geq0$.

        Since $\det(\Mt)-\text{tr}(\Mt)+1=f(\cos^2\chi)\geq 0$,
        this concludes the proof of the lemma.
    \end{proof}

\end{appendix}

\end{document}